  

 \documentclass[11pt,twoside,reqno,centertags,draft]{amsart}

 \PII{} 
\copyrightinfo{}{}

\setcounter{page}{1}



\thanks{AMS Subject Classifications:  35B65, 35J60, 35K55, 35R11}

 \usepackage{amsmath,amsthm,amsfonts,amssymb}
\usepackage[mathscr]{euscript}
 \usepackage{hyperref} 
 \usepackage{mathrsfs} 
\usepackage{graphicx}
\usepackage{color}
  \usepackage{epsfig}

 \pagestyle{myheadings}
\thispagestyle{empty}
    \textwidth = 5 true in
   \textheight = 7   true in

 \voffset= -20 true pt
 \oddsidemargin=0true in
 \evensidemargin=0true in


\newcommand{\tl}{\tilde}

\newcommand{\e}{\epsilon}
\newcommand{\vep}{\varepsilon}
\newcommand{\vrh}{\varrho}
\newcommand{\pa} {\partial}
\newcommand{\el} {\ell}
\newcommand{\al} {\alpha}
\newcommand{\ba} {\beta}
\newcommand{\de} {\delta}
\newcommand{\ga} {\gamma}
\newcommand{\Ga} {\Gamma}
\newcommand{\Om} {\Omega}
\newcommand{\sg}{\sigma}

\newcommand{\De} {\Delta}
\newcommand{\la} {\lambda}

\newcommand{\ka}{\kappa}

\newcommand{\na} {\nabla}

\newcommand{\mb} {\mathbb}
\newcommand{\mc} {\mathcal}
\newcommand{\mf} {\mathfrak}

%
%
%
%

\newtheorem{Theorem}{Theorem}[section]
\newtheorem{Lemma}[Theorem]{Lemma}
\newtheorem{Proposition}[Theorem]{Proposition}
\newtheorem{Corollary}[Theorem]{Corollary}
\newtheorem{Remark}{Remark}
\newtheorem{Definition}{Definition}[section]

\def\Xint#1{\mathchoice
	{\XXint\displaystyle\textstyle{#1}}%
	{\XXint\textstyle\scriptstyle{#1}}%
	{\XXint\scriptstyle\scriptscriptstyle{#1}}%
	{\XXint\scriptscriptstyle\scriptscriptstyle{#1}}%
	\!\int}
\def\XXint#1#2#3{{\setbox0=\hbox{$#1{#2#3}{\int}$ }
		\vcenter{\hbox{$#2#3$ }}\kern-.6\wd0}}


\begin{document}
 
\title{ H\"older regularity results for 
parabolic nonlocal double phase problems }
\thanks{Accepted for publication: September 2023.} 
\date{}
\maketitle     
 
\vspace{ -1\baselineskip}

{\small
\begin{center}
 {\sc Jacques Giacomoni} \\
{{Universit\'e  de Pau et des Pays de l'Adour, LMAP (UMR E2S-UPPA CNRS 5142) }\\  Bat. IPRA, Avenue de l'Universit\'e F-64013 Pau, France }\\[10pt]
 {\sc Deepak Kumar} \\
{Research Institute of Mathematics, Seoul National University,}\\
{Seoul 08826, South Korea} \\[10pt]
 {\sc K. Sreenadh} \\
{Department of Mathematics, Indian Institute of Technology Delhi,}\\
{Hauz Khaz, New Delhi-110016, India } \\[10pt]
 (Submitted by: Giuseppe Mingione)  
\end{center}
}

\numberwithin{equation}{section}
\allowdisplaybreaks

 \smallskip

 \begin{quote}
\footnotesize
{\bf Abstract.}  
 In this article, we obtain higher H\"older regularity results for weak solutions to nonlocal problems driven by the fractional double phase operator
\begin{align*}
	\mc L u(x):=&2 \; {\rm P.V.} \int_{\mathbb R^N} \frac{|u(x)-u(y)|^{p-2}(u(x)-u(y))}{|x-y|^{N+ps_1}}dy \nonumber \\
	&+2 \; {\rm P.V.} \int_{\mathbb R^N} a(x,y) \frac{|u(x)-u(y)|^{q-2}(u(x)-u(y))}{|x-y|^{N+qs_2}}dy,
\end{align*}
where $1<p\leq q<\infty$,  $0<s_2, s_1<1$ and the modulating coefficient $a(\cdot,\cdot)$ is a non-negative bounded function. Specifically, we  prove higher space-time H\"older continuity result for weak solutions of time depending nonlocal double phase problems for a particular subclass of the modulating coefficients.  Using suitable approximation arguments, we further establish higher (global) H\"older continuity results for weak solutions to the stationary problems involving the operator $\mc L$ with modulating coefficients that are locally continuous. 
\end{quote}

\section{Introduction}
 In this article, our aim is to obtain space-time H\"older continuity results for the local weak solutions to the following nonlocal parabolic problem: 
\begin{equation*}
	\pa_t u+ \mc L u   = f \; \; \text{ in } \Om\times I, \tag{${\mc P}_t$}\label{probM}
\end{equation*}
where $\Om\subset\mb R^N$ ($N\geq 2$) is a bounded domain, $I:=(t_0,t_1]\subset\mb R$, $f\in L^\infty_{\mathrm{loc}}(\Om\times I)$ and the nonlocal operator $\mc L$ is defined as 
\begin{align*}
	\mc L u(x):=&2 \; {\rm P.V.} \int_{\mathbb R^N} \frac{|u(x)-u(y)|^{p-2}(u(x)-u(y))}{|x-y|^{N+ps_1}}dy \nonumber \\
	&+2 \; {\rm P.V.} \int_{\mathbb R^N} a(x,y) \frac{|u(x)-u(y)|^{q-2}(u(x)-u(y))}{|x-y|^{N+qs_2}}dy,
\end{align*}
with $1<p\leq q<\infty$,  $0<s_2, s_1<1$ and $a(\cdot,\cdot)\in L^\infty(\mb R^{N}\times\mb R^N)$ is a non-negative symmetric function.\par 
The operator $\mc L$ is known as the fractional double phase operator (as introduced in \cite{defillipPal}), which is a nonlocal analogue of the double phase operator $\mc D u:={\rm div}(|\na u|^{p-2}\na u+a(x)|\na u|^{q-2}\na u)$ with $1<p< q$ and $0\leq a(\cdot)$ bounded. The zero set of the modulating coefficient $a(\cdot)$ dictates the ellipticity of operator $\mc D$, that is, when $a(x)>0$, the problem has the $q$-Laplacian behaviour while on the zero set it has the $p$-Laplacian behaviour.
The corresponding energy functional falls into the category of the so-called functionals with nonstandard growth conditions of $(p, q)$-type, according to Marcellini's terminology \cite{marce2}. These kinds of functionals involve integrals of the form
\begin{align*}
	\mc E(u)=\int_{\Om} F(x,\na u(x))~dx,
\end{align*} 
where the energy density $F$, satisfies the unbalanced growth conditions
$$c_1|\xi|^p\leq |F(x,\xi)|\leq  c_2(|\xi|^q+1),\quad 1\leq p\leq q.$$
The physical significance of these models lies in the field of nonlinear elasticity, specifically in homogenization theory. A particular form of the functional $\mc E$ is the double phase energy functional given by 
$$
u\mapsto\int_{\Omega} (|\nabla u|^p+a(x)|\nabla u|^q)dx,\quad 0\leq a(x)\leq M,\ 1<p<q.
$$
This functional was first introduced by Zhikov in \cite{zhikov1}, to model the Lavrentiev phenomenon on strongly anisotropic materials. \par
The regularity for weak solutions to problems involving the operator $\mc D$ or the minimizers of corresponding functionals has been extensively studied in recent years, see \cite{baroni,byunoh,colombo,defilipis,JDS,marce2} and the  survey article \cite{mingioneJMAA}. Particularly, it is known that for $q\leq p+\ba$ (where $a(\cdot)\in C^{0,\ba}$), the gradient of the bounded weak solutions (or the minimizers) are locally H\"older continuous. 
Concerning the regularity results for time dependent problems involving nonstandard growth operators, we refer to \cite{defilipParGr,Giannetti}.  For the existence results of problems involving these kinds of operators, we mention \cite{arora,bogelein,JDSans,marce1,singer}.\par
In the stationary case and when $a\equiv 0$, the problem \eqref{probM} is driven by the fractional $p$-Laplacian ($(-\De)_p^s$), which is a nonlinear analogue of the well known fractional Laplacian. The study of problems involving fractional Sobolev spaces and corresponding nonlocal operators has gained ample attention due to their wide applications in the real world problems, such as game theory, image processing, anomalous diffusion, finance, conservation laws, phase transition,  obstacle problems  and material science, see \cite{nezzaH} and references therein.  The regularity results for weak solutions to problems involving these kinds of nonlocal operators (or minimizers of corresponding energy functional) are well known, see for instance \cite{brascoH,caffsilv,cozzi,dicastro,iann,nowak} and their references. Specifically, the weak solution is known to be $C^{0,s}$-regular up to the boundary, which is optimal.\par 
As far as nonlocal problems involving the fractional double phase operator are concerned, the corresponding regularity theory is a relatively new topic and has not been explored much barring a few works in the stationary case. In this direction, we mention \cite{defillipPal}, where De Filippis and Palatucci obtained an interior H\"older continuity result for viscosity solutions to the problem $\mc Lu=f\in L^\infty$. For the case of weak solutions, in \cite{fang}, Fang and Zhang proved a local H\"older continuity result (with unspecified H\"older exponent) of bounded weak solutions by using the De Giorgi-Nash-Moser type method, for $qs_2\leq ps_1$ and a non-negative bounded modulating coefficient. Here, additionally authors established that bounded weak solutions are viscosity solutions. Subsequently, by obtaining a suitable Logarithmic lemma and applying the De Giorgi-Nash-Moser iteration technique, Byun et al. in \cite{byun} obtained a similar local H\"older regularity result for the case $ps_1<qs_2$ when the modulating coefficient is assumed to be in the space $C^{0,\ba}(\mb R^N\times\mb R^N)$ with $qs_2-ps_1<\ba<1$. We also quote \cite{scott} for self improving properties of the bounded weak solutions to problems involving the operator $\mc L$, for the case $qs_2\leq ps_1$. For the constant modulating coefficient case, we mention the regularity results for weak solutions obtained in \cite{JDSacv,JDSjga,DDS}. Particularly, in \cite{JDSacv}, the almost optimal H\"older continuity result, up to the boundary, is proved in the super-quadratic case, while in \cite{JDSjga}, the global H\"older continuity result is established for all $p,q>1$. See also the recent work of Chaker et al. \cite{chaker} for the regularity results of nonlocal problems with nonstandard growth operators. However, our problem does not fall into the category of \cite{chaker} due to assumptions on the nonlinear growth and fractional differentiability orders.\par
The regularity theory for time dependent nonlocal problems has been developing significantly since the last decade. For instance,  \cite{caffChVs,kassmSch,chnag-L1} deals with the H\"older regularity of weak solutions by following different methods (weak Harnack inequality or De Giorgi's type approach) for the linear or non-degenerate nonlocal operator case. On the other hand, the development concerning the nonlinear nonlocal operators case is recent. In \cite{stromqvist}, St\"omqvist obtained a local boundedness result for parabolic problems for a large class of nonlocal problems whose prototype includes the fractional $p$-Laplacian for $p>2$. For similar problems and involving a non-zero reaction term, as a parabolic counterpart for \cite{dicastro}, Ding et al. in \cite{ding} proved the local boundedness property for all $p\geq 2N/(N+2s)$.  In continuation to \cite{brascoH}, Brasco et al. \cite{brascoP} obtained a higher interior space-time H\"older continuity result with precise exponents for the local weak solutions to fractional $p$-Laplacian parabolic problem in the super-quadratic case. Recently, in \cite{kadim,liao}, authors have obtained the local H\"older regularity results for weak solution to fractional $p$-Laplacian type problems, for all $p>1$, by means of suitable De Giorgi type iteration method and intrinsic scaling arguments. Concerning the nonstandard growth parabolic problems, in \cite{prasad}, authors have obtained a local boundedness result for variational solutions to a class of nonlocal parabolic double phase equations. \par
After these works the regularity results for parabolic problems driven by the fractional double phase operators were completely open. Addressing this issue, we establish space-time H\"older continuity result with specific exponents for the local weak solutions to problem \eqref{probM} when the modulating coefficient is locally translation invariant in $\Om\times\Om$.
To achieve this aim, we first obtain an iterative scheme involving the discrete differential of the solution with respect to the space variable using Moser's iteration technique much in the spirit of \cite{brascoP}.  Here,  we highlight that handling the terms involving the exponent $q$ is more tactical and tricky as compared to the constant modulating coefficient case (see \cite{JDSacv}).  
Subsequently, we obtain higher time regularity for weak solutions by using the higher space regularity estimates. In order to motivate our main regularity results, we sketch the proof of the existence of the weak solution to an initial value parabolic problem (valid for all $0<s_1,s_2<1$) in the Appendix. The novelty in this regard lies in introducing a new reflexive Banach space and proving fundamental properties such coercivity and continuity for the operator involved. Indeed, because of the varying ellipticity of the operator $\mc L$, it is not expected that the solution should lie on the fractional space $W^{s_1,p}\cap W^{s_2,q}$. Additionally, due to the absence of a suitable weighted fractional Poincar\'e type inequality with respect to the exponent $q$, we take into account appropriate fractional Musielak-Sobolev spaces and corresponding time dependent Banach space to define the notion of (local) weak solutions. \par 
Regarding the stationary case, the higher H\"older regularity for solutions to fractional double phase problems was not explored before. To this aim,  we establish H\"older continuity results with explicit H\"older exponent for weak solutions to problems involving a more general source term and a modulating coefficient that is continuous along the diagonal in $\Om\times\Om$. For this, we perform approximation arguments based on the methods presented in \cite{brascoH,caffsilv}. However, the lack of scaling property of the leading operators makes it difficult to pass the higher H\"older continuity results of problems with locally translation invariant modulating coefficients and homogeneous right hand side to our case. To overcome this, we choose the radius of the ball, where H\"older continuity is obtained, appropriately depending a priori on the data of the problem.
We emphasize that such results to the best of our knowledge were not known even for non-homogeneous operators where the modulating coefficients are bounded away from zero.    
Finally, we obtain a higher global H\"older continuity result for the weak solutions to the stationary problem when the right hand side is bounded, which is new to the current literature on the subject.\par
Turning to the layout of the rest of the paper, in Section 2, we mention the functional framework and our main results. Section 3 contains some technical results which will be needed in the subsequent sections. In Section 4, we establish higher space-time H\"older continuity result for locally translation invariant modulating coefficients. In Section 5, we obtain higher H\"older continuity results in the stationary case for more general modulating coefficients and source terms. Finally, in the appendix, we sketch the proof of the existence of weak solutions to the parabolic problem.

\section{Functions space framework and main results}
\subsection{Notations} 
For $1<q<\infty$, we set $[\xi]^{q-1}:=|\xi|^{q-2}\xi$, for all $\xi\in\mb R^N$ and we use the following notation for weighted nonlocal operators $$(-\De)_{q,a}^{s_2}u(x):=  2{\rm P.V.}\int_{\mb R^N} a(x,y) \frac{[u(x)-u(y)]^{q-1}}{|x-y|^{N+qs_2}}dy.$$
For $x_0\in\mb R^N$ and $v\in L^1(B_r(x_0))$, we set \[(v)_{x_0,r}:=\Xint-_{B_r(x_0)}v(x)dx=\frac{1}{|B_r(x_0)|}\int_{B_r(x_0)}v(x)dx\]
and when the center is clear from the context, we will write it as $(v)_r$.
We denote the parabolic cylinders by
\begin{align*}
	Q_{R,r}(x_0,t_0):=B_R(x_0)\times (t_0-r,t_0],
\end{align*}
where $B_R(x_0)\subset \mb R^N$ and $r>0$.
For simplicity in writing, we will often use the notation $(\el,s)\in\{(p,s_1),(q,s_2)\}$ together with $a_\el$, where $a_p\equiv 1$ and $a_q=a$ in $\mb R^N\times\mb R^N$, unless otherwise specified. We abbreviate the quantity $\|a\|_{L^\infty(\mb R^N\times\mb R^N)}$ by $\|a\|_{\infty}$. In several places, we will use 
\begin{align*}
	d\mu_{\el,s}(x,y):=\frac{dxdy}{|x-y|^{N+\el s}}, \quad\mbox{for }(\el,s)\in \{ (p,s_1), (q,s_2)\},
\end{align*} 
with $d\mu_1=d\mu_{p,s_1}$ and $d\mu_2=d\mu_{q,s_2}$. For $1<p<\infty$, we denote its H\"older conjugate by $p'$, i.e., $1/p+1/p'=1$. Furthermore, for $s\in (0,1)$, we denote the Critical Sobolev exponent by $p^*_{s}:=Np/(N-ps)$ if $N>ps$, otherwise it is an arbitrarily large number.
The constants $c$ and $C$ appearing in the proofs may vary line to line.
For a Banach space $(X,\|\cdot\|)$, we denote its topological dual by $X^*$.

\subsection{Functions spaces and definitions}
In this subsection, we mention the function spaces used in the paper and define the notion of local weak solution to problem \eqref{probM}.
For $E\subset\mb R^N$, $x,y\in E$, $\xi\in\mb R$ and $u:E\to\mb R$, set
\begin{align*}
	& H_a(x,y,\xi):= \frac{|\xi|^p}{|x-y|^{ps_1}}+a(x,y)\frac{|\xi|^q}{|x-y|^{qs_2}} \quad\mbox{and }\\
	&\mc H_a(u,E\times E):=\iint_{E\times E}  H_a(x,y,|u(x)-u(y)|) \frac{dxdy}{|x-y|^N}.
\end{align*}
We define the following space:
\begin{align*}
	\mc W_a(\mb R^N):= \big\{ w\in L^p(\mb R^N) \ : \ \mc H_a(w,\mb R^N\times\mb R^N)<\infty \big\}
\end{align*}
endowed with the norm 
\begin{align*}
	\|u\|_{\mc W_a(\mb R^N)}:= \|u\|_{L^p(\mb R^N)}+[u]_{\mc W_a(\mb R^N)},
\end{align*}
where the Luxembourg seminorm is given by
\begin{align}\label{normLxm}
	[u]_{\mc W_a(\mb R^N)}:=\inf \big\{ \la>0 \ : \mc H_a\Big(\frac{u}{\la},\mb R^N\times\mb R^N\Big)\leq 1 \big\}.
\end{align}
It is a routine exercise to verify that $\mc W_a(\mb R^N)$ is a uniformly convex Banach space. Moreover, by the properties of modular spaces, it is easy to prove that 
\begin{itemize}
	\item[(i)] $\min\{ [u]_{\mc W_a(\mb R^N)}^{p}, [u]_{\mc W_a(\mb R^N)}^q \} \leq \mc H_a(u,\mb R^N\times\mb R^N)$ \\
	 $\leq \max\{ [u]_{\mc W_a(\mb R^N)}^{p}, [u]_{\mc W_a(\mb R^N)}^q \}$,
	\item[(ii)] $[u_n-u]_{\mc W_a(\mb R^N)}\to 0$ if and only if $\mc H_a(u_n-u,\mb R^N\times\mb R^N)\to 0$, as $n\to\infty$.
\end{itemize}
For any proper open  set $\Om\subset\mb R^N$, we define the space $\mc W_a(\Om)$ as below: 
\begin{equation*}
	\begin{aligned}
		\mc W_a(\Om) :=\big\{u\in L^p(\Om) \ : \ \int_{\Om}W_a(x) |u(x)|^q dx+\mc H_a(u,\Om\times\Om)<\infty  \big \},
	\end{aligned}
\end{equation*}
equipped with the norm 
\begin{align*}
	\|u\|_{\mc W_a(\Om)}:= \|u\|_{L^p(\Om)}+\|u\|_{L^q(W_a,\:\Om)}+[u]_{\mc W_a(\Om)},
\end{align*}
where
\begin{align}\label{eqw}
	\|u\|_{L^q(W_a,\:\Om)}^q:=\int_{\Om}W_a(x)|u(x)|^q dx \quad\mbox{with }W_a(x):=\int_{\mb R^N\setminus\Om}\frac{a(x,y)}{|x-y|^{N+qs_2}}dy,
\end{align}
and $[u]_{\mc W_a(\Om)}$ is defined as in \eqref{normLxm} by replacing $\mb R^N\times\mb R^N$ with $\Om\times\Om$. Note that the set of functions in $W^{s_1,p}(\Om)\cap W^{s_2,q}(\Om)$ with compact support is obviously contained in $\mc W_a(\Om)$. Moreover, it is also evident that $\mc W_a(\Om)$ is continuously embedded into $W^{s_1,p}(\Om)$. Here in after, we will suppress the subscript $a$ from the definitions of $\mc W_a$ and $W_a$, if it is clear from the context.  We call a function $u\in \mc W_{\rm loc}(\Om)$ if $u\in \mc W(\Om')$, for all $\Om'\Subset\Om$.\par 
Concerning the time dependent spaces, for $I\subset \mb R$ and a Banach space $X$ with a modular $\vrh$, we define 
\begin{align*}
	\mc Y(I;X):=\bigg\{u(t)\in X \mbox{ for a.e. }t\in I \mbox{ and }\int_{I} \vrh(u(t)) dt<\infty \bigg\}.
\end{align*}
In particular, for the space $\mc Y(I;\mc W(\Om))$, we take the modular $\vrh$ as $$\vrh(u(t)):=\mc H(u(\cdot,t),\Om\times\Om) +\int_{\Om}\big(|u(x,t)|^p+W(x)|u(x,t)|^q\big)dx$$ 
and equip it with the Luxembourg norm on $I$ similar to \eqref{normLxm}. 

\begin{Definition}
	Let $u:\mb R^N\to \mb R$ be a measurable function, $0<m,\sg<\infty$ and $a(\cdot,\cdot)\in L^\infty(\mb R^N\times\mb R^N)$ be a non-negative function. Then, we define the tail space as below:
	\begin{align*}
		L^{m}_{\sg,a}(\mb R^N) = \bigg\{ u\in L^{m}_{\rm loc}(\mb R^N) : \sup_{x\in\mb R^N}\int_{\mb R^N} a(x,y)\frac{|u(y)|^{m}}{(1+|y|)^{N+\sg}}dy <\infty \bigg\}
	\end{align*} 
	and for $a(\cdot,\cdot)\equiv 1$, we simply denote it by $L^m_{\sg}(\mb R^N)$.	For $m>1$, the nonlocal tail centered at $x_0\in\mb R^N$ with radius $R>0$ is defined as 
	\begin{align*}
		T_{\sg,a}^{m}(u;x_0,R)=R^{\sg}\sup_{x\in\mb R^N}\int_{\mb R^N\setminus B_R(x_0)} a(x,y)\frac{|u(y)|^{m-1}}{|x_0-y|^{N+\sg}} dy.
	\end{align*}
	Furthermore, for $J\subset\mb R$, we define the time depending nonlocal tail term by
	\begin{align*}
		T_{\infty,\sg,a}^{m}(u;x_0,R,J)=R^{\sg} \sup_{t\in J}\sup_{x\in\mb R^N} \int_{\mb R^N\setminus B_R(x_0)} a(x,y)\frac{|u(y,t)|^{m-1}}{|x_0-y|^{N+\sg}} dy.
	\end{align*}
	For $1<p,q<\infty$, we will often use the notation
	\begin{align*}
		T_{\infty,\sg}^{p,q}(u;x_0,R,J):=T_{\infty,\sg}^{p}(u;x_0,R,J)+T_{\infty,\sg}^{q}(u;x_0,R,J).	
	\end{align*}
\end{Definition}

\begin{Definition}[Local weak solution]
	Let $f\in (\mc Y(I;\mc W(\Om)))^*$, then we say that $u$ is a local weak solution to problem \eqref{probM}, if for all $J:=[T_0,T_1]\subset I$, 
	\begin{itemize}
		\item[(i)] $u\in \mc Y(J;\mc W_{\rm loc}(\Om))\cap C(J;L^2_{\rm loc}(\Om))\cap L^{p-1}(J;L^{p-1}_{ps_1}(\mb R^N))$\\ $\cap L^{q-1}(J;L^{q-1}_{qs_2,a}(\mb R^N))$ and
		\item[(ii)] for all $\phi\in \mc Y(J;\mc W(\Om))\cap C^1(J; L^2(\Om))$ with compact spatial support contained in $\Om$, there holds
		\begin{align}\label{eqWF}
			&-\int_{J}\int_{\Om} u(x,t)\pa_t \phi(x,t)dxdt \nonumber\\
			&\quad+ \int_{J} \iint_{\mb R^{2N}} \frac{[u(x,t)-u(y,t)]^{p-1}}{|x-y|^{N+s_1p}}(\phi(x,t)-\phi(y,t))dxdydt \nonumber\\
			&\quad+\int_{J} \iint_{\mb R^{2N}} a(x,y) \frac{[u(x,t)-u(y,t)]^{q-1}}{|x-y|^{N+s_2q}}(\phi(x,t)-\phi(y,t))dxdydt \nonumber \\
			&= \int_{\Om} u(x,T_0) \phi(x,T_0)dx- \int_{\Om} u(x,T_1) \phi(x,T_1)dx+  \langle f(\cdot,\cdot),\phi(\cdot,\cdot)\rangle_{\mc Y, \mc Y^*}.
		\end{align}
	\end{itemize}
	Local weak sub-solution (resp. super-solution) is defined similarly where \eqref{eqWF} holds by replacing the sign "$=$" with "$\leq$ (resp. $\geq$)" for all non-negative test functions. 
\end{Definition}
We say that a function $u$ is locally $\al$-H\"older continuous in space, denoted by $C_{x,{\rm loc}}^{0,\al}(\Om\times I)$, if for any $K\times J\Subset \Om\times I$, 
\begin{align*}
	\sup_{t\in J}\; [u(\cdot,t)]_{C^{0,\al}(K)}<\infty
\end{align*}
and analogously the local $\ga$-H\"older continuity in time is defined.\par
Finally, we recall the following standard inequality:  for $\ell\geq 2$, there exists a constant $c(\ell)>0$ such that 
\begin{align}\label{eqmon}
	c(\ell)|\xi-\zeta|^\ell \leq ( [\xi]^{\ell-1}- [\zeta]^{\ell-1}) (\xi-\zeta)  \quad\mbox{for all }\xi,\zeta\in\mathbb R.
\end{align}

\subsection{Assumptions and main results}
We establish a higher H\"older continuity result for locally translation invariant modulating coefficients. Precisely, we assume the following assumption:
\begin{itemize}
	\item[\textbf{(A.1)}]The function $a(\cdot,\cdot)$ is locally translation invariant in $\Om\times\Om$, that is, for any $K_1\times K_2\Subset\Om\times\Om$, there holds $a(x+h,y+h)=a(x,y)$ for all  $(x,y)\in K_1\times K_2$ and for all $|h|<\min\{{\rm dist}(K_1,\Om), {\rm dist}(K_2,\Om)\}$.
\end{itemize}
Before stating our main result in this regard, we fix the following notations:
\begin{align}\label{eqTheta}
	\Theta\equiv \Theta(p,s_1):=\min\bigg\{1,\frac{ps_1}{p-1} \bigg \}
\end{align}
and \begin{align}\label{eqGamma}
	\Ga\equiv \Ga(p,s_1):=\begin{cases}
		\frac{1}{ps_1-(p-2)} &\mbox{if }s_1\geq \frac{p-1}{p},\\
		1 &\mbox{if }s_1< \frac{p-1}{p}.
	\end{cases}
\end{align}
\begin{Theorem}\label{thminh}
 Suppose that $p\geq 2$ and $qs_2\leq ps_1$. Let the assumption {\rm \textbf{(A.1)}} holds and let $u$ be a local weak solution to problem \eqref{probM} such that $u\in L^\infty_{\rm loc}(I; L^\infty(\mb R^N))$. 
 Then, for all $\al\in (0,\Theta)$ and all $\ga\in (0,\Ga)$, 
	\begin{align*}
		u\in C^{0,\al}_{x, {\rm loc}}(\Om\times I)\cap C^{0,\ga}_{t, {\rm loc}}(\Om\times I),
	\end{align*} 
 where $\Theta$ and $\Gamma$ are given by \eqref{eqTheta} and \eqref{eqGamma}, respectively.\\
 Particularly, for $Q_{2R,2R^{s_1p}}\equiv Q_{2R,2R^{s_1p}}(x_0,\hat t_0)\Subset\Om\times I$ and for all $\al\in (0,\Theta)$ and $\ga\in (0,\Ga)$, there exists a positive constant $C$ depending only on $N,s_1,s_2,p,q,\al,\ga$ and $\|a\|_\infty$ such that 
	\begin{align*}
		&|u(x,t)-u(y,\tau)| \nonumber\\
		&\leq  \frac{C}{R^{\al}} \Big(\|u\|_{L^\infty(\mb R^N\times[\hat t_0-R^{s_1p},\hat t_0])}+1+\|f\|^\frac{1}{p-1}_{L^\infty(Q_{R,R^{s_1p}})}\Big)^\frac{qj_\infty}{p} |x-y|^\al \\ 
		&+\frac{C}{R^{ps_1\ga}} \Big(\|u\|_{L^\infty(\mb R^N\times[\hat t_0-R^{s_1p},\hat t_0])}+1+\|f\|^\frac{1}{p-1}_{L^\infty(Q_{R,R^{s_1p}})}\Big)^\frac{q(\ga(p-2)+1)j_\infty}{p}|t-\tau|^\ga, 
	\end{align*}
 for all $x,y\in B_\frac{R}{4}(x_0)$ and $t,\tau\in (\hat t_0-\frac{R^{ps_1}}{4},\hat t_0]$, where  $j_\infty\in\mb N$ depends only on $N,p,s_1$ and $\al$.
\end{Theorem}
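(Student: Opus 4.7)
The strategy is a Moser iteration on discrete spatial differences of $u$, in the spirit of the fractional $p$-Laplacian treatment of \cite{brascoP}, followed by a bootstrap from space regularity to time regularity. Fix $h\in\mb R^N$ with $|h|$ small, set $u_h(x,t):=u(x+h,t)$ and $v:=u_h-u$, and note that assumption \descref{A.1} is precisely what makes $u_h$ a weak solution of \eqref{probM} on a slightly smaller cylinder with right hand side $f(\cdot+h,\cdot)$; subtracting the two weak formulations gives a parabolic inequality for $v$. Testing it against $|v|^{r-2}v\,\eta^q(x)\tau^2(t)$ for $r\geq p$, with $\eta$ a smooth spatial cut-off in $B_R(x_0)$ and $\tau$ a time cut-off on $(\hat t_0-R^{ps_1},\hat t_0]$, and applying the algebraic monotonicity \eqref{eqmon} (which requires $p\geq 2$) to each of the two nonlocal pieces (with the non-negative coefficient $a$ simply carried along on the $q$-part), one arrives at a Caccioppoli-type inequality controlling both a weighted Gagliardo energy of $v$ at order $p$ and the term $\sup_t\|v(\cdot,t)\eta^{q/r}\|_{L^r}$.

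Combining this Caccioppoli inequality with the fractional Sobolev embedding $W^{s_1,p}\hookrightarrow L^{p^*_{s_1}}$ applied to the $p$-piece produces a reverse H\"older estimate $\|v\|_{L^{r\kappa}(Q')}\leq C^{1/r}\|v\|_{L^r(Q'')}$ for nested sub-cylinders $Q'\Subset Q''$ and a fixed Sobolev gain $\kappa>1$, where $C$ depends explicitly on $R$, $\|u\|_{L^\infty}$, $\|a\|_\infty$ and $\|f\|_{L^\infty}$. The hypothesis $qs_2\leq ps_1$ enters crucially here: it ensures that the $q$-contribution can be absorbed by the $p$-one at fractional differentiability order $s_1$, so the Sobolev gain is dictated by the pair $(p,s_1)$ and produces the soft exponent $\Theta$ in \eqref{eqTheta}. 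Iterating along a geometric sequence of sub-cylinders with exponents $r_j:=p\kappa^j$ up to $j=j_\infty$, and carefully tracking every constant (since $\mc L$ has no scaling), yields
\begin{equation*}
  \sup_{(x,t)\in Q_{R/4,R^{ps_1}/4}} \frac{|u(x+h,t)-u(x,t)|}{|h|^\alpha}\leq \frac{C}{R^\alpha}\Bigl(\|u\|_{L^\infty(\mb R^N\times[\hat t_0-R^{ps_1},\hat t_0])}+1+\|f\|^{1/(p-1)}_{L^\infty(Q_{R,R^{ps_1}})}\Bigr)^{qj_\infty/p}
\end{equation*}
uniformly in small $|h|$ and every $\alpha<\Theta$; the standard finite-difference characterisation of H\"older continuity then yields the spatial part of the theorem.

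With the $C^{0,\alpha}_x$ seminorm in hand, time regularity is obtained by testing the weak formulation at a fixed $x_0\in B_{R/4}$ against a time mollification of $\chi_{[\tau,t]}$ and estimating $\int_\tau^t\mc L u(\cdot,s)\,ds$ pointwise: in the near-diagonal part of each nonlocal integral one replaces $[u(x)-u(y)]^{p-1}$ by $[u]^{p-1}_{C^{0,\alpha}_x}|x-y|^{\alpha(p-1)}$, which is integrable against $|x-y|^{-(N+ps_1)}$ provided $\alpha(p-1)>ps_1-1$, while the far part is controlled by $\|u\|_{L^\infty}$; optimising the resulting bound in $\alpha$ produces exactly the soft time exponent $\Gamma$ of \eqref{eqGamma}, and once again $qs_2\leq ps_1$ guarantees that the $q$-term is no worse than the $p$-term. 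The main obstacle throughout is the Moser step: unlike in the pure fractional $p$-Laplacian case of \cite{brascoP}, the operator $\mc L$ enjoys no scaling symmetry, so every constant must be propagated explicitly in $R$, $\|u\|_{L^\infty}$, $\|a\|_\infty$ and $\|f\|_{L^\infty}$, and cross-terms between the two kernels in the Caccioppoli step must be controlled by combining the boundedness of $a$ with the ordering $qs_2\leq ps_1$.
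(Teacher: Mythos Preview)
Your plan cites \cite{brascoP} but then describes a different mechanism than the one actually used there and in this paper. What you outline is a standard Moser iteration in integrability: a reverse H\"older estimate $\|v\|_{L^{r\ka}}\leq C^{1/r}\|v\|_{L^r}$ for $v=\de_h u$, iterated up to $L^\infty$. That can only yield $\|\de_h u\|_{L^\infty}\leq C(\text{data})\,\|\de_h u\|_{L^p}$, and since the right-hand side is at best $O(|h|^{s_1})$ from the basic $W^{s_1,p}$ energy, you obtain $\al\leq s_1$, not every $\al<\Theta$ (recall $\Theta=\min\{1,ps_1/(p-1)\}>s_1$). The scheme of \cite{brascoP}, implemented here in Propositions~\ref{propin1} and~\ref{propin2}, is not an integrability iteration for $\de_h u$: one tests with $[\de_h u]^{\ka}\eta^p/|h|^{1+\nu\ka}$ and converts the resulting $W^{s_1,p}$ energy of $[\de_h u]^{(\ka+p-1)/p}/|h|^{(1+\nu\ka)/p}$ into control of the \emph{second-order} difference $\de^2_h u/|h|^{(1+\nu\ka+s_1p)/(\ka+p-1)}$, so that each step gains $s_1p$ in the $|h|$-exponent while raising the Lebesgue exponent by one. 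It is this simultaneous gain in differentiability and integrability that reaches $\Theta$, and the argument is genuinely two-stage: a first pass (Theorem~\ref{thmins}) reaches every $\al<s_1$, and this intermediate H\"older bound is then fed back into a refined iteration (Proposition~\ref{propin2}, Theorem~\ref{thminwtl}) to reach $\al<\Theta$. None of this structure appears in your sketch.

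Your time-regularity step also does not work as written: for a weak solution one cannot ``test at a fixed $x_0$'' nor evaluate $\mc Lu(\cdot,s)$ pointwise. The paper instead bounds the Campanato quantity $\Xint-_{Q_{r,\theta}(x_0,t_0)}|u-(u)_{r,\theta}|$ by testing the weak formulation against a spatial cut-off $\psi\in C^\infty_c(B_{r/2}(x_0))$ on $[T_0,T_1]$ (Proposition~\ref{proptime1}), and then optimises $\theta\sim r^{ps_1-(p-2)\al}$ to obtain $\ga=\al/(ps_1-(p-2)\al)$. Your near/far splitting of the kernel and the role of $qs_2\leq ps_1$ are correct in spirit, but must be carried out at the level of these averages, not pointwise.
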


\begin{Remark}
	We remark that Theorem \ref{thminh} is true for local weak solutions that are not bounded in the whole $\mb R^N$ with respect to the space variable. In this regard, we need a weaker condition, namely $u\in L^\infty_{{\rm loc}}(\Om\times I)\cap L^{\infty}_{\rm loc}(I;L^{p-1}_{ps_1}(\mb R^N)) \cap L^{\infty}_{\rm loc}(I;L^{q-1}_{qs_2}(\mb R^N))$. However, in this case, the value of   $\Ga$ (the exponent concerning the time regularity) is reduced and given by $\Ga:=\min\{1,1/(ps_1)\}$ (see Theorem \ref{thminwtl} and Remark \ref{remwtglbd} for further details).
\end{Remark}
Next, we consider the following stationary counterpart of problem \eqref{probM}:
\begin{equation*}
	\mc L u   = f \; \; \text{ in } \Om, \tag{${\mc S}$}\label{probSt}
\end{equation*}
where $f\in L^{\Bbbk}_{\rm loc}(\Om)$, for $\Bbbk>\max\{N/(ps_1),1\}$.
\begin{Remark}\label{rem.2.doubl.phas}
 For the case $ps_1<qs_2<ps_1+2(1-s_1)$, we point out that under the assumption {\rm \textbf{(A.1)}}, with Proposition \ref{propin1} in our hand, we obtain that estimates \eqref{eqi50} holds for a local weak solution $u$ to problem \eqref{probM} in this case, too. This in turns implies that $u\in C^{0,\al}_{x,\rm loc}(\Om\times I)$ for all $\al<s_1$. In particular, for the stationary problem \eqref{probSt}, under the assumption {\rm \textbf{(A.1)}}, we have that $u\in C^{0,\al}_{\rm loc}(\Omega)$ for all $\al<s_1$, whenever $qs_2<ps_1+2(1-s_1)$. On the other hand, for the case $qs_2\leq ps_1$, a stationary counterpart of Theorem \ref{thminh} holds for all $\al<\Theta$ with $\Theta$ given by \eqref{eqTheta}. 
\end{Remark} 
In the result below, we remove the restriction of local translation invariant nature of $a(\cdot,\cdot)$ in $\Om\times\Om$ by the following: 
\begin{itemize}
	\item[\textbf{(A.2)}] The function $a(\cdot,\cdot)$ is continuous along the diagonal in $\Om\times\Om$, that is, for any $R>0$ with $B_R(x_0)\Subset\Om$, there exists $\e>0$ such that for all $\de>0$, there exists $h_\de>0$ satisfying 
	\begin{align*}
		\sup_{\substack{x,y\in B_R(x_0)\\ |x-y|\leq \e}} |a(x+h,y+h)-a(x,y)|\leq \de \quad\mbox{for all }|h|<{h_\de}.
	\end{align*}
\end{itemize}
\begin{Theorem}\label{thmingenk}
 Suppose that $2\leq p\leq q<\min\{p^*_{s_1}, ps_1/s_2\}$. Let $a(\cdot,\cdot)$ satisfies the assumption {\rm\textbf{(A.2)}} and let $u$ be a local weak solution to problem \eqref{probSt} such that $u\in  L^{q-1}_{qs_2}(\mb R^N)$. Then, $u\in C^{0,\al}_{\rm loc}(\Om)$ for all $\al\in (0,{\Theta_1})$, where 
	\begin{align}\label{eqThetaiot}
		{\Theta_1}:=\min\bigg\{ \frac{ps_1-N/\Bbbk}{p-1},\frac{qs_2}{q-1},1\bigg\}.
	\end{align}
\end{Theorem}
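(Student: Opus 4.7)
The plan is to use an approximation argument in the spirit of \cite{brascoH,caffsilv}, invoking the higher H\"older regularity for the locally translation invariant case (the stationary counterpart of Theorem \ref{thminh}) as a black box, together with a freezing of the modulating coefficient enabled by assumption \descref{A.2}.

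Fix $x_0\in\Om$ and a reference radius $r_0>0$ with $B_{r_0}(x_0)\Subset\Om$. For every $r\in(0,r_0]$ I would first construct a locally translation invariant coefficient $\bar a=\bar a_{r}$ approximating $a$ on the near-diagonal strip $\{(x,y)\in B_r(x_0)\times B_r(x_0):|x-y|\leq \e\}$ with $|\bar a-a|\leq \de$; assumption \descref{A.2} furnishes the pair $(\e,\de)$ and the admissible translation range. A natural choice is $\bar a(x,y):=a(x_0+(x-y)/2,\,x_0-(x-y)/2)$, which depends only on $x-y$ and hence satisfies \descref{A.1}. Then I would introduce the comparison function $v$ as the unique solution of the frozen homogeneous problem
\begin{equation*}
    \mc L_{\bar a} v = 0 \ \text{in } B_r(x_0),\qquad v=u \ \text{in } \mb R^N\setminus B_r(x_0),
\end{equation*}
to which the stationary counterpart of Theorem \ref{thminh} applies, yielding $[v]_{C^{0,\al}(B_{r/2}(x_0))}\leq C r^{-\al}(\|v\|_{L^\infty(\mb R^N)}+1)^{\kappa}$ for every $\al<\Theta$ and some explicit $\kappa=\kappa(N,p,q,s_1,s_2,\al)$.

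The second step is a comparison estimate. Testing the weak formulations of \eqref{probSt} and of the frozen problem against $u-v$ (extended by zero outside $B_r(x_0)$), subtracting, and invoking the monotonicity inequality \eqref{eqmon} yield
\begin{equation*}
    \mc H_{\bar a}(u-v,B_r(x_0)\times B_r(x_0))\leq C\de\, \mc H_q(u,B_r(x_0)\times B_r(x_0))+C\int_{B_r(x_0)}|f||u-v|\,dx,
\end{equation*}
where the first term on the right comes from $|a-\bar a|\leq\de$ on the near-diagonal strip, and the off-diagonal contribution is absorbed via $\|a\|_\infty$. The fractional Sobolev embedding applied to the left-hand side, combined with H\"older's inequality and $\Bbbk>N/(ps_1)$, yields after calibrating $\de=\de(r)$ small
\begin{equation*}
    \|u-v\|_{L^\infty(B_{r/2}(x_0))}\leq C\,r^{(ps_1-N/\Bbbk)/(p-1)}\bigl(\|f\|_{L^\Bbbk(B_r)}^{1/(p-1)}+1\bigr)+\omega(r),
\end{equation*}
where $\omega(r)\downarrow 0$ is the modulus supplied by \descref{A.2}; a symmetric analysis on the $q$-piece contributes the rate $r^{qs_2/(q-1)}$.

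Finally, combining the H\"older estimate for $v$ with the $L^\infty$ control of $u-v$ gives a decay of oscillation of the form $\mathrm{osc}_{B_{\tau r}(x_0)} u\leq C\tau^{\al}r^{\al}\,(\text{data})+C\,r^{\Theta_1}\,(\text{data})$ for all $\tau\in(0,1/2)$ and $\al<\Theta_1$. Choosing $\tau$ so that $C\tau^\al\leq \tau^{\al'}/2$ for some $\al<\al'<\Theta_1$ and iterating in the standard Campanato fashion produces $u\in C^{0,\al}_{\rm loc}(\Om)$ with the claimed exponent. The principal obstacle throughout is the absence of scaling invariance for $\mc L$: the $p$- and $q$-pieces admit no common natural rescaling, which forbids the usual reduction to a unit-size problem and forces one to fix the reference radius $r_0$ a priori in terms of all the data $N,p,q,s_1,s_2,\|a\|_\infty,\|u\|_{L^{q-1}_{qs_2}(\mb R^N)},\|f\|_{L^\Bbbk}$, and to calibrate simultaneously $\de,\e$ and $\tau$. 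Balancing the three independent error contributions (the $q$-part mismatch $\de\,\mc H_q$, the non-homogeneous $f$-term, and the natural rate from the translation invariant problem) is precisely what generates the three quantities inside the minimum defining $\Theta_1$ in \eqref{eqThetaiot}.
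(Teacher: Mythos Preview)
Your overall strategy --- freeze the coefficient via assumption \descref{A.2}, compare $u$ to the solution $v$ of the frozen homogeneous problem, invoke the translation-invariant regularity for $v$, and iterate --- is exactly the paper's route.  However there is a genuine gap at the key step.

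The passage from the energy comparison
\[
\mc H_{\bar a}(u-v,B_r\times B_r)\ \lesssim\ \de\,\mc H_q(u,B_r\times B_r)+\int_{B_r}|f||u-v|
\]
to an $L^\infty$ bound on $u-v$ is not justified.  The fractional Sobolev embedding applied to $[u-v]_{W^{s_1,p}}$ yields only an $L^{p^*_{s_1}}$ bound (recall $ps_1<N$ is allowed); it does \emph{not} give $L^\infty$ control, and $u-v$ does not itself solve an equation to which a De Giorgi--Moser argument applies.  Consequently your displayed estimate for $\|u-v\|_{L^\infty(B_{r/2})}$, and hence the subsequent Campanato iteration, is unsupported.

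The paper closes this gap differently.  In Lemma~\ref{lemstab} the $L^\infty$ closeness of $u$ and $v$ is obtained by a \emph{compactness/contradiction} argument: the energy estimate gives $\|u_m-v_m\|_{L^{p^*_{s_1}}}\to 0$ along a hypothetical bad sequence, while the basic (unspecified-exponent) H\"older regularity of Proposition~\ref{holdunspstn} for $u_m$ and the stationary version of Theorem~\ref{thminwtl} for $v_m$ supply uniform $C^{0,\al_1}$ bounds; Arzel\`a--Ascoli then upgrades the $L^{p^*_{s_1}}$ convergence to $L^\infty$, yielding a contradiction.  This produces a \emph{qualitative} modulus $\vep(\de)\to 0$, not an explicit power of $r$.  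The iteration in Proposition~\ref{propap2} is accordingly different from a standard Campanato scheme: one rescales $w_n(x)=\rho^{-n(\Theta_1-\e)}u(\rho^n x)$ at each step, checks that the rescaled data $a_n,f_n$ still satisfy the smallness hypotheses of Lemma~\ref{lemstab} (this is precisely where $qs_2<ps_1$ and $\Bbbk>N/(ps_1)$ enter, via \eqref{eqif_n} and \eqref{eqia_n}), and then uses the fixed $\vep(\de)$ together with the $C^{0,\Theta_1-\e/2}$ regularity of the frozen comparison $v_n$ to propagate the inductive bound \eqref{eqi61}.

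In short: your outline is correct in spirit, but you must replace the direct Sobolev-to-$L^\infty$ step by a compactness argument (using the low-exponent H\"older bound as the equicontinuity input), and adjust the iteration accordingly.
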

Regarding the global H\"older continuity result, we have the following.
\begin{Remark}
 Let us consider the principal nonlocal operator as $(-\De)_{p,b}^{s_1} + (-\De)_{q,a}^{s_2}$, where the new coefficient function $b$ is such that $0<\Lambda^{-1}\leq b(x,y)\leq \Lambda$, for all $x,y\in\mb R^N$. Then, under the same vanishing assumption as that of the coefficient function $a$, we remark that by following the same argument as in this paper, all the results of Theorem \ref{thminh}, Theorem \ref{thmingenk} and Remark \ref{rem.2.doubl.phas} can be extended to the case of problems \eqref{probM} and \eqref{probSt} with the aforementioned operator. 
\end{Remark} 
\begin{Corollary}\label{corbdry}
 Let $\Om$ be a bounded domain with $C^{1,1}$ boundary and let the hypotheses of Theorem \ref{thmingenk} hold. Additionally, if $s_1<q's_2$, assume that  $a(\cdot,\cdot)\in C^{0,\ba}(\mb R^{2N})$ with $\ba>s_2$. Let $u\in\mc W(\mb R^N)$ be a weak solution to problem \eqref{probSt} with $f\in L^\infty(\Om)$ and the homogeneous Dirichlet boundary condition in $\mb R^N\setminus\Om$. Then, $u\in C^{0,\al}(\mb R^N)$, for all $\al<s_1$.
\end{Corollary}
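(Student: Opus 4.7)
The plan is to combine the interior Hölder continuity given by Theorem \ref{thmingenk} with a boundary decay estimate of the form $|u(x)|\le C\,d(x,\partial\Om)^{s_1}$, and then patch the two via a standard rescaling argument. Since $f\in L^\infty(\Om)$, I may apply Theorem \ref{thmingenk} with $\Bbbk$ arbitrarily large, so ${\Theta_1}=\min\{ps_1/(p-1),\,qs_2/(q-1),\,1\}$; as $p,q\ge 2$ this quantity strictly exceeds $s_1$, so the interior estimate is more than sufficient to produce any exponent $\al<s_1$ on compact subsets of $\Om$. The real content of the corollary is therefore the extension of this regularity up to $\partial\Om$.

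First I would establish the boundary decay estimate. Fixing $x_0\in\partial\Om$, the $C^{1,1}$-regularity of $\partial\Om$ allows me to construct, on a small half-neighborhood of $x_0$, a barrier $v$ of torsion type whose boundary behaviour is precisely $d(\cdot,\partial\Om)^{s_1}$, built from the well-studied solutions of the fractional $p$-Laplacian equation $(-\De)_p^{s_1}v=1$ with homogeneous Dirichlet data (cf.\ \cite{JDSjga,JDSacv}). The task is then to check that a suitable multiple of $v$ is a supersolution of \eqref{probSt}, which reduces to dominating $(-\De)^{s_2}_{q,a}v$ by $(-\De)_p^{s_1}v+\|f\|_\infty$ near $x_0$. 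In the regime $s_1\ge q's_2$ this follows by direct computation, since the $q$-contribution produces only lower-order terms relative to the $p$-leading part; in the remaining regime $s_1<q's_2$ the extra hypothesis $a(\cdot,\cdot)\in C^{0,\ba}(\mb R^{2N})$ with $\ba>s_2$ is invoked to freeze the modulating coefficient along the diagonal and treat the $q$-term as a Hölder perturbation of a translation-invariant problem, in the spirit of \cite{JDSacv,chaker}. An appeal to the comparison principle for $\mc L$ then yields $|u|\le C\,d(\cdot,\partial\Om)^{s_1}$ on $\Om$.

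With the boundary decay in hand, the global Hölder bound follows by a standard patching argument. For $x,y\in\mb R^N$, set $r=|x-y|$, $d_x=d(x,\partial\Om)$ and $d_y=d(y,\partial\Om)$. When either point lies in $\mb R^N\setminus\Om$, the vanishing of $u$ on the complement together with the boundary decay yields $|u(x)-u(y)|\le Cr^{s_1}$ directly. When $r\le \tfrac12\min(d_x,d_y)$, I apply Theorem \ref{thmingenk} on the ball $B_{d_x/2}(x)\Subset\Om$; the scale-invariant form of the interior estimate, combined with $\|u\|_{L^\infty(B_{d_x/2}(x))}\le C d_x^{s_1}$ (from the boundary decay) and a standard control of the nonlocal tails by the same quantity, yields $|u(x)-u(y)|\le C\,d_x^{s_1-\al}\,r^\al\le Cr^{s_1}$ for any $\al<s_1$, with no blow-up as $d_x\to 0$. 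The remaining regime $r>\tfrac12\min(d_x,d_y)$ is handled by triangle inequality together with boundary decay, again giving $|u(x)-u(y)|\le Cr^{s_1}$. Taking $\al<s_1$ in the interior estimate then concludes the proof.

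The main obstacle is the supersolution verification in the regime $s_1<q's_2$: there the $q$-term of $\mc L$ is no longer subordinate to the $p$-term near $\partial\Om$, and one must exploit the Hölder regularity of $a$ in a quantitative way. The condition $\ba>s_2$ is precisely what allows the error produced by replacing $a(x,y)$ with $a(x,x)$ (a locally translation invariant coefficient) to be absorbed into the right-hand side at a strictly lower order than $(-\De)^{s_2}_{q,a}v$ itself; any smaller $\ba$ would leave an error at the same scale as the leading $q$-term and the barrier argument would collapse.
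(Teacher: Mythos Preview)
Your proposal is correct and follows essentially the same route as the paper: interior regularity from Theorem \ref{thmingenk}, a boundary decay estimate via a barrier/comparison argument, and a standard patching. The paper packages the boundary step as Proposition \ref{propupper}, which computes $(-\De)^{s_2}_{q,a}d^{\al}$ directly for $\al\in[s_2,1)$ and obtains $|u|\le \mc C_u\,d^{\sg}$ for every $\sg<s_1$ (not the sharp $d^{s_1}$ you claim, though this weaker form is all that is needed for $\al<s_1$); your torsion-function barrier is a cosmetic variant of the same construction, and your reading of why the hypothesis $\ba>s_2$ enters in the regime $s_1<q's_2$ matches the paper's use of it exactly.
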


\section{Some technical results}
\subsection{The case of parabolic problems}
We first define the time regularization of the test functions as below.
\begin{Definition}[Regularization of test functions]\label{regtst}
	Let $\varsigma:\mb R\to \mb R$ be a non-negative, even smooth function with compact support in $(-1/2,1/2)$ satisfying $\int_{\mb R}\varsigma(\tau)d\tau=1$. For $g\in L^1((a,b))$, we define
	\begin{align*}
		g^{\vep}(t)=\frac{1}{\vep} \int_{t-\vep/2}^{t+\vep/2}\varsigma\Big(\frac{t-\sg}{\vep}\Big)g(\sg)d\sg= \frac{1}{\vep} \int_{-\vep/2}^{\vep/2}\varsigma\Big(\frac{\sg}{\vep}\Big)g(t-\sg)d\sg,
	\end{align*}
	for $t\in (a,b)$ with $\vep\in (0,\min\{b-t,t-a\})$. 
\end{Definition}
\begin{Lemma}[Caccioppoli inequality]\label{cacciopp} Let $u$ be a local sub-solution to problem \eqref{probM} such that $u\in L^\infty_{{\rm loc}}(\Om\times I)\cap L^{\infty}_{\rm loc}(I;L^{p-1}_{ps_1}(\mb R^N)) \cap L^{\infty}_{\rm loc}(I;L^{q-1}_{qs_2,a}(\mb R^N))$. Suppose that $B_r\equiv B_r(x_0)\Subset \Om$, for some $r>0$ and $t_2<t_3$, $\tau>0$ satisfies $[t_2-\tau,t_3]\subset I:=(t_0,t_1]$. Then, for all non-negative functions $\psi\in C_c^\infty(B_r)$ and $\eta\in C^\infty(\mb R)$ satisfying $\eta\equiv 0$ if $t\leq t_2-\tau$ and $\eta(t)\equiv 1$ if $t\geq t_2$, there exists a constant $C=C(N,s_1,s_2,p,q,\|a\|_{\infty})>0$ such that   
	\begin{align*}
		&\sup_{t_2<t<t_3}\int_{B_r}w_+^2(x,t)\psi^q(x)dx \nonumber\\
		&\,\,+\int_{t_2-\tau}^{t_3}\int_{B_r} \int_{B_r} \frac{|w_+(x,t)\psi^\frac{q}{p}(x)-w_+(y,t)\psi^\frac{q}{p}(y)|^p}{|x-y|^{N+s_1p}}\eta^2(t)dxdydt \nonumber\\ 
		&\,\,+\int_{t_2-\tau}^{t_3}\int_{B_r} \int_{B_r}a(x,y) \frac{|w_+(x,t)\psi(x)-w_+(y,t)\psi(y)|^q}{|x-y|^{N+s_2q}}\eta^2(t)dxdydt \nonumber\\
		&\leq C \sum_{(\el,s)} \int_{t_2-\tau}^{t_3}\int_{B_r}\int_{B_r} a_\el(x,y) \frac{|\psi(x)-\psi(y)|^\el}{|x-y|^{N+s\el}}\\
		&\qquad\qquad\times(w_+(x,t)+w_+(y,t))^\el\eta^2(t) dxdydt \nonumber\\
		&\ + C   \sum_{(\el,s)} \sup_{\substack{t_2-\tau<t<t_3 \\ x\in {\rm supp}(\psi)}} \Bigg( \int_{\mb R^N\setminus B_r}a_\el(x,y) \frac{w_+(y,t)^{\el-1}}{|x-y|^{N+s\el}}dy\Bigg) \nonumber\\ &\quad\qquad\times\int_{t_2-\tau}^{t_3}\int_{B_r}w_+(x,t)\psi^q(x)\eta^2(t)dxdt \nonumber\\
		& \ + C \int_{t_2-\tau}^{t_3}\int_{B_r} |f(x,t)| w_+(x,t)\psi^q(x)\eta^2(t)dxdt \nonumber\\
		& \,+C \int_{t_2-\tau}^{t_3} \int_{B_r}w_+^2(x,t)\psi^q(x)\eta(t)|\pa_t\eta(t)|dxdt,
	\end{align*}  
	where $w_+=\max\{u-k,0\}$, for $k\in\mb R$.
\end{Lemma}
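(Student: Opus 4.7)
The proof follows the standard Caccioppoli scheme for nonlocal parabolic problems, adapted to the double phase setting. First I would test the weak formulation of \eqref{probM} against a time--regularized version of $\phi(x,t)=w_+(x,t)\psi^q(x)\eta^2(t)$. Since $u$ is only in $C(J;L^2_{\mathrm{loc}})$, one cannot take $w_+$ directly as a test function in the $t$--variable; instead one applies the mollification of Definition \ref{regtst} (or equivalently a Steklov average), writes the identity with $u^{\vep}$ in place of $u$ in the test function, uses that $\pa_t(u^{\vep})$ is a bona fide $L^2$ function to integrate the time term by parts, and finally passes to the limit $\vep\to 0$. Because $\eta$ vanishes for $t\le t_2-\tau$, the time derivative term produces, at $\vep=0$, precisely
\[
\tfrac12\sup_{t_2<t<t_3}\!\int_{B_r} w_+^2(x,t)\psi^q(x)dx - \tfrac12\!\int_{t_2-\tau}^{t_3}\!\int_{B_r} w_+^2(x,t)\psi^q(x)\eta(t)|\pa_t\eta(t)|dxdt,
\]
the first piece giving the $\sup$--term on the left and the second being absorbed into the last term on the right.

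Next I would handle the spatial integrals. Splitting $\mathbb R^N\times\mathbb R^N$ as $(B_r\times B_r)\cup (B_r\times (\mathbb R^N\setminus B_r))\cup ((\mathbb R^N\setminus B_r)\times B_r)$ and using the symmetry of the kernels (together with $\psi\equiv 0$ outside $B_r$), the $(B_r\times B_r)$ contribution has to be bounded below and the remaining cross terms estimated above. For the inner part I would invoke the standard Dyda--Kassmann type pointwise inequalities
\begin{align*}
 [\xi-\zeta]^{p-1}\big(w_+(\xi)\psi^q(x)-w_+(\zeta)\psi^q(y)\big)&\ge c\,|w_+(\xi)\psi^{q/p}(x)-w_+(\zeta)\psi^{q/p}(y)|^p\\
 &\quad -C\big(w_+(\xi)+w_+(\zeta)\big)^p|\psi^{q/p}(x)-\psi^{q/p}(y)|^p,
\end{align*}
and its analogue with exponent $q$ (in which the natural cutoff that appears on the left is $\psi$ rather than $\psi^{q/p}$, explaining why the statement has $\psi^{q/p}$ in the $p$--part and $\psi$ in the $q$--part). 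Here $\xi=u(x,t)$, $\zeta=u(y,t)$ and the algebraic identity $(w_+(\xi)-w_+(\zeta))(\xi-\zeta)\ge (w_+(\xi)-w_+(\zeta))^2$ is used to reduce $[\xi-\zeta]^{\el-1}$ estimates on $u$ to those on $w_+$.

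For the nonlocal tail (the two cross terms) the test function evaluated at $y\in\mathbb R^N\setminus B_r$ is zero, so the integrand reduces, after using the sign of $[u(x,t)-u(y,t)]^{\el-1}w_+(x,t)\psi^q(x)$, to a bound of the form
\[
\int_{\mathbb R^N\setminus B_r} a_\el(x,y)\frac{w_+(y,t)^{\el-1}}{|x-y|^{N+s\el}}dy\cdot w_+(x,t)\psi^q(x),
\]
yielding the supremum--in--$(x,t)$ tail term in the statement. Taking the supremum over $x\in\mathrm{supp}(\psi)$ and $t\in(t_2-\tau,t_3)$ gives the required finite quantity by the assumed integrability of $u$. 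The source term is handled trivially by H\"older's inequality and gives the $f$--contribution.

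Summing the $p$ and the $q$ contributions, absorbing the ``good'' terms on the left, and passing to the limit $\vep\to 0$ produces the stated inequality. The main technical obstacle I foresee is the $q$--term with the nonsymmetric--looking weight $a(x,y)$: one must verify that the Dyda--Kassmann inequality is preserved after multiplying by $a(x,y)$ (which is fine since $a\ge 0$ and symmetric), and carefully keep track of the cutoff exponent $\psi$ versus $\psi^{q/p}$ when combining the $p$-- and $q$--gradient terms, since these do not scale the same way; the choice $\psi^q$ as the multiplier in the test function is exactly what makes both the $\psi^{q/p}$ and $\psi$ cutoffs naturally appear with the correct power on the left. A secondary technicality is justifying the passage to the limit in the regularization, which requires Lebesgue's dominated convergence theorem together with the assumed local boundedness of $u$ and the tail integrability.
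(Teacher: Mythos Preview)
Your proposal is correct and follows essentially the same strategy as the paper: time-mollify the test function $w_+\psi^q\eta^2$, split the nonlocal form into the $B_r\times B_r$ part and the tail, apply the pointwise algebraic inequality (the paper cites \cite[Lemma~3.2]{JDSacv}) with cutoffs $\psi_\ell=\psi^{q/\ell}$, estimate the tail via the sign of $[w(x)-w(y)]^{\ell-1}w_+(x)$, and pass to the limit $\vep\to 0$. One point to make precise: integration by parts in $t$ yields $\tfrac12\int_{B_r} w_+^2(\cdot,T_3)\psi^q$ at a \emph{fixed} upper endpoint $T_3$, not the supremum directly; as in the paper, you recover the $\sup_{t_2<t<t_3}$ term by first choosing $T_3\in(t_2,t_3]$ to realize the sup, then separately taking $T_3=t_3$ to capture the full time integral of the seminorms, and summing the two inequalities.
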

\begin{proof}
	First we assume $t_3<t_1$ and take 
	\begin{align*}
		0<\e<\frac{\e_0}{2}:=\frac{1}{4}\min\{t_2-\tau-t_0,t_1-t_3,t_3-t_2+\tau\}.
	\end{align*}
	Set $\hat t_2:=t_2-\tau-\e_0$ and $\hat t_3:=t_3+\e_0$.
	Let $\phi\in \mc Y((\hat t_2,\hat t_3);\mc W(B_r))\cap C^1((\hat t_2,\hat t_3); L^2(B_r))$ be such that its spatial support is compactly contained in $B_r$. Let $\phi^\e$ be the regularization of $\phi$, as defined in Definition \ref{regtst}. Thus, from \eqref{eqWF} (for sub-solution), we have 
	\begin{align}\label{eq20}
		&-\int_{J}\int_{B_r} u(x,t)\pa_t \phi^\e(x,t)dxdt \nonumber\\
		&\quad+ \int_{J} \iint_{\mb R^{2N}} \frac{[u(x,t)-u(y,t)]^{p-1}}{|x-y|^{N+s_1p}}(\phi^\e(x,t)-\phi^\e(y,t))dxdydt \nonumber\\
		&\quad+\int_{J} \iint_{\mb R^{2N}}a(x,y) \frac{[u(x,t)-u(y,t)]^{q-1}}{|x-y|^{N+s_2q}}(\phi^\e(x,t)-\phi^\e(y,t))dxdydt \nonumber \\
		&\leq \int_{B_r} u(x,T_2) \phi^\e(x,T_2)dx- \int_{B_r} u(x,T_3) \phi^\e(x,T_3)dx \nonumber\\
		&\quad+  \int_{J} \int_{B_r}f(x,t)\phi^\e(x,t)dx dt,
	\end{align} 
	where $J=[T_2,T_3]$ with $T_2=t_2-\tau$ and $T_3\in (t_2,t_3]$ to be determined later (note that $T_2-\e$ and $T_3+\e\in (\hat t_2,\hat t_3)$ for all $\e<\e_0/2$). Then, by the properties of convolution, Fubini's Theorem and integration by parts, it follows that
	\begin{align*}
		&-\int_{J}\int_{B_r} u(x,t)\pa_t \phi^\e(x,t)dxdt\\
		&= \int_{T_2+\frac{\e}{2}}^{T_3-\frac{\e}{2}} \int_{B_r}\pa_t u^\e(x,t)\phi(x,t)dxdt+ \Upsilon(\e) \\
		&\ - \int_{B_r} u^\e(x,T_3-\frac{\e}{2}) \phi(x,T_3-\frac{\e}{2})dx+ \int_{B_r} u^\e(x,T_2+\frac{\e}{2}) \phi(x,T_2+\frac{\e}{2})dx,
	\end{align*}
	where 
	\begin{align*}
		\Upsilon(\e):=&-\int_{B_r} \int_{T_2-\frac{\e}{2}}^{T_2+\frac{\e}{2}}\Bigg(\frac{1}{\e}\int_{T_2}^{\sg+\frac{\e}{2}}u(x,t)\varsigma(\frac{\sg-t}{\e})dt \Bigg)\pa_\sg\phi(x,\sg)d\sg dx \\
		&-\int_{B_r} \int_{T_3-\frac{\e}{2}}^{T_3+\frac{\e}{2}}\Bigg(\frac{1}{\e}\int_{\sg-\frac{\e}{2}}^{T_3}u(x,t)\varsigma(\frac{\sg-t}{\e})dt \Bigg)\pa_\sg\phi(x,\sg)d\sg dx.
	\end{align*}
	Using this in \eqref{eq20}, we get 
	\begin{align*}
		&\int_{T_2+\frac{\e}{2}}^{T_3-\frac{\e}{2}} \int_{B_r}\pa_t u^\e(x,t)\phi(x,t)dxdt \nonumber\\
		&+ \int_{J} \iint_{\mb R^{2N}} \frac{[u(x,t)-u(y,t)]^{p-1}}{|x-y|^{N+s_1p}}(\phi^\e(x,t)-\phi^\e(y,t))dxdydt \nonumber\\
		&+\int_{J} \iint_{\mb R^{2N}}a(x,y) \frac{[u(x,t)-u(y,t)]^{q-1}}{|x-y|^{N+s_2q}}(\phi^\e(x,t)-\phi^\e(y,t))dxdydt + \Upsilon(\e)\nonumber \\
		&\leq \int_{B_r} u(x,T_2) \phi^\e(x,T_2)dx- \int_{B_r} u(x,T_3) \phi^\e(x,T_3)dx \nonumber\\
		& \ +  \int_{J} \int_{B_r}f(x,t)\phi^\e(x,t)dx dt 
		 +\int_{B_r} u^\e(x,T_3-\frac{\e}{2}) \phi(x,T_3-\frac{\e}{2})dx \nonumber\\
		 &\ - \int_{B_r} u^\e(x,T_2+\frac{\e}{2}) \phi(x,T_2+\frac{\e}{2})dx.
	\end{align*}
	We take $\phi(x,t)=v^\e(x,t)\psi^q(x)\eta^2(t)$ in the above equation, where $v(x,t):=(u^\e-k)_+(x,t)$. Thus, for $(\el,s)\in \{(p,s_1),(q,s_2)\}$,
	\begin{align*}
		&\underbrace{\int_{T_2+\frac{\e}{2}}^{T_3-\frac{\e}{2}} \int_{B_r}\pa_t (u^\e(x,t)-k)_+(v^\e\psi^q\eta^2)(x,t)dxdt}_{\mf I_1^\e}+ \Upsilon(\e) \nonumber\\
		&+\sum_{(\el,s)} \underbrace{\begin{aligned} \int_{J}\int_{B_r}\int_{B_r} &a_\el(x,y) \frac{[w(x,t)-w(y,t)]^{\el-1}}{|x-y|^{N+s\el}}\\
		 &\times((v^\e\psi^q\eta^2)^\e(x,t)-(v^\e\psi^q\eta^2)^\e(y,t))dxdydt\end{aligned}}_{\mf I_2^\e(\el)} \nonumber\\
		&+2\sum_{(\el,s)} \underbrace{\int_{J} \int_{\mb R^N\setminus B_r}\int_{B_r} a_\el(x,y) \frac{[w(x,t)-w(y,t)]^{\el-1}}{|x-y|^{N+s\el}}(v^\e\psi^q\eta^2)^\e(x,t)dxdydt}_{\mf I_3^\e(\el)} \\
		&\leq  \underbrace{\int_{J} \int_{B_r}f(x,t)(v^\e\psi^q\eta^2)^\e(x,t)dx dt}_{\mf{I}_4^\e} \nonumber\\
		& + \underbrace{\int_{B_r} u^\e(x,T_3-\frac{\e}{2}) (v^\e\psi^q\eta^2)(x,T_3-\frac{\e}{2})dx - \int_{B_r} u(x,T_3) (v^\e\psi^q\eta^2)^\e(x,T_3)dx}_{\mf{I}_5^\e} \nonumber\\
		& +\underbrace{\int_{B_r} u(x,T_2) (v^\e\psi^q\eta^2)^\e(x,T_2)dx- \int_{B_r} u^\e(x,T_2+\frac{\e}{2}) (v^\e\psi^q\eta^2)(x,T_2+\frac{\e}{2})dx}_{\mf{I}_6^\e}.
	\end{align*}
	An integration by parts formula yields
	\begin{align*}
		\mf I_1^\e=& \frac{1}{2}\int_{B_r}(u^\e(x,T_3-\frac{\e}{2})-k)_+^2 \psi^q(x)\eta^2(T_3-\frac{\e}{2})dx \\
		&- \frac{1}{2} \int_{B_r}(u^\e(x,T_2+\frac{\e}{2})-k)_+^2 \psi^q(x)\eta^2(T_2+\frac{\e}{2})dx \\
		&-\int_{T_2+\frac{\e}{2}}^{T_3-\frac{\e}{2}} \int_{B_r} (u^\e(x,t)-k)_+^2\psi^q(x)\eta(t)\pa_t\eta(t)dxdt.
	\end{align*}
	Noting $u\in C([\hat t_2,\hat t_3];L^2(B_r))$, we pass through the limit as $\e\to 0$ in the above equation to get 
	\begin{align*}
		\mf I_1^\e\to &\mf I_1\nonumber\\
		&:=\frac{1}{2}\int_{B_r}w_+(x,T_3)^2 \psi^q(x)\eta^2(T_3)dx 
		- \frac{1}{2} \int_{B_r}w_+(x,T_2)^2 \psi^q(x)\eta^2(T_2)dx \nonumber\\
		&\quad-\int_{J} \int_{B_r} w_+(x,t)^2\psi^q(x)\eta(t)\pa_t\eta(t)dxdt.
	\end{align*}
	To estimate $\mf I_2^\e(\el)$, using some elementary properties of convolution, we can prove that the sequence $\{ (v^\e\eta^2)^\e\psi^q\}_{\e}$ is bounded in $\mc Y(J;\mc W(B_r))$. Indeed,
	\begin{align*}
		\int_{J} [(v^\e\eta^2)^\e\psi^q]^\el_{W^{s,\el}_{a_\el}(B_r)}dt &\leq C\|\psi\|_{L^\infty}^{q\el}\int_{J} [(v^\e\eta^2)^\e]^\el_{W^{s,\el}_{a_\el}(B_r)}dt \\
		&\quad+ C\|\na\psi\|^{\el}_{L^\infty} \|\psi\|^{(q-1)\el}_{L^\infty} \int_{J} \|(v^\e\eta^2)^\e\|^\el_{L^{\el}(a_\el,B_r)}dt
	\end{align*} 
	and then we use the uniform $L^\infty$ bound on $v^\e$.
	Therefore, by the uniform convexity of the space $\mc Y(J;\mc W(B_r))$, up to a subsequence, we get
	\begin{align*}
		(v^\e\psi^q\eta^2)^\e \rightharpoonup (u-k)_+\psi^q\eta^2 \quad\mbox{weakly in }\mc Y(J;\mc W(B_r)),
	\end{align*}
	which implies that
	\begin{align*}
		a_\el(x,y)^{1/\el}\frac{(v^\e\psi^q\eta^2)^\e(x,t)-(v^\e\psi^q\eta^2)^\e(y,t)}{|x-y|^{N/\el+s}}
	\end{align*}
	converges weakly in $L^\el(J;L^\el(B_r\times B_r))$. Moreover, on account of $u\in \mc Y(J;\mc W(B_r))$, we have
	\begin{align*}
		a_\el^{1-1/\el}(x,y)[w(x,t)-w(y,t)]^{\el-1} |x-y|^{1-1/\el}\in L^\frac{\el}{\el-1}(J;L^\frac{\el}{\el-1}(B_r\times B_r)).
	\end{align*}
	Thus, we get that $\mf I_2^\e(\el)\to \mf I_2(\el)$, as $\e\to 0$, where
	\begin{align*}
		\mf I_2(\el)
		&= \int_{J}\int_{B_r}\int_{B_r} a_\el(x,y) \frac{[w(x,t)-w(y,t)]^{\el-1}}{|x-y|^{N+s\el}}\\
		&\qquad\quad\times(w_+(x,t)\psi^q(x)-w_+(y,t)\psi^q(y))\eta^2(t)dxdydt.
	\end{align*}
	For $\mf I_3^\e$, we proceed similarly by taking into account boundedness of the sequence $\{ (v^\e\eta^2)^\e\psi^q\}_{\e}$ in the space $L^{p-1}(J;L^{p-1}_{ps_1}(\mb R^N)) \cap L^{q-1}(J;L^{q-1}_{qs_2,a}(\mb R^N))$ instead of $\mc Y(J;\mc W(B_r))$. Consequently,
	\begin{align*}
		\mf I_3^\e(\el)\to \mf I_3(\el) 
		:=\int_{J}\int_{\mb R^N\setminus B_r}\int_{B_r}& a_\el(x,y) \frac{[w(x,t)-w(y,t)]^{\el-1}}{|x-y|^{N+s\el}}\\
		&\times w_+(x,t)\psi^q(x)\eta^2(t)dxdydt.
	\end{align*}
	By using similar reasoning, we can prove that $\Upsilon(\e)\to 0$, as $\e\to 0$. Further, on account of properties of convolution, we get $\mf I_5^\e\to 0$ and $\mf I_6^\e\to 0$ as $\e\to 0$. Additionally, 
	\begin{align*}
		\mf I_4^\e\to \mf I_4:=\int_{J} \int_{B_r}f(x,t)w_+(x,t)\psi^q(x)\eta^2(t)dx dt.
	\end{align*}
	Therefore,
	\begin{align}\label{eq21}
		\mf I_1+\mf I_2+\mf I_3\leq \mf I_4.
	\end{align}
	\textit{Step 1}: Estimate of $\mf I_1$. Noting the assumption that $\eta(T_2)=0$ and $\eta\equiv 1$ in $[t_2,t_3]$, we get 
	\begin{align}\label{eq22}
		\mf I_1=\frac{1}{2}\int_{B_r}w_+(x,T_3)^2 \psi^q(x)dx 
		-\int_{T_2}^{T_3} \int_{B_r} w_+(x,t)^2\psi^q(x)\eta(t)\pa_t\eta(t)dxdt.
	\end{align}
	\textit{Step 2}: Estimate of $\mf I_2(\el)$. Setting $\psi_\el=\psi^{q/\el}$, we proceed exactly as in \cite[Lemma 3.2]{JDSacv} to get 
	\begin{align}\label{eq23}
		&\mf I_2(\el)\nonumber\\
		&\geq \frac{1}{4}\int_{T_2}^{T_3}\int_{B_r} \int_{B_r}a_\el(x,y) \frac{|w_+(x,t)-w_+(y,t)|^\el}{|x-y|^{N+s\el}}\eta^2(t)(\psi_\el(x)+\psi_\el(y))^\el dxdydt \nonumber\\
		& \ -C\int_{T_2}^{T_3}\int_{B_r} \int_{B_r}a_\el(x,y) \frac{|\psi_\el(x)-\psi_\el(y)|^\el}{|x-y|^{N+s\el}}(w_+(x,t)+w_+(y,t))^\el\eta^2(t)dxdydt.
	\end{align}
	\textit{Step 3}: Estimate of $\mf I_3(\el)$. Observe that
	\begin{align*}
		[w(x,t)-w(y,t)]^{\el-1}w_+(x,t)\geq -w_+(y,t)^{\el-1}w_+(x,t).
	\end{align*}
	Thus, from the definition of $T_2$, we deduce that 
	\begin{align}\label{eq24}
		\mf I_3(\el)\geq &-C  \sup_{\substack{t_2-\tau<t<T_3 \\ x\in supp(\psi)}} \Bigg( \int_{\mb R^N\setminus B_r}a_\el(x,y) \frac{w_+(y,t)^{\el-1}dy}{|x-y|^{N+s\el}}\Bigg) \nonumber\\
		&\quad\times \int_{t_2-\tau}^{T_3}\int_{B_r}w_+(x,t)\psi^p(x)\eta^2(t)dxdt.
	\end{align}
	Now, 
	noting 
	\begin{align*}
		&|w_+(x,t)\psi_\el(x)-w_+(y,t)\psi_\el(y)|^\el \nonumber\\
		&\leq 2^{\el-1}|w_+(x,t)-w_+(y,t)|^\el (\psi_\el(x)^\el+\psi_\el(y)^\el) \\
		&\ +2^{\el-1}(w_+(x,t)^\el+w_+(y,t)^\el) |\psi_\el(x)-\psi_\el(y)|^\el
	\end{align*}
	and combining \eqref{eq22}, \eqref{eq23} and \eqref{eq24} in \eqref{eq21}, we get 
	\begin{align}\label{eq30}
		&\int_{B_r}w_+^2(x,T_3)\psi^p(x)dx \nonumber\\
		&+\int_{t_2-\tau}^{T_3}\int_{B_r} \int_{B_r} \frac{|w_+(x,t)\psi_p(x)-w_+(y,t)\psi_p(y)|^p}{|x-y|^{N+s_1p}}\eta^2(t)dxdydt \nonumber\\ 
		&+\int_{t_2-\tau}^{T_3}\int_{B_r} \int_{B_r}a(x,y) \frac{|w_+(x,t)\psi(x)-w_+(y,t)\psi(y)|^q}{|x-y|^{N+s_2q}}\eta^2(t)dxdydt \nonumber\\
		&\leq C \sum_{(\el,s)} \int_{t_2-\tau}^{t_3}\int_{B_r}\int_{B_r} a_\el(x,y) \frac{|\psi_\el(x)-\psi_\el(y)|^\el}{|x-y|^{N+s\el}}(w_+(x,t)+w_+(y,t))^\el dxdydt \nonumber\\
		&\ + C   \sum_{(\el,s)} \sup_{\substack{t_2-\tau<t<t_3 \\ x\in supp(\psi)}} \Bigg( \int_{\mb R^N\setminus B_r}a_\el(x,y) \frac{w_+(y,t)^{\el-1}dy}{|x-y|^{N+s\el}}\Bigg) \nonumber\\
	   &\qquad\qquad\times \int_{t_2-\tau}^{t_3}\int_{B_r}w_+(x,t)\psi^q(x)\eta^2(t)dxdt \nonumber\\
		& \ + C \int_{t_2-\tau}^{t_3}\int_{B_r} |f(x,t)| w_+(x,t)\psi^q(x)\eta^2(t)dxdt \nonumber\\
		& \ +C \int_{t_2-\tau}^{t_3} \int_{B_r}w_+^2(x,t)\psi^q(x)\eta(t)|\pa_t\eta(t)|dxdt.
	\end{align}
	To get the required result of the Lemma, we take $T_3\in (t_2,t_3]$  with the property
	\begin{align*}
		\int_{B_r}w_+^2(x,T_3)\psi^p(x)dx=\sup_{t_2<t<t_3} \int_{B_r}w_+^2(x,t)\psi^p(x)dx
	\end{align*}
	and once again we take $T_3=t_3$,  then we sum the resulting inequalities. This proves the Lemma for $t_3<t_1$. Note that the constant $C$ appearing in \eqref{eq30} does not depend on $t_3$. Therefore, by following an approximation argument much in the spirit of \cite[Proposition 4.1, Step 7]{brascoP}, we get a similar result for the case $t_3=t_1$. This completes the proof of the lemma.
\end{proof}

\begin{Corollary}\label{corcaccpp}
	Suppose that $qs_2\leq ps_1$ and $R\in (0,1)$. Let $u$ be a local weak solution to 
	\begin{align*}
		\pa_t u+\mc L u=f \quad\mbox{in }B_2\times (-2R^{ps_1},0]
	\end{align*}
	with $f\in L^\infty(B_1\times (-R^{ps_1},0])$ such that $u\in L^\infty(\mb R^N\times (-R^{ps_1},0])$. Then,
	\begin{align*}
		&\Big(R^{-N} \int_{-\frac{7}{8}R^{s_1p}}^{0} [u]_{W^{s_1,p}(B_R)}^p dt \Big)^\frac{1}{q} \nonumber\\
		&\leq C\big(\|u\|_{L^\infty(\mb R^N\times[-R^{s_1p},0])}+ 1+\|f\|_{L^\infty(B_1\times[-R^{s_1p},0])}\big),
	\end{align*}
	where $C=C(N,s_1,s_2,p,\|a\|_\infty)>0$ is a constant.
\end{Corollary}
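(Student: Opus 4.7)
The plan is to apply Lemma \ref{cacciopp} with a suitable choice of truncation level $k$ and cutoffs, and then bound each of the four terms on its right-hand side by a common quantity $C M^q R^N$, where
\[
M := \|u\|_{L^\infty(\mb R^N\times[-R^{s_1p},0])}+1+\|f\|_{L^\infty(B_1\times[-R^{s_1p},0])}^{1/(p-1)}.
\]
Choosing $k:=-\|u\|_{L^\infty(\mb R^N\times[-R^{s_1p},0])}-1$ makes $w_+:=(u-k)_+$ coincide with $u-k$ pointwise, with $1\leq w_+\leq 2M$ on $\mb R^N\times[-R^{s_1p},0]$, and $[w_+(\cdot,t)]_{W^{s_1,p}(B_R)}=[u(\cdot,t)]_{W^{s_1,p}(B_R)}$. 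I take $\psi\in C_c^\infty(B_{3R/2})$ with $\psi\equiv 1$ on $B_R$, $\operatorname{supp}\psi\subset B_{5R/4}$, $0\leq\psi\leq 1$, $|\na\psi|\leq C/R$, and $\eta\in C^\infty(\mb R)$ with $\eta\equiv 0$ for $t\leq -R^{s_1p}$, $\eta\equiv 1$ for $t\geq -\tfrac{7}{8}R^{s_1p}$, $|\pa_t\eta|\leq C/R^{s_1p}$. Setting $t_2=-\tfrac{7}{8}R^{s_1p}$, $t_3=0$, $\tau=\tfrac{1}{8}R^{s_1p}$ ensures $[t_2-\tau,t_3]\subset(-2R^{s_1p},0]$ and $B_{3R/2}\Subset B_2$, so Lemma \ref{cacciopp} applies to the sub-solution $u$. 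On the left-hand side, restricting the $p$-term to $B_R\times B_R\times[-\tfrac{7}{8}R^{s_1p},0]$ (where $\psi^{q/p}\equiv 1$ and $\eta\equiv 1$) yields the lower bound $\int_{-7R^{s_1p}/8}^0[u(\cdot,t)]_{W^{s_1,p}(B_R)}^p\,dt$.

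For the first (cutoff) term on the RHS, the standard estimate $\int_{B_r}\int_{B_r}|\psi(x)-\psi(y)|^\el|x-y|^{-N-s\el}\,dxdy\leq CR^{N-s\el}$ (with $r\sim R$), combined with $w_+\leq 2M$ and a time integration of length $R^{s_1p}$, gives a contribution $\lesssim M^\el R^{N+s_1p-s\el}$. The case $(\el,s)=(p,s_1)$ gives $M^p R^N$, while $(\el,s)=(q,s_2)$ gives $M^q R^{N+(s_1p-qs_2)}\leq M^q R^N$ by the hypothesis $qs_2\leq s_1p$ and $R<1$. For the tail term, since $\operatorname{supp}\psi\subset B_{5R/4}$ and $y\in\mb R^N\setminus B_{3R/2}$ imply $|x-y|\geq R/4$, the inner integral is bounded by $C\|a\|_\infty M^{\el-1}R^{-s\el}$; multiplied by $\int\int_{B_r}w_+\psi^q\eta^2\,dxdt\lesssim MR^N R^{s_1p}$, this again produces $\lesssim M^\el R^{N+s_1p-s\el}\lesssim M^q R^N$.

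For the source term, one has $\int\int|f|w_+\psi^q\eta^2\,dxdt\leq \|f\|_\infty\cdot M\cdot CR^N\cdot R^{s_1p}$; since $\|f\|_\infty\leq M^{p-1}$ by construction of $M$ and $R^{s_1p}\leq 1$, this is $\leq CM^pR^N\leq CM^qR^N$. For the time-derivative term, $|\pa_t\eta|\leq C/R^{s_1p}$ supported on an interval of length $R^{s_1p}/8$ gives $\int|\pa_t\eta|\,dt\leq C$, so the term is $\leq CM^2R^N\leq CM^qR^N$, using that $q\geq 2$ in the applied parabolic setting (since $p\geq 2$ and $q\geq p$). Summing the four estimates and dividing by $R^N$ yields $R^{-N}\int_{-7R^{s_1p}/8}^0[u(\cdot,t)]_{W^{s_1,p}(B_R)}^p\,dt\leq CM^q$, and taking the $q$-th root gives the claim.

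The main obstacle I expect is the bookkeeping of the $(q,s_2)$-contributions on the right-hand side of the Caccioppoli inequality, which produce factors $R^{N+s_1p-qs_2}$ rather than the expected $R^N$; here both the assumption $qs_2\leq s_1p$ and the restriction $R<1$ are crucial to absorb the deficit. A secondary point is that the $q$-th power on the right-hand side of the conclusion (as opposed to a $p$-th power) reflects precisely the fact that the $\el=q$ contributions grow as $M^q$, which in turn forces one to bound the dissipation and source terms by $M^q$ (using $M\geq 1$ and $q\geq \max\{p,2\}$).
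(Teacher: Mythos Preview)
Your argument is correct and follows essentially the same approach as the paper: apply the Caccioppoli inequality (Lemma~\ref{cacciopp}) with the time window $[t_2-\tau,t_3]=[-R^{s_1p},0]$, a spatial cutoff equal to $1$ on the target ball, and a level $k$ chosen so that $w_+=u-k\geq 1$ everywhere (the paper achieves this by shifting $u\mapsto u+K_0$ and taking $k=0$, which is the same maneuver). The four right-hand side terms are then bounded exactly as you describe, using $qs_2\leq ps_1$ and $R<1$ to absorb the factor $R^{s_1p-qs_2}$, and the final packaging via $M^q$ (with $M\geq 1$, $q\geq p\geq 2$) matches the paper's $K_0^q$; your $M$ with $\|f\|_\infty^{1/(p-1)}$ is dominated by the paper's $K_0=\|u\|_\infty+1+\|f\|_\infty$ since $\|f\|_\infty^{1/(p-1)}\leq 1+\|f\|_\infty$.
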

\begin{proof}
	Set 
	\begin{align*}
		K_0:=\|u\|_{L^\infty(\mb R^N\times[-R^{s_1p},0])}+ 1+\|f\|_{L^\infty(B_1\times[-R^{s_1p},0])} \quad\mbox{and }\tl u=u+K_0.
	\end{align*}
	Then, $\tl u$ is still a local weak solution and satisfies $\tl u\geq 1$ in $\mb R^N\times [-R^{ps_1},0]$. Now, applying Lemma \ref{cacciopp} for the choice  $t_2=-7R^{s_1p}/8$, $\tau=R^{s_1p}/8$, $t_3=0$ together with $k=0$ and the test functions $\psi\in C^\infty_c(B_{R})$, $\eta\in C_c^\infty(\mb R)$ satisfying 
	\begin{align}\label{eq31}
		\psi\equiv 1 \mbox{ in }B_{3R/4}, \ 0\leq \psi\leq 1, 	\ |\na\psi|\leq CR^{-1};\nonumber\\
		\eta\equiv 1 \mbox{ in }[-7R^{ps_1}/8,0], \ 0\leq \eta\leq 1, \ |\eta^\prime|\leq CR^{-ps_1},
	\end{align}
	we obtain
	\begin{align*}
		&\int_{-\frac{7}{8}R^{s_1p}}^{0} [\tl u]_{W^{s_1,p}(B_{3R/4})}^p dt \nonumber\\
		&\leq C \sum_{(\el,s)} \int_{-R^{s_1p}}^{0}\int_{B_R}\int_{B_R} a_\el(x,y) \frac{|\psi(x)-\psi(y)|^\el}{|x-y|^{N+s\el}}(\tl u(x,t)+\tl u(y,t))^\el dxdydt \nonumber\\
		&\ +C  \sum_{(\el,s)} \sup_{\substack{-R^{s_1p}<t<0 \\ x\in supp(\psi)}} \Bigg( \int_{\mb R^N\setminus B_R}a_\el(x,y) \frac{u(y,t)_+^{\el-1}dy}{|x-y|^{N+s\el}}\Bigg) \\ &\qquad\quad\times\int_{-R^{s_1p}}^{0}\int_{B_R}\tl u(x,t)\psi^\el(x)dxdt \nonumber\\
		& \ + C \int_{-R^{s_1p}}^{0}\int_{B_R} |f(x,t)| \tl u(x,t)\psi^q(x)dxdt \nonumber\\
		& \ +C \int_{-R^{s_1p}}^{0} \int_{B_R}\tl u^2(x,t)\psi^q(x)|\pa_t\eta(t)|dxdt.
	\end{align*}
	Using \eqref{eq31}, it is easy to deduce that
	\begin{align*}
		&\int_{-\frac{7}{8}R^{s_1p}}^{0} [\tl u]_{W^{s_1,p}(B_{3R/4})}^p dt \nonumber\\
		&\leq C\|a\|_\infty\sum_{(\el,s)} \|u\|_{L^\infty(\mb R^N\times[-R^{s_1p},0])}^\el R^N R^{ps_1-\el s} \nonumber\\
		&\quad+CR^{N+ps_1}\|f\|_{L^\infty(B_1\times[-R^{s_1p},0])}+ C R^N \|u\|_{L^\infty(\mb R^N\times[-R^{s_1p},0])}^2 \nonumber\\
		&\leq CK_0^q R^N,
	\end{align*}
	where we have used $qs_2\leq ps_1$. This proves the corollary. 
\end{proof}
\subsection{The case of stationary problems}
We mention some preliminary results for the stationary problem.
We start with the following Caccioppoli type inequality without any non-negativity assumption on the solution. The proof runs along the similar lines of \cite[Lemma 3.2]{JDSacv} (see also \cite[Lemma 4.2]{byun}).
\begin{Lemma}\label{caccpstn}
	Let $u$ be a local weak solution to problem \eqref{probSt} such that either $u\in L^\infty_{{\rm loc}}(\Om)$ or $q\leq p^*_{s_1}$. Then, for $B_r\equiv B_r(x_0)\Subset\Om$ with $r>0$ and for all $\psi\in C_c^\infty(B_r)$ non-negative, we have
	\begin{align*}
		&\int_{B_r} \int_{B_r} \frac{|u(x)\psi^\frac{q}{p}(x)-u(y)\psi^\frac{q}{p}(y)|^p}{|x-y|^{N+s_1p}}dxdy \nonumber\\ 
		&\leq C  \int_{B_r} |f(x)| |u(x)|\psi^q(x)dx \nonumber\\
		&\ +C \sum_{(\el,s)} \int_{B_r}\int_{B_r} a_\el(x,y) \frac{|\psi(x)-\psi(y)|^\el}{|x-y|^{N+s\el}}(|u(x)|+|u(y)|)^\el dxdy \nonumber\\
		&\ + C   \sum_{(\el,s)} \int_{B_r}\int_{\mb R^N\setminus B_r}a_\el(x,y) \frac{|u(y)|^{\el-1}|u(x)|+|u(y)||u(x)|^{\el-1}}{|x-y|^{N+s\el}}\psi^q(x)dydx.
	\end{align*}
\end{Lemma}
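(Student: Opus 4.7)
\textbf{Proof plan for Lemma \ref{caccpstn}.}

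The plan is to test the weak formulation of \eqref{probSt} with $\phi = u\,\psi^{q}$ and carry out the standard ``Caccioppoli on the test function $u\psi^{q}$'' splitting, then reduce to a pointwise algebraic inequality already encoded in \cite[Lemma 3.2]{JDSacv}. Admissibility of $\phi$ as a test function requires $u\psi^{q} \in \mc W(B_r)$; this is exactly where the dichotomy ``$u\in L^{\infty}_{\rm loc}(\Om)$ or $q\leq p^{*}_{s_1}$'' enters. In the bounded case one just uses $|u|\leq \|u\|_{L^\infty(B_r)}$ to pass $u$ into the modular; in the Sobolev case, the fractional Sobolev embedding $W^{s_1,p}(B_r)\hookrightarrow L^{q}(B_r)$ together with $\psi\in C_c^\infty(B_r)$ yields $u\psi^{q}\in W^{s_1,p}(B_r)\cap W^{s_2,q}(B_r)$ with compact support, hence in $\mc W(B_r)$.

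Next, for each $(\el,s)\in\{(p,s_1),(q,s_2)\}$ I would split the double integral over $\mb R^{N}\times\mb R^{N}$ as
\begin{align*}
  \iint_{\mb R^N\times \mb R^N} = \iint_{B_r\times B_r} + 2\iint_{B_r\times(\mb R^N\setminus B_r)},
\end{align*}
using symmetry. On $B_r\times(\mb R^N\setminus B_r)$, only the term $u(x)\psi^{q}(x)$ survives from $\phi(x)-\phi(y)$, and the elementary bound $[a-b]^{\el-1}a \geq -|b|^{\el-1}|a|$ produces precisely the tail contribution on the right-hand side (after symmetrising to get also $|u(y)||u(x)|^{\el-1}$ from the $y\in B_r$, $x\notin B_r$ copy).

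For the local-local piece, which is the core of the argument, I would apply the key pointwise algebraic inequality: setting $\psi_\el:=\psi^{q/\el}$, there exist $c_\el>0$ and $C_\el>0$ such that for all $a,b\in\mb R$,
\begin{align*}
  [a-b]^{\el-1}\bigl(a\psi^{q}(x)-b\psi^{q}(y)\bigr)
  \ \geq\ c_\el\,|a\psi_\el(x)-b\psi_\el(y)|^{\el}
  \ -\ C_\el(|a|+|b|)^{\el}\,|\psi_\el(x)-\psi_\el(y)|^{\el}.
\end{align*}
This is the inequality proven in \cite[Lemma 3.2]{JDSacv}; it is tailored precisely so that the test function $u\psi^{q}$ matches both the $p$-term (through $\psi_p=\psi^{q/p}$, giving exactly the left-hand side of the lemma) and the $q$-term (through $\psi_q=\psi$, yielding a nonnegative $W^{s_2,q}_{a}$-type seminorm which we discard). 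Finally, since $0\leq\psi\leq 1$ (after normalisation), $|\psi_\el(x)-\psi_\el(y)|\lesssim |\psi(x)-\psi(y)|$ for $\el=p$ via concavity of $t\mapsto t^{q/p}$ on $[0,1]$ when $q/p\leq 1$ or via Lipschitzness when $q/p\geq 1$, so the error term matches the stated form. The reaction term is estimated trivially by $|\int f u\psi^{q}|\leq \int |f||u|\psi^{q}$.

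The only genuine obstacle is the admissibility of $\phi=u\psi^{q}$ as a test function when $u\notin L^{\infty}_{\rm loc}$: one has to verify that the nonlocal tail integrals $\int_{\mb R^N\setminus B_r}\! a_\el(x,y)|u(y)|^{\el-1}|x-y|^{-N-s\el}\,dy$ are finite for a.e. $x\in B_r$ (this follows from $u\in L^{q-1}_{qs_2}(\mb R^N)$ in the hypothesis and from $\mc W(B_r)\hookrightarrow W^{s_1,p}(B_r)\hookrightarrow L^{p-1}_{ps_1}(\mb R^N)$-tail estimates) and that the compactly supported $u\psi^{q}$ belongs to $\mc W(B_r)$, for which the Sobolev embedding $q\leq p^{*}_{s_1}$ (or the $L^{\infty}_{\rm loc}$ assumption) is exactly what is needed. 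Once the test function is justified, the algebraic decomposition and the tail estimate combine to deliver the stated inequality, with the $W^{s_2,q}_{a}$-seminorm on the left being dropped since it is non-negative and not claimed in the statement.
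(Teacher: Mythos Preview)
Your plan is correct and is exactly the approach the paper has in mind: it explicitly says the proof ``runs along the similar lines of \cite[Lemma 3.2]{JDSacv}'', i.e.\ test with $\phi=u\psi^{q}$, split into the local--local and local--nonlocal pieces, and invoke the pointwise algebraic inequality to produce the $W^{s_1,p}$-seminorm of $u\psi^{q/p}$ on the left while discarding the non-negative $q$-seminorm. One small remark: your sentence about ``symmetrising to get also $|u(y)||u(x)|^{\el-1}$ from the $y\in B_r$, $x\notin B_r$ copy'' is not quite right---after using symmetry you have only one cross term, and your sharp bound $[a-b]^{\el-1}a\ge -|b|^{\el-1}|a|$ (which is indeed valid) yields only $|u(y)|^{\el-1}|u(x)|$; but this is harmless, since it is a stronger estimate than the one stated in the lemma, and the extra term $|u(y)||u(x)|^{\el-1}$ in the statement is simply a (non-sharp) convenience.
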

Subsequently, following the proof of \cite[Proposition 3.1]{JDSjga}, we have the following local boundedness result (see also \cite[Theorem 1.1]{byun}).
\begin{Lemma}\label{localbdd}
	Suppose that  $q\leq p^*_{s_1}$. Let $u$ be a local weak solution to problem \eqref{probSt} in $\Om$.  Then, $u\in L^\infty_{{\rm loc}}(\Om)$.\\
	Additionally, suppose that  $q< p^*_{s_1}$. Then, for  $B_r\equiv B_r(x_0)\Subset\Om$ with $r\in (0,1)$, there exists a constant $C=C(N,s_1,p,s_2,q,\Bbbk,\|a\|_\infty)>0$, as a non-decreasing function of $\|a\|_\infty$,  such that
	\begin{align*}
		&\sup_{B_{3r/4}} |u| \nonumber\\
		&\leq  C \Big[\Big(\Xint-_{B_{r}}|u(x)|^{\vartheta}dx\Big)^\frac{q}{p\vartheta}+\sum_{(\el,s)} T^\el_{\el s,a_\el}(u;x_0,\frac{3r}{4})^\frac{1}{\el-1}
		+    \|f\|_{L^\Bbbk(B_r)}^\frac{1}{p-1} + 1 \Big],
	\end{align*}	
	where $\vartheta:=\max\{q,p\sg\}$ with  $\sg=\frac{p\Bbbk-(p^*_{s_1})'}{p(\Bbbk-(p^*_{s_1})')}\Big(<\frac{p^*_{s_1}}{p}\Big)$ if $\Bbbk<\infty$, while $\sg=1$ if $\Bbbk=\infty$.
\end{Lemma}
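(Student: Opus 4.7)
My plan is to prove both parts by a De Giorgi/Moser iteration driven by the Caccioppoli inequality of Lemma~\ref{caccpstn}, following the strategy of \cite[Proposition 3.1]{JDSjga} but adapted to accommodate the modulating coefficient $a(\cdot,\cdot)$. For the qualitative statement ($q\leq p^*_{s_1}$ implies $u\in L^\infty_{\rm loc}(\Om)$), I would apply Lemma~\ref{caccpstn} to $w_\pm=(u-k)_\pm$ at a rising level $k$ on a shrinking sequence of concentric balls, and combine the resulting bound on the $p$-Gagliardo seminorm of $w\psi^{q/p}$ with the fractional Sobolev embedding $W^{s_1,p}\hookrightarrow L^{p^*_{s_1}}$; the hypothesis $q\leq p^*_{s_1}$ is exactly what allows the $L^q$-type right-hand side contributions coming from the $q$-growth part to be controlled via this embedding.

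For the quantitative estimate, fix $r\in(0,1)$ with $B_r\equiv B_r(x_0)\Subset\Om$ and introduce, for $j\geq 0$, shrinking radii $r_j:=\tfrac{3r}{4}+\tfrac{r}{4^{j+1}}$, balls $B_j:=B_{r_j}(x_0)$, levels $k_j:=M(1-2^{-j})$ with $M>0$ to be determined, and cutoffs $\psi_j\in C_c^\infty(B_j)$ with $\psi_j\equiv 1$ on $B_{j+1}$ and $|\na\psi_j|\leq C\,4^j/r$. Setting $w_j:=(u-k_j)_+$ and applying Lemma~\ref{caccpstn} on $B_j$ yields an upper bound on the $p$-Gagliardo seminorm of $w_j\psi_j^{q/p}$ broken into three types of contributions: (i) local interaction terms estimated by $C\,(4^j/r)^{s\el}\int_{B_j}w_j^\el\,dx$ for $(\el,s)\in\{(p,s_1),(q,s_2)\}$; (ii) nonlocal tail terms which, since $r_j\geq 3r/4$, can be absorbed into powers of $T^p_{ps_1}(u;x_0,\tfrac{3r}{4})$ and $T^q_{qs_2,a}(u;x_0,\tfrac{3r}{4})$ after factoring out the constant $M$; (iii) the source term, which H\"older's inequality with exponent $\Bbbk>N/(ps_1)$ reduces to $\|f\|_{L^\Bbbk(B_r)}\cdot\|w_j\|_{L^{\Bbbk'}}$.

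Combining the Caccioppoli estimate with the Sobolev embedding $W^{s_1,p}(B_j)\hookrightarrow L^{p^*_{s_1}}(B_j)$ applied to $w_j\psi_j^{q/p}$ and using Chebyshev's inequality on $\{w_{j+1}>0\}\cap B_{j+1}\subset\{w_j>M\,2^{-(j+1)}\}\cap B_j$, I would derive a recursive inequality of the form
\begin{align*}
Y_{j+1}\leq C\,b^j\,M^{-\de}\,Y_j^{1+\de},\qquad Y_j:=\Xint-_{B_j}w_j^\vartheta\,dx,
\end{align*}
for some $\de>0$ and $b>1$ depending on the data. A standard iteration lemma then forces $Y_\infty=0$, hence $u\leq M$ on $B_{3r/4}$, provided $M$ is chosen larger than a fixed multiple of
\begin{align*}
\Big(\Xint-_{B_r}|u|^\vartheta\,dx\Big)^{q/(p\vartheta)}+T^p_{ps_1}(u;x_0,\tfrac{3r}{4})^{1/(p-1)}+T^q_{qs_2,a}(u;x_0,\tfrac{3r}{4})^{1/(q-1)}+\|f\|_{L^\Bbbk(B_r)}^{1/(p-1)}+1.
\end{align*}
The analogous iteration applied to $-u$ produces the matching lower bound.

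The main obstacle is the double-phase mismatch: Lemma~\ref{caccpstn} only controls the $p$-Gagliardo seminorm of the truncated function, whereas its right-hand side contains $L^q$ powers of $u$ from the $q$-growth term together with a $|u|$-factor from the source. The strict inequality $q<p^*_{s_1}$ is precisely what enables Sobolev interpolation to absorb the $q$-integrals, while $\Bbbk>N/(ps_1)$ enables H\"older absorption of the source. The delicate choice $\vartheta=\max\{q,p\sg\}$ with $p\sg<p^*_{s_1}$ balances these two absorption requirements, the alternative $p\sg$ coming from the dual exponent needed in the H\"older treatment of the source term. Finally, the dependence of $C$ on $\|a\|_\infty$ is manifestly non-decreasing, since $a$ enters linearly as a weight in both the $q$-part of the Caccioppoli estimate and in the tail $T^q_{qs_2,a}$.
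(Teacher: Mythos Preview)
Your proposal is correct and follows essentially the same approach as the paper, which simply cites \cite[Proposition 3.1]{JDSjga} (and \cite[Theorem 1.1]{byun}) for the De Giorgi iteration scheme you have spelled out. One small remark: Lemma~\ref{caccpstn} as stated is formulated for $u$ rather than for the truncations $w_\pm=(u-k)_\pm$, so strictly speaking you need the analogous Caccioppoli inequality for $w_\pm$ (obtained by testing the equation with $(u-k)_\pm\psi^q$ instead of $u\psi^q$), which is the version actually used in the references cited.
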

Now, we mention the notion of a weak solution to the exterior data Dirichlet problems and related energy spaces. 
Let $E$ and $E'$ are bounded domains such that $E\Subset E'\subset\mb R^N$. For given $g\in L^{p-1}_{ps_1}(\mb R^N)\cap L^{q-1}_{qs_2,a}(\mb R^N)$, we define
\begin{align*}
	&X_{g,a}(E,E')\\
	&:= \{ v\in \mc W_a(E')\cap L^{p-1}_{ps_1}(\mb R^N)\cap L^{q-1}_{qs_2,a}(\mb R^N) : v=g \mbox{ a.e. in }\mb R^N\setminus E \},
\end{align*}
equipped with the norm of $\mc W_a(E')$. For  $f\in L^{\Bbbk}(E')$ with $\Bbbk>\max\{\frac{N}{ps_1},1\}$ and $g\in \mc W_a(E') \cap L^{p-1}_{s_1p}(\mb R^N) \cap L^{q-1}_{s_2q,a}(\mb R^N)$, consider the problem:
\begin{align}\label{prob extr data}
	\mc Lv=f \quad\mbox{in }E; \quad v=g \quad\mbox{in }\mb R^N\setminus E.
\end{align} 
\begin{Definition}
	A function $v\in X_{g,a}(E,E')$ is said to be a weak solution of problem \eqref{prob extr data} 	if, for all $\phi\in X_{0,a}(E,E')$, 
	\begin{align*}
		&\iint_{\mb R^{2N}} \Bigg(\frac{[v(x)-v(y)]^{p-1}}{|x-y|^{N+s_1p}}+a(x,y) \frac{[v(x)-v(y)]^{q-1}}{|x-y|^{N+s_2q}}\Bigg)(\phi(x)-\phi(y))dxdy \nonumber\\
		&= \int_{E}f(x)\phi(x)dx.
	\end{align*}
\end{Definition}
The existence of a weak solution $v\in X_{g,a}(E,E')$ to problem \eqref{prob extr data} is guaranteed by an analogous result of Theorem \ref{thm exst Appn}.
Again, in the sequel, we will drop the term $a$ from the definition of the space $X_{g,a}$, if the context is clear.
Concerning the boundedness properties of a weak solution $v$ to problem \eqref{prob extr data}, we see that since it also satisfies the same Caccioppoli inequality of Lemma \ref{caccpstn}, the local boundedness result of Lemma \ref{localbdd} holds for $v$, too. 
For the boundary estimates, following the idea of the proof of \cite[Theorem 5]{korvenpaa} and Lemma \ref{localbdd} (note that $v\in\mc W(E')$), we have 
\begin{Lemma}
	Suppose that $q\leq p^*_{s_1}$.  Let $v\in X_{g}(E,E')$ be a weak solution to \eqref{prob extr data}. Additionally, assume that for $x_0\in \partial E$ and $0<r<\min\{1,{\rm dist}(x_0,E')\}$, $f\in L^{\Bbbk}(B_{r}(x_{0}))$ and $g\in L^{\infty}(B_{r}(x_{0}))$. Then, $v\in L^\infty (B_{r/4}(x_0))$. \\
	Furthermore, for the case $q< p^*_{s_1}$, there exists a constant $C>0$ depending only on $N,s_1,s_2,p,q,\Bbbk$ and $\|a\|_\infty$ such that 
	\begin{equation*}
		\begin{aligned}
			&\|v\|_{L^{\infty}(B_{r/4}(x_{0}))} \\
			&\leq  C \Big[\Big(\Xint-_{B_{r}(x_0)}|u(x)|^{\vartheta}dx\Big)^\frac{q}{p\vartheta}+\sum_{(\el,s)} T^\el_{\el s,a_\el}(u;x_0,\frac{r}{4})^\frac{1}{\el-1}
			+    \|f\|_{L^\Bbbk(B_r(x_0))}^\frac{1}{p-1}+1  \Big],
		\end{aligned}
	\end{equation*}
	where $\vartheta$ is given by Lemma \ref{localbdd}.
\end{Lemma}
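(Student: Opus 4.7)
The strategy is to adapt the interior Moser iteration behind Lemma \ref{localbdd} so that it can be carried out at a boundary point, mimicking the approach of \cite[Theorem 5]{korvenpaa}. The central observation is that, since $v\equiv g$ in $\mb R^N\setminus E$ and $g\in L^\infty(B_r(x_0))$, the truncation $w_k:=(v-k)_+$ vanishes identically on $(\mb R^N\setminus E)\cap B_r(x_0)$ as soon as $k\geq M:=\|g\|_{L^\infty(B_r(x_0))}$. Consequently, for any non-negative $\psi\in C_c^\infty(B_r(x_0))$, the test function $w_k\psi^q$ belongs to $X_0(E,E')$ and is admissible in the weak formulation of \eqref{prob extr data}, with no spurious contribution from the Dirichlet datum inside $B_r(x_0)$.

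With this choice of test function, the argument leading to Lemma \ref{caccpstn} produces an analogous Caccioppoli inequality for $w_k$ on every $B_\rho(x_0)\subset B_r(x_0)$; the only modification is that the nonlocal tail integrals over $\mb R^N\setminus B_\rho(x_0)$ now encode the exterior values of $v$, but they remain finite because $v\in L^{p-1}_{ps_1}(\mb R^N)\cap L^{q-1}_{qs_2,a}(\mb R^N)$. I would then reproduce the geometric iteration used in Lemma \ref{localbdd}: choose nested concentric balls $B_{r_j}(x_0)$ with $r_j\searrow r/4$, levels $k_j\nearrow k_\infty$ with $k_0\geq M$, combine the Caccioppoli bound with the fractional Sobolev embedding $W^{s_1,p}\hookrightarrow L^{p^*_{s_1}}$ (which leaves room because $q\leq p^*_{s_1}$), and close with the standard geometric convergence lemma to obtain $(v-k_\infty)_+\equiv 0$ on $B_{r/4}(x_0)$ for a suitable $k_\infty$. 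Applying the same reasoning to $-v$ with $g$ replaced by $-g$ yields the symmetric lower bound, and combining the two gives $v\in L^\infty(B_{r/4}(x_0))$.

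In the quantitative case $q<p^*_{s_1}$ I would track the dependence of $k_\infty$ on the data. The iteration yields a bound of the form
\begin{align*}
k_\infty \leq C\Big[&\Big(\Xint-_{B_r(x_0)}|v|^\vartheta\,dx\Big)^{q/(p\vartheta)}+T^p_{ps_1}(v;x_0,r/4)^{1/(p-1)}\\
&+T^q_{qs_2,a}(v;x_0,r/4)^{1/(q-1)}+\|f\|_{L^\Bbbk(B_r(x_0))}^{1/(p-1)}+M+1\Big],
\end{align*}
and the $M$-contribution is absorbed into the averaged $L^\vartheta$ term (since $|v|\leq M$ on $B_r(x_0)\setminus E$) together with the tails, which already see the exterior datum on $\mb R^N\setminus B_r(x_0)$. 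This produces exactly the form of the stated estimate.

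The principal difficulty is bookkeeping rather than conceptual: one must propagate the two double-phase exponents $p$ and $q$ through the iteration so that the final exponent on the averaged integral comes out as $q/(p\vartheta)$ instead of the cleaner $1/p$ appearing in the single-phase case. This is precisely where the strict inequality $q<p^*_{s_1}$ is used (to leave a margin in the Sobolev embedding) and where the choice of $\vartheta=\max\{q,p\sigma\}$ is forced; it is handled verbatim as in \cite[Proposition 3.1]{JDSjga}. Once this is done, the boundary character of the problem introduces no extra difficulty, because the truncation threshold $M$ fully decouples the boundary contribution from the interior analysis and reduces the estimate to the interior case of Lemma \ref{localbdd}.
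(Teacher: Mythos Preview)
Your proposal is correct and follows essentially the same approach as the paper, which does not give a detailed proof but simply refers to the idea of \cite[Theorem~5]{korvenpaa} combined with Lemma~\ref{localbdd} (noting that $v\in\mc W(E')$). Your sketch makes explicit precisely the mechanism behind that reference: the truncation threshold $k\geq\|g\|_{L^\infty(B_r(x_0))}$ forces $w_k\psi^q\in X_0(E,E')$, after which the interior De Giorgi--Moser iteration of Lemma~\ref{localbdd} runs unchanged.
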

Next, we mention the following H\"older continuity result with unspecified H\"older exponent for local weak solutions to problem \eqref{probSt}.
\begin{Proposition}\label{holdunspstn}
	Suppose that $p\geq 2$ and $qs_2\leq ps_1$. Let $u$ be a locally bounded local weak solution to problem \eqref{probSt} such that $u\in L^{q-1}_{qs_2}(\mb R^N)$. Then, there exists $\al_1>0$ such that $u\in C^{0,\al_1}_{{\rm loc}}(\Om)$. More precisely, if $B_{2R}\equiv B_{2R}(x_0)\Subset\Om$, for some $R\in (0,1)$, then for all $r\in (0,R/2)$, 
	\begin{align*}
		&[u]_{C^{0,\al_1}(B_{r/2})} \nonumber\\
		&\leq \frac{C}{R^{\al_1}} \bigg[ \|u\|_{L^\infty(B_r)}+ \sum_{(\el,s)}T^\el_{\el s,a_\el}(u;x_0,R/2)^\frac{1}{\el-1}
		+    \|f\|_{L^\Bbbk(B_r)}^\frac{1}{p-1}+1
		\bigg],
	\end{align*}
	where  $C$ is a positive constant depending only on $N,s_1,s_2,p,q,\Bbbk$ and $\|a\|_\infty$.
\end{Proposition}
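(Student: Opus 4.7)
The plan is to establish the H\"older regularity via a De Giorgi--Nash--Moser iteration adapted to the nonlocal double phase setting, in the spirit of \cite{fang} for the regime $qs_2\le ps_1$, using Lemma \ref{caccpstn} and Lemma \ref{localbdd} as the basic energy and boundedness inputs. After translation to $x_0=0$, I would normalise by setting
\[
K_0:=\|u\|_{L^\infty(B_R)}+T^p_{ps_1}(u;0,R/2)^{\frac{1}{p-1}}+T^q_{qs_2}(u;0,R/2)^{\frac{1}{q-1}}+\|f\|_{L^\Bbbk(B_R)}^{\frac{1}{p-1}}+1
\]
and working with $v:=u/K_0$, which satisfies an equation of the same form with unit-size data. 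Since $R<1$ and $qs_2\le ps_1$, inside $B_R$ the local $q$-contribution is pointwise dominated by the $p$-contribution via the elementary bound $r^{qs_2-ps_1}\le 1$, while the tail quantities are bounded by $K_0$ raised to suitable powers. All subsequent estimates then become estimates with constants depending only on the structural data.

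The iteration rests on two building blocks, both obtained on an arbitrary sub-ball $B_r\subset B_{R/2}$. First, a \textbf{De Giorgi level-set lemma}: for $w_\pm:=(u-c)_\pm$, if $|\{w_\pm=0\}\cap B_r|\ge \mu|B_r|$ for some $\mu\in(0,1)$, then $\sup_{B_{r/2}} w_\pm$ decreases by a quantitative factor depending on $\mu$ and the normalised tails. This comes from testing Lemma \ref{caccpstn} against truncations $(u-c-k_j)_\pm$ on a nested sequence of radii $r_j\searrow r/2$ and closing the Moser-type iteration with the fractional Sobolev embedding $W^{s_1,p}\hookrightarrow L^{p^*_{s_1}}$; the $q$-term contributes non-negatively on the left and is absorbed on the right. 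Second, a \textbf{nonlocal logarithmic estimate}: testing with $(u+d)^{1-p}\psi^p$ for a non-negative super-solution bounded below by $d>0$ on $B_r$ yields
\[
\iint_{B_{r/2}\times B_{r/2}}\Big|\log\frac{u(x)+d}{u(y)+d}\Big|^p\, d\mu_1(x,y)\le C r^{N-ps_1}\big(1+K_0^{p-1}\big),
\]
modulo controlled tail and source corrections. Combined in the by-now standard fashion (cf.\ \cite{dicastro,cozzi,fang}), these two estimates yield a measure-density improvement and then an oscillation reduction
\[
\mathop{\mathrm{osc}}_{B_{r/4}} u\le \la\,\mathop{\mathrm{osc}}_{B_r}u+C\, r^{\al_1} K_0,\qquad \la\in(0,1),
\]
for some universal $\al_1\in(0,1)$. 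The classical geometric-iteration lemma on dyadic radii then converts this into the asserted H\"older bound on $B_{r/2}$ with constant as stated.

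The principal difficulty lies in the interplay of the two nonlocal kernels in both building blocks. While $qs_2\le ps_1$ together with $R<1$ makes the $q$-term subordinate to the $p$-term \emph{locally}, the $q$-tail lives on all of $\mb R^N$ and may decay slower than the $p$-tail. I would handle this by splitting every $q$-nonlocal integral into a near-diagonal piece, absorbed by the $p$-energy via $r^{qs_2-ps_1}\le 1$, and a far-field piece controlled by $T^q_{qs_2}(u;0,R/2)^{1/(q-1)}$ and Young's inequality with exponents chosen so that the scaling aligns with that of the $p$-terms. A secondary subtlety is the source term: the assumption $\Bbbk>N/(ps_1)$ allows one to absorb $\|f\|_{L^\Bbbk}$ into the iteration through H\"older's inequality combined with the Sobolev embedding into $L^{\Bbbk'}$, which accounts for the exponent $1/(p-1)$ in the final estimate.
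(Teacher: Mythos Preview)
Your approach is correct in outline but differs from the paper's. The paper does \emph{not} run the De Giorgi iteration directly on $u$. Instead it uses a harmonic-replacement (comparison) argument: it introduces the solution $v$ of $\mc Lv=0$ in $B_{3R/2}$ with $v=u$ in $\mb R^N\setminus B_{3R/2}$, and invokes \cite[Lemma 3.5]{fang} as a black box to obtain the Campanato-type decay
\[
\Xint-_{B_\rho}|v-(v)_{\rho}|^p\,dx\le C\Big(\frac{\rho}{R}\Big)^{p\al}\Big(\|v\|_{L^\infty(B_{2\rho})}^p+\sum_{(\el,s)}T^\el_{\el s}(v;x_0,\rho)^{\frac{p}{\el-1}}+1\Big).
\]
The source term $f$ enters only through the elementary energy comparison: testing both equations with $u-v$ and using monotonicity gives $[u-v]_{W^{s_1,p}(\mb R^N)}^{p}\le C R^{N(p/(\Bbbk(p-1))-(N-ps_1)/(N(p-1)))}\|f\|_{L^\Bbbk}^{1/(p-1)}$. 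The tails of $v$ are then estimated in terms of those of $u$ and $\|u-v\|$, and the classical splitting
\[
\Xint-_{B_r}|u-(u)_r|^p\le C\Xint-_{B_r}|u-v|^p+C\Xint-_{B_r}|v-(v)_r|^p
\]
together with Campanato's characterisation of H\"older spaces finishes the proof.

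What each route buys: your direct iteration is self-contained and does not rely on the homogeneous result from \cite{fang}, but you must carry the $f$-contribution and both tails through every step of the level-set lemma, the log estimate, and the measure-shrinking argument. The paper's comparison argument is much shorter because all the hard iteration work is outsourced to the $f=0$ case; the only new ingredient is the three-line energy estimate for $u-v$. Both yield the same qualitative exponent $\al_1$ with constants depending on the same structural data.
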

\begin{proof}
	Let $v\in X_u(B_{3R/2},B_{7R/4})\cap L^{q-1}_{qs_2}(\mb R^N)$ be the unique solution to the problem:
	\begin{equation*}
		\left\{
		\begin{array}{rllll}
			\mc Lv&=0 \quad\mbox{in }B_{3R/2},\\
			v&=u \quad\mbox{in }\mb R^N\setminus B_{3R/2}.
		\end{array}
		\right.
	\end{equation*}
	Then, by a careful inspection of the proof \cite[Lemma 3.5]{fang}, specifically the definition of $\omega(r_0)$, there (see also the proof of \cite[Lemma 5.3]{byun}), for all $\rho<3R/4$, we have
	\begin{align*}
		\Xint-_{B_{\rho}} |v(x)-(v)_{x_0,\rho}|^p dx\leq C \Big(\frac{\rho}{R}\Big)^{p\al} \bigg[ \|v\|_{L^\infty(B_{2\rho})}^{p} +\sum_{(\el,s)} T^\el_{\el s,a_\el}(v;x_0,\rho)^\frac{p}{\el-1} +1\bigg],
	\end{align*}
	where we have further used the relation
	\begin{align*}
		\Xint-_{B_{\rho}} |v(x)-(v)_{x_0,\rho}|^p dx\leq C \Xint-_{B_\rho} ({\rm osc}_{B_\rho}v)^p dx.
	\end{align*}
	Moreover, taking $u-v$ as a test function and applying H\"older's inequality, we can prove that
	\begin{align*}
		[u-v]_{W^{s_1,p}(\mb R^N)}^p\leq C R^{N\big(\frac{p}{\Bbbk(p-1)}-\frac{N-ps_1}{N(p-1)}\big)} \|f\|_{L^\Bbbk(B_{R/2})}^\frac{1}{p-1}.
	\end{align*}
	Consequently, using the local boundedness result of Lemma \ref{localbdd}, the tail terms can be estimated as
	\begin{align*}
		T^\el_{\el s,a_\el}(v;x_0,R/2)^\frac{p}{\el-1}&\leq C T^\el_{\el s,a_\el}(u;x_0,3R/2)^\frac{p}{\el-1} \nonumber\\
		&\quad+C\Big(\Xint-_{B_{3R/2}} |u|^\el dx + \Xint-_{B_{3R/2}} |u-v|^\el dx\Big)^{p/\el}.
	\end{align*}
	Finally, using the relation
	\begin{align*}
		\Xint-_{B_r} |u(x)-(u)_{x_0,r}|^p dx\leq C \Xint-_{B_r} |u-v|^p dx+ C \Xint-_{B_r} |v(x)-(v)_{x_0,r}|^p dx,
	\end{align*}
	we get the required result of the proposition (see \cite[Theorem 3.6]{brascoH} for the further details). 
\end{proof}

\section{Higher interior regularity}	
In this section, we obtain higher space-time H\"older continuity result and prove Theorem \ref{thminh}.
\subsection{Spatial regularity: locally translation invariant case}
We first fix some notations which will be used in this section.
For a measurable function $u:\mb R^N\to \mb R$ and $h\in\mb R^N$, we define 
\begin{align*}
	&u_h(x)=u(x+h), \ \  \de_h u(x)=u_h(x)-u(x)\quad\mbox{and}\\  &\de^2_h u(x)= \de_h(\de_h u(x))= u_{2h}(x)+u(x)-2u_h(x).
\end{align*} 
Now we present a regularization lemma involving the discrete differentials.
\begin{Lemma}\label{auxtest}
	Let $u$ be a local weak solution to problem \eqref{probM} in $B_{2}\times (-2,0]$. Suppose that $u\in L^\infty(E\times [-1,0])$ for all $E\Subset B_2$. Let $\eta\in C^{0,1}_c(B_2)$ be a non-negative function and let $\tau\in C^\infty(\mb R)$ be such that $0\leq \tau\leq 1$, $\tau\equiv 0$ in $(-\infty,T_0]$ and $\tau\equiv 1$ in $[T_1,\infty)$, for some $-1<T_0<T_1<0$. Then, for any locally Lipschitz function $\Psi:\mb R\to \mb R$ and $h\in\mb R^N$ such that $0<|h|<{\rm dist}({\rm supp}\eta,\pa B_2)/4$, we have 
	\begin{align}\label{eqi1}
		&\int_{T_0}^{T_1}\iint_{\mb R^{2N}}  \big([u_h(x,t)-u_h(y,t)]^{p-1}-[u(x,t)-u(y,t)]^{p-1}\big)\nonumber\\
		&\qquad\qquad\qquad\times(\Psi(\de_hu(x,t))\eta^p(x)-\Psi(\de_hu(y,t))\eta^p(y))\tau(t)d\mu_1 dt \nonumber\\ 
		&+\int_{T_0}^{T_1}\iint_{\mb R^{2N}}  \big(a_h(x,y)[u_h(x,t)-u_h(y,t)]^{q-1}-a(x,y)[u(x,t)-u(y,t)]^{p-1}\big)\nonumber\\
		&\qquad\qquad\qquad\times(\Psi(\de_hu(x,t))\eta^p(x)-\Psi(\de_hu(y,t))\eta^p(y))\tau(t)d\mu_2dt\nonumber\\
		&+\int_{B_2}\widetilde\Psi(\de_hu(x,T_1))\eta^p(x) dxdt \nonumber\\
		&=\int_{T_0}^{T_1}\int_{B_2}\widetilde\Psi(\de_hu(x,t))\eta^p(x)\tau^\prime(t) dxdt \nonumber\\ &\quad+\int_{T_0}^{T_1}\int_{B_2}\big(f_h-f\big)(x,t)\Psi(u_h(x,t))dxdt,
	\end{align}
	where $\widetilde\Psi(z)=\int_{0}^{z}\Psi(\sg)d\sg$, $a_h(x,y)=a(x+h,y+h)$ and $f_h(x,t)=f(x+h,t)$.
\end{Lemma}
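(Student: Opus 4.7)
The argument proceeds by translating the equation, subtracting the weak formulations satisfied by $u$ and by its translate $u_h(x,t):=u(x+h,t)$, and then handling the resulting time-derivative term via a Steklov-type regularization.

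First, I would observe that, since $u$ is a local weak solution of \eqref{probM} on $B_2\times(-2,0]$, the change of variable $x\mapsto x+h$ shows that $u_h$ is a local weak solution of $\pa_t u_h + \mc L_{a_h} u_h = f_h$ on $(B_2-h)\times(-2,0]$, where $\mc L_{a_h}$ denotes the operator associated with the shifted modulating coefficient $a_h(x,y):=a(x+h,y+h)$. Since $|h|<\frac{1}{4}\mathrm{dist}(\mathrm{supp}\,\eta,\pa B_2)$, both equations are valid on a common open neighborhood of $\mathrm{supp}\,\eta$. Next, I would take as test function $\phi(x,t):=\Psi(\de_h u(x,t))\,\eta^p(x)\,\tau(t)$. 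Using the local $L^\infty$-bound on $u$, the locally Lipschitz character of $\Psi$, and $\eta\in C^{0,1}_c(B_2)$, one checks that $\phi\in\mc Y(J;\mc W(B_2))$ with compact spatial support in $B_2$, and moreover that $\phi$ is an admissible (spatial) test function in both weak formulations.

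Since $\phi$ is a priori only in $\mc Y$ in time (not $C^1$), I would apply the time regularization $\phi\mapsto\phi^\vep$ from Definition \ref{regtst}, insert $\phi^\vep$ into \eqref{eqWF} separately for $u$ and for $u_h$, and subtract. On the left, this produces a time term $-\int_{T_0}^{T_1}\!\int_{B_2}\de_h u\cdot\pa_t\phi^\vep\,dx\,dt$ together with the spatial nonlocal integrals whose integrands involve exactly the differences $[u_h(x,t)-u_h(y,t)]^{p-1}-[u(x,t)-u(y,t)]^{p-1}$ and $a_h[u_h(x,t)-u_h(y,t)]^{q-1}-a[u(x,t)-u(y,t)]^{q-1}$, paired against $\phi^\vep(x)-\phi^\vep(y)$; on the right, one obtains the source $(f_h-f)\phi^\vep$ plus time-boundary contributions, which vanish due to $\tau(T_0)=0$ on the $T_0$-side and which, on the $T_1$-side, will combine with the time term below.

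The main obstacle is the rigorous treatment of the time term: the test function itself depends on $u$, so one cannot differentiate it pointwise in $t$. Following the scheme of Lemma \ref{cacciopp} (see also \cite{brascoP}), I would use the standard convolution identity to transfer $\pa_t$ onto $\de_h u$, producing $\int \pa_t(\de_h u)^\vep\,\Psi(\de_h u^\vep)\,\eta^p\,\tau\,dx\,dt$ modulo Steklov boundary terms near $T_0,T_1$. Invoking the primitive $\widetilde\Psi(z)=\int_0^z\Psi(\sigma)d\sigma$ (so that $\widetilde\Psi'=\Psi$ a.e.) and applying the chain rule, this expression may be written as $\int \pa_t\widetilde\Psi(\de_h u^\vep)\,\eta^p\,\tau\,dx\,dt$; a further integration by parts in $t$, combined with $\tau(T_0)=0$, $\tau(T_1)=1$ and the regularity $u\in C(J;L^2_{\mathrm{loc}})$, produces the boundary term $\int_{B_2}\widetilde\Psi(\de_h u(x,T_1))\eta^p(x)\,dx$ on the left-hand side and the $\tau'$-term on the right-hand side. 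The passage $\vep\to0$ in the nonlocal spatial integrals follows by dominated convergence, using $u\in\mc Y(J;\mc W_{\mathrm{loc}}(\Om))\cap L^{p-1}(J;L^{p-1}_{ps_1}(\mb R^N))\cap L^{q-1}(J;L^{q-1}_{qs_2,a}(\mb R^N))$ to dominate the kernels in $L^1$ and the strong convergence $\phi^\vep\to\phi$ in the associated norms. Collecting all contributions yields exactly \eqref{eqi1}.
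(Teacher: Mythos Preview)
Your overall strategy---translate the equation, subtract the two weak formulations, test with $\Psi(\de_h u)\eta^p\tau$, regularize in time, and pass to the limit---is the same as the paper's, and the spatial terms are handled correctly. However, there is a genuine gap in your treatment of the time term. You choose $\phi=\Psi(\de_h u)\,\eta^p\,\tau$ with the \emph{unregularized} difference $\de_h u$ inside $\Psi$, and then mollify to $\phi^{\vep}$. After transferring the time derivative via the convolution identity, what you actually obtain is
\[
\int_{T_0}^{T_1}\int_{B_2}\pa_t(\de_h u)^{\vep}\,\Psi(\de_h u)\,\eta^p\,\tau\,dx\,dt,
\]
not the expression with $\Psi\big((\de_h u)^{\vep}\big)$ that you write down. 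Since the argument of $\Psi$ is $\de_h u$ while the quantity being differentiated is $(\de_h u)^{\vep}$, the chain rule does \emph{not} yield $\pa_t\widetilde\Psi\big((\de_h u)^{\vep}\big)$, and you cannot simply pass to the limit in the mismatched product because $\pa_t(\de_h u)^{\vep}$ is not controlled uniformly in $\vep$ in any useful space.

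The paper (following \cite{brascoP}) fixes this by placing the regularization \emph{inside} $\Psi$ from the outset: one tests with $\phi=\Psi(\de_h u^{\vep})\,\eta^p\,\tau_{\vep}$, so that after moving $\pa_t$ onto $\de_h u^{\vep}$ one gets exactly $\int\pa_t(\de_h u^{\vep})\,\Psi(\de_h u^{\vep})\,\eta^p\,\tau_{\vep}=\int\pa_t\widetilde\Psi(\de_h u^{\vep})\,\eta^p\,\tau_{\vep}$, to which integration by parts applies cleanly. This double regularization (one $\vep$ inside $\Psi$, a second from mollifying the full test function) also produces boundary correction terms $\Upsilon_{u_h}(\vep)-\Upsilon_u(\vep)$ near $T_0$ and $T_1$ that must separately be shown to vanish as $\vep\to 0$; your sketch omits these. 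Finally, the limit in the nonlocal integrals is taken in the paper via weak convergence in $\mc Y(J;\mc W(B_{2-2h}))$ (uniform boundedness of the mollified test functions plus duality), rather than dominated convergence as you suggest.
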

\begin{proof}
	We only sketch the proof as it is similar to the one provided in \cite[Lemma 3.3]{brascoP}. Set $J:=[T_0,T_1]\subset (-1,0)$ and let $\phi\in \mc Y(J;\mc W(B_2))\cap C^1(J; L^2(B_2))$ be such that its spatial support is compactly contained in $B_2$. Let $\phi^\e$ be the regularization of $\phi$, as defined in Definition \ref{regtst}, for
	\begin{align*}
		0<\e<\frac{\e_0}{2}:=\frac{1}{4}\min\{T_0+1,-T_1,T_1-T_0\}.
	\end{align*}
	Then, proceeding similar to the proof of \cite[Lemma 3.3]{brascoP}, we have 
	\begin{align}\label{eqi40}
		&\Upsilon_u(\e)+\int_{T_0+\frac{\e}{2}}^{T_1-\frac{\e}{2}} \int_{B_2}\pa_t u^\e(x,t)\phi(x,t)dxdt \nonumber\\
		&+ \int_{J} \iint_{\mb R^{2N}} \frac{[u(x,t)-u(y,t)]^{p-1}}{|x-y|^{N+s_1p}}(\phi^\e(x,t)-\phi^\e(y,t))dxdydt \nonumber\\
		&+\int_{J} \iint_{\mb R^{2N}}a(x,y) \frac{[u(x,t)-u(y,t)]^{q-1}}{|x-y|^{N+s_2q}}(\phi^\e(x,t)-\phi^\e(y,t))dxdydt \nonumber \\
		&= \int_{J} \int_{B_2}f(x,t)\phi^\e(x,t)dx dt +\int_{B_2} u(x,T_0) \phi^\e(x,T_0)dx \nonumber\\
		&- \int_{B_2} u^\e(x,T_0+\frac{\e}{2}) \phi(x,T_0+\frac{\e}{2})dx \nonumber\\
		& \ +\int_{B_2} u^\e(x,T_1-\frac{\e}{2}) \phi(x,T_1-\frac{\e}{2})dx - \int_{B_2} u(x,T_1) \phi^\e(x,T_1)dx,
	\end{align}
	where 
	\begin{align}\label{eqi41}
		\Upsilon_u(\e):=&-\int_{B_2} \Bigg(\frac{1}{\e}\int_{T_0}^{T_0+\e}u(x,t)\varsigma\Big(\frac{T_0-t}{\e}+\frac{1}{2}\Big)dt \Bigg)\phi(x,T_0+\frac{\e}{2}) dx \nonumber\\
		&+\int_{B_2} \int_{-\frac{1}{2}}^{\frac{1}{2}}\Bigg(\int_{-\frac{1}{2}}^{\rho}u(x,\e\rho+T_0-\e\sg)\varsigma^\prime(\sg)d\sg \Bigg)\phi(x,\e\rho+T_0)d\rho dx \nonumber\\
		&+\int_{B_2} \Bigg(\frac{1}{\e}\int_{T_1-\e}^{T_1}u(x,t)\varsigma\Big(\frac{T_1-t}{\e}-\frac{1}{2}\Big)dt \Bigg)\phi(x,T_1-\frac{\e}{2}) dx \nonumber\\
		&-\int_{B_2} \int_{-\frac{1}{2}}^{\frac{1}{2}}\Bigg(\int_{\frac{1}{2}}^{\rho}u(x,\e\rho+T_1-\e\sg)\varsigma^\prime(\sg)d\sg \Bigg)\phi(x,\e\rho+T_1)d\rho dx.
	\end{align}
	Again testing the weak formulation with $\phi_{-h}(x,t):=\phi(x-h,t)$ and changing the variables, we see that \eqref{eqi40} and \eqref{eqi41} hold for $u_h$ with $a(\cdot,\cdot)$ and $f(x,t)$ replaced by $a_h(x,y)$ and  $f_h(x,t)$, respectively. On subtracting the resulting equations and taking 
	\begin{align*}
		\phi=\Psi(u_h^\e-u^\e)\eta^p\tau_\e=\Psi(\de_h u^\e)\eta^p\tau_\e,
	\end{align*}
	where 
	\begin{align*}
		\tau_\e(t)=\tau\Bigg(\frac{T_1-T_0}{T_1-T_0-\e}\Big(t-T_1+\frac{\e}{2}\Big)+T_1\Bigg),
	\end{align*}
	we obtain
	\begin{align}\label{eqi42}
		&\int_{T_0}^{T_1}\iint_{\mb R^{2N}}  \big([u_h(x,t)-u_h(y,t)]^{p-1}-[u(x,t)-u(y,t)]^{p-1}\big) \nonumber\\
		&\qquad\qquad\times\Big(\big(\Psi(\de_hu^\e(x,t))\tau_\e(t)\big)^\e\eta^p(x)-\big(\Psi(\de_hu^\e(y,t))\tau_\e(t)\big)^\e\eta^p(y)\Big) d\mu_1 dt \nonumber\\ 
		&+\int_{T_0}^{T_1}\iint_{\mb R^{2N}}  \big(a_h(x,y)[u_h(x,t)-u_h(y,t)]^{q-1}-a(x,y)[u(x,t)-u(y,t)]^{p-1}\big)\nonumber\\
		&\qquad\qquad\times\Big(\big(\Psi(\de_hu^\e(x,t))\tau_\e(t)\big)^\e\eta^p(x)-\big(\Psi(\de_hu^\e(y,t))\tau_\e(t)\big)^\e\eta^p(y)\Big)d\mu_2dt\nonumber\\
		&+\int_{T_0}^{T_1} \int_{B_2}\pa_t (\de_hu^\e(x,t))\Psi(\de_hu^\e(x,t))\eta^p(x)\tau_\e(t)dxdt+ \big(\Upsilon_{u_h}(\e)-\Upsilon_u(\e)\big) \nonumber\\
		&=  \int_{T_0}^{T_1} \int_{B_2}(f_h-f)(x,t)\big(\Psi(\de_hu^\e(x,t))\tau_\e(t)\big)^\e\eta^p(x)dx dt \nonumber\\
		& \ +\int_{B_2} \de_hu^\e(x,T_1-\frac{\e}{2}) \Psi(\de_hu^\e(x,T_1-\frac{\e}{2})) \eta^p(x)dx\nonumber\\
		& \ -\int_{B_2} \de_hu(x,T_1) \big(\Psi(\de_hu^\e(x,T_1))\big)^\e\eta^p(x)dx.
	\end{align}
	Using the integration by parts formula, the definition of $\widetilde{\Psi}$ and convergence arguments similar to the proof of \cite[Lemma 3.3]{brascoP}, we get, as $\e\to 0$,
	\begin{align*}
		&\int_{T_0}^{T_1} \int_{B_2}\pa_t (\de_hu^\e)\Psi(\de_hu^\e)\eta^p(x)\tau_\e(t)dxdt\nonumber\\
		&=\int_{B_2}\widetilde\Psi(\de_hu^\e(x,T_1))\eta^p(x)dx-\int_{T_0}^{T_1} \int_{B_2} \widetilde\Psi(\de_hu^\e)\eta^p(x)\tau_\e^\prime(t)dxdt	\nonumber\\
		&\to \int_{B_2}\widetilde\Psi(\de_hu(x,T_1))\eta^p dxdt -\int_{T_0}^{T_1}\int_{B_2}\widetilde\Psi(\de_hu)\eta^p\tau^\prime dxdt.
	\end{align*}
	An analogous treatment implies that, as $\e\to 0$, 
	\begin{align*}
		&\int_{T_0}^{T_1} \int_{B_2}(f_h-f)\big(\Psi(\de_hu^\e(x,t))\tau_\e(t)\big)^\e\eta^p(x)dx dt \nonumber\\
		&\to \int_{T_0}^{T_1} \int_{B_2}(f_h-f)\Psi(\de_hu(x,t))\tau(t)\eta^p(x)dx dt. 
	\end{align*}
	Moreover, similar convergence arguments as in \cite[Lemma 3.3]{brascoP} yield that the second term on the right hand side of \eqref{eqi42} converges to $0$ along with
	\begin{align*}
		\Upsilon_{u_h}(\e)-\Upsilon_u(\e)\to 0, \quad\mbox{as }\e\to 0.
	\end{align*}
	Now, to check the convergence of the first two terms on the left side of \eqref{eqi42},  we observe that 
	\begin{align*}
		&\int_{T_0}^{T_1}\iint_{\mb R^{2N}} \big([u_h(x,t)-u_h(y,t)]^{p-1}-[u(x,t)-u(y,t)]^{p-1}\big) \nonumber\\
		&\qquad\quad\quad\times\Big(\big(\Psi(\de_hu^\e(x,t))\tau_\e(t)\big)^\e\eta^p(x)-\big(\Psi(\de_hu^\e(y,t))\tau_\e(t)\big)^\e\eta^p(y)\Big) d\mu_1 dt \nonumber\\ 
		&=\int_{T_0}^{T_1}\iint_{B_{2-2h}\times B_{2-2h}} \big([u_h(x,t)-u_h(y,t)]^{p-1}-[u(x,t)-u(y,t)]^{p-1}\big) \nonumber\\
		&\qquad\quad\quad\times\Big(\big(\Psi(\de_hu^\e(x,t))\tau_\e(t)\big)^\e\eta^p(x)-\big(\Psi(\de_hu^\e(y,t))\tau_\e(t)\big)^\e\eta^p(y)\Big) d\mu_1 dt \nonumber\\ 
		&+ 2\int_{T_0}^{T_1}\iint_{B_{2-2h}\times (\mb R^N\times B_{2-h})} \big([u_h(x,t)-u_h(y,t)]^{p-1}-[u(x,t)-u(y,t)]^{p-1}\big) \nonumber\\
		&\qquad\quad\quad\times\big(\Psi(\de_hu^\e(x,t))\tau_\e(t)\big)^\e\eta^p(x) d\mu_1 dt.
	\end{align*}
	Analogously the second integral on the left side of \eqref{eqi42} is split into two parts. As in \cite[Appendix B]{brascoP}, we obtain that
	the sequence
	$\{ \big(\Psi(\de_hu^\e(\cdot,t))\tau_\e(t)\big)^\e\eta^p \}$ is bounded in $\mc Y(J;\mc W(B_{2-2h}))$ with respect to $\e$. Then, proceeding similar to the proof of convergence of $\mathfrak{I}_2^\e(\el)$ and $\mf I_3^\e(\el)$ in Lemma \ref{cacciopp}, we get the required convergence. 
\end{proof}

\begin{Proposition}\label{propin1}
	Suppose that $p\geq 2$, $qs_2< ps_1+2(1-s_1)$ and the assumption {\rm\textbf{(A.1)}} holds. Let  $u$ be a local weak solution to the following problem:
	\begin{align*}
		\pa_t u+(-\De)_p^{s_1} u+\la(-\De)_{q,a}^{s_2} u =f \quad\mbox{in }B_{2}\times (-2,0],
	\end{align*}
	where $\la>0$ is a parameter and $f\in L^\infty(B_1\times (-1,0])$, such that 
	\begin{align}\label{eqibdu}
		\| u\|_{L^\infty(B_1\times [-1,0])}\leq 1 \quad\mbox{and}\quad \sum_{(\el,s)} T^\el_{\infty,\el s}(u;0,1,-1,0)\leq 1.
	\end{align} 
	Additionally, suppose that  for some $m\geq p$ and $0<h_0<\frac{1}{10}$, we have 
	\begin{align*}
		\int_{T_0}^{T_1}\sup_{0<|h|<h_0} \bigg\| \frac{\de^2_h u(\cdot,t)}{|h|^{s_1}}\bigg\|^m_{L^m(B_{R+4h_0})} dt<\infty,
	\end{align*}
	for $4h_0<R\leq 1 - 5h_0$ and $-1<T_0<T_1\leq 0$. Then, for all $\mu\in (0,T_1-T_0)$, there holds: 
	\begin{align*}
		&\int_{T_0+\mu}^{T_1}\sup_{0<|h|<h_0} \bigg\| \frac{\de^2_h u(\cdot,t)}{|h|^{s_1}}\bigg\|^{m+1}_{L^{m+1}(B_{R-4h_0})} dt \nonumber\\
		&\quad+ \frac{1}{m+3-p} \sup_{0<|h|<h_0} \bigg\| \frac{\de_h u(\cdot,T_1)}{|h|^{\frac{(m+2-p)s_1}{m+3-p}}}\bigg\|^{m+3-p}_{L^{m+3-p}(B_{R-4h_0})} \\
		&\leq C(1+\la+\|f\|_{L^\infty(Q_1)}) \int_{T_0}^{T_1} \left( \sup_{0<|h|<h_0} \bigg\| \frac{\de^2_h u(\cdot,t)}{|h|^{s_1}}\bigg\|^m_{L^m(B_{R+4h_0})} +1 \right)dt,
	\end{align*}
	where $C= C(N,h_0,p,q,m,s_1,\|a\|_\infty)>0$ (which depends inversely on $h_0$). 
\end{Proposition}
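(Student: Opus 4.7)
The strategy is a Moser-type iteration on discrete differences, adapting the scheme of \cite{brascoP} for the fractional $p$-Laplacian to the double-phase setting. I would apply Lemma \ref{auxtest} with test profile $\Psi(z) = [z]^{m+2-p}$, whose primitive is $\widetilde\Psi(z) = |z|^{m+3-p}/(m+3-p)$, together with a Lipschitz spatial cutoff $\eta$ supported in $B_{R+3h_0}$, identically $1$ on $B_{R-3h_0}$, satisfying $|\na \eta| \leq C h_0^{-1}$, and a smooth temporal cutoff $\tau$ that vanishes on $(-\infty, T_0]$, equals $1$ on $[T_0+\mu, T_1]$, with $|\tau'| \leq C\mu^{-1}$. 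The evaluation term on the LHS of \eqref{eqi1} directly produces $\frac{1}{m+3-p}\int \widetilde\Psi(\de_h u(\cdot,T_1))\eta^p\,dx$, which, after normalizing by $|h|^{s_1(m+2-p)}$ and taking $\sup_{0<|h|<h_0}$, matches the boundary piece of the target LHS.

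For the fractional $p$-Laplacian double integral I would invoke the pointwise inequality (valid for $p \geq 2$ and $\beta = m+2-p \geq 2$)
\begin{equation*}
\bigl([A]^{p-1} - [B]^{p-1}\bigr)\bigl([a]^\beta - [b]^\beta\bigr) \geq c_{p,m}\, |a - b|^{m+1},
\end{equation*}
applicable whenever $A - B = a - b$, with $A = u_h(x,t) - u_h(y,t)$, $B = u(x,t) - u(y,t)$, $a = \de_h u(x,t)$, $b = \de_h u(y,t)$. Combined with the identity $\de_h u(x,t) - \de_h u(x+h,t) = -\de^2_h u(x,t)$ and a standard Besov-type reduction from first to second differences, this yields the spatial LHS term $\sup_{0<|h|<h_0}\|\de^2_h u / |h|^{s_1}\|^{m+1}_{L^{m+1}(B_{R-4h_0})}$ after taking $\sup$ in $h$. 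The fractional $(q,a)$-contribution is then split into an interior piece on $B_{R+3h_0} \times B_{R+3h_0}$ and a tail: on the interior, \descref{A.1} forces $a_h(x,y) = a(x,y)$, so the $q$-integrand reduces to a symmetric monotone expression whose leading part is non-negative (hence discardable), while the commutator part involving $|\eta^p(x) - \eta^p(y)|$ is controlled via $|\eta^p(x) - \eta^p(y)| \leq Ch_0^{-1}\min(|x-y|, 2h_0)$ and the integrability of $\min(|x-y|,1)^q|x-y|^{-N-qs_2}$ made possible by $qs_2 < ps_1 + 2(1-s_1)$.

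Finally, the tail parts of both the $p$- and $q$-integrals are bounded using $\|a\|_\infty$, $\|u\|_{L^\infty} \leq 1$, and \eqref{eqibdu}, producing only constants of the declared $(1+\la+\|f\|_\infty)$ type, while the forcing contribution $\int (f_h - f)\Psi(\de_h u)$ is dominated by $2\|f\|_{L^\infty}$ times a finite volume factor. All commutator-type and tail remainders are absorbed via Young's inequality with conjugate exponents $\frac{m+1}{p-1}$ and $\frac{m+1}{m+2-p}$ into the time-integrated quantity $\sup_{|h|<h_0}\|\de^2_h u / |h|^{s_1}\|^{m}_{L^m(B_{R+4h_0})}$, matching the RHS. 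The primary obstacle is the $(q,a)$-term: without \descref{A.1}, an unavoidable residual $(a_h - a)(\ldots)$ would survive that cannot be absorbed by the $p$-energy in the regime $q > p$, and the translation-invariance hypothesis is precisely what sidesteps this obstruction and makes the Moser iteration viable. A secondary technical difficulty is in balancing the conjugate exponents and passing from first-difference to second-difference seminorms via the Besov characterization, both of which follow closely the blueprint of \cite{brascoP}.
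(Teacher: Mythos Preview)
Your high-level strategy matches the paper's: apply Lemma~\ref{auxtest} with a power test function, exploit \descref{A.1} so that the interior $(q,a)$-term becomes a symmetric monotone expression whose leading part is non-negative, and absorb all remainders into the RHS via Young's inequality and Besov-type difference estimates. Where you diverge is in the treatment of the $q$-commutator, and this is a genuine gap.

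After discarding the non-negative leading part of the interior $q$-term, what survives is \emph{not} a single commutator of the type you describe. The pointwise algebra (as in \cite{JDSacv}, Eq.~(3.16), invoked by the paper) leaves a correction
\[
\mb I_{4,q}=\iint_{B_R\times B_R} a(x,y)\,\frac{|\de_h u(x)|^{\ka+1}+|\de_h u(y)|^{\ka+1}}{|h|^{1+\nu\ka}}\big(|u_h(x)-u_h(y)|^{\frac{q-2}{2}}+|u(x)-u(y)|^{\frac{q-2}{2}}\big)^{2}\big|\eta^{\frac{p}{2}}(x)-\eta^{\frac{p}{2}}(y)\big|^{2}\,d\mu_2,
\]
carrying the factor $|u(x)-u(y)|^{q-2}$ that your sketch omits. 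Your proposed control via ``integrability of $\min(|x-y|,1)^q|x-y|^{-N-qs_2}$'' handles only the easier correction (the $q$-analogue of the paper's $\mb I_{4,p}$) and needs merely $s_2<1$. The hypothesis $qs_2<ps_1+2(1-s_1)$ enters precisely in bounding $\mb I_{4,q}$: the paper writes $qs_2\le ps_1+\sg$ with $\sg<2(1-s_1)$, trades $|u(x)-u(y)|^{q-2}$ for $|u(x)-u(y)|^{p-2}$ via boundedness, gains $|x-y|^2$ from the $\eta$-difference squared, and then applies Young's inequality with exponents $\tfrac{m}{p-2}$ and $\tfrac{m}{m-p+2}$ (not your $\tfrac{m+1}{p-1},\tfrac{m+1}{m+2-p}$) to split into $[u]^m_{W^{s_1(p-2-\e)/(p-2),m}}$ plus the target RHS. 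The integrability condition $2-2s_1-\e s_1-\sg>0$ in this split is exactly where $qs_2<ps_1+2(1-s_1)$ is used; without it the inner integral diverges when $qs_2\ge 2$. A secondary but real issue: your pointwise inequality $([A]^{p-1}-[B]^{p-1})([a]^\beta-[b]^\beta)\ge c|a-b|^{m+1}$, though algebraically valid, after integration against $d\mu_1$ yields a seminorm of $\de_h u$ with the wrong differentiability order; the paper instead extracts the $W^{s_1,p}$-seminorm of $[\de_h u]^{(\ka+p-1)/p}\eta$ via the finer pointwise estimate of \cite{brascoH}, Proposition~4.1, and only then converts to second differences---the identity $\de_h u(x)-\de_h u(x+h)=-\de^2_h u(x)$ alone does not produce the exponent $s_1$ on $|h|$ in the conclusion.
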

\begin{proof}
	We first assume that $T_1<0$ and set $r:=R-4h_0$. 
	For $\ka\geq 1$, we take 
	\begin{align*}
		\Psi(z):=[z]^\ka 
	\end{align*}
	in Lemma \ref{auxtest}, together with
	\begin{align*}
		&\eta\equiv 1 \quad\mbox{in }B_r, \ \eta\equiv 0  \quad\mbox{in }B^c_{(R+r)/2}, \ \ 0\leq \eta \leq 1 \quad\mbox{and } |\na\eta|\leq \frac{C}{R-r}=\frac{C}{4h_0}; \\
		&0\leq \tau \leq 1, \quad \tau\equiv 0 \ \mbox{in }(-\infty,T_0], \quad \tau\equiv1 \ \mbox{in }[T_0+\mu,\infty) \quad\mbox{and } |\tau^\prime|\leq C\mu^{-1}.
	\end{align*}
	Dividing \eqref{eqi1} by $|h|^{1+\nu\ka}>0$,  we have 
	\begin{align*}
		&\int_{T_0}^{T_1} \iint_{\mb R^{2N}}  \frac{([u_h(x,t)-u_h(y,t)]^{p-1}-[u(x,t)-u(y,t)]^{p-1})}{|h|^{1+\nu\ka}} \nonumber \\
		&\qquad\qquad\qquad\times\big([\de_hu(x,t)]^\ka \eta^p(x)-[\de_hu(y,t)]^\ka \eta^p(y) \big) \tau(t)d\mu_1 dt\nonumber\\ 
		&+\int_{T_0}^{T_1}\iint_{\mb R^{2N}}  \frac{(a_h(x,y)[u_h(x,t)-u_h(y,t)]^{q-1}-a(x,y)[u(x,t)-u(y,t)]^{q-1})}{|h|^{1+\nu\ka}} \nonumber \\
		&\qquad\qquad\qquad\times \la\big([\de_hu(x,t)]^\ka \eta^p(x)-[\de_hu(y,t)]^\ka \eta^p(y) \big) \tau(t)d\mu_2 dt \nonumber \\
		&+\frac{1}{\ka+1} \int_{B_2} \frac{|\de_h u(x,T_1)|^{\ka+1}}{|h|^{1+\nu\ka}}\eta^p(x) dx  \nonumber\\
		&= \frac{1}{\ka+1} \int_{T_0}^{T_1} \int_{B_2} \frac{|\de_h u(x)|^{\ka+1}}{|h|^{1+\nu\ka}}\eta^p(x) \tau^\prime(t) dx dt \nonumber\\
		& \ 
		+\int_{T_0}^{T_1} \int_{B_2} \de_hf(x,t)\frac{|\de_h u(x)|^{\ka}}{|h|^{1+\nu\ka}}\eta^p(x) \tau(t) dx dt,
	\end{align*} 
	where $\de_hf(x,t)=f(x+h,t)-f(x,t)$. 
	We set the following:
	\begin{align*}
		&\mb I_{1,q}(t):=\la\int_{B_R}\int_{B_R} a(x,y) \frac{([u_h(x,t)-u_h(y,t)]^{q-1}-[u(x,t)-u(y,t)]^{q-1})}{|h|^{1+\nu\ka}} \nonumber\\
		&\qquad\qquad\qquad\times\big([\de_hu(x,t)]^\ka \eta^p(x)-[\de_hu(y,t)]^\ka \eta^p(y) \big) d\mu_2, \\
		&\mb I_{2,q}(t)\nonumber\\
		&:= \int_{B_\frac{R+r}{2}}\int_{B_R^c} \frac{(a_h(x,y)[u_h(x,t)-u_h(y,t)]^{q-1}-a(x,y)[u(x,t)-u(y,t)]^{q-1})}{|h|^{1+\nu\ka}} \nonumber\\
		&\qquad\qquad\qquad\qquad\qquad\times \la[\de_hu(x,t)]^\ka \eta^p(x) d\mu_2, \nonumber\\
		&\mb I_3:=\frac{1}{\ka+1} \int_{T_0}^{T_1} \int_{B_2} \frac{|\de_h u(x,t)|^{\ka+1}}{|h|^{1+\nu\ka}}\eta^p(x) \tau^\prime(t) dx dt, \\
		&\mb I_{f}:= \int_{T_0}^{T_1} \int_{B_2} |\de_hf(x,t)|\frac{|\de_h u(x)|^{\ka}}{|h|^{1+\nu\ka}}\eta^p(x) \tau(t) dx dt
	\end{align*}
	and analogously, $\mb I_{1,p}(t)$ and $\mb I_{2,p}(t)$ are defined by taking $\la=1$ and $a\equiv 1$. Therefore, noting $a(x+h,y+h)=a(x,y)$ in $B_R\times B_R$, we have 
	\begin{align}\label{eqi4}
		&\int_{T_0}^{T_1} \mb I_{1,p}(t)\tau(t) dt + \frac{1}{\ka+1} \int_{B_2} \frac{|\de_h u(x,T_1)|^{\ka+1}}{|h|^{1+\nu\ka}}\eta^p dx \nonumber\\
		&\leq 2\int_{T_0}^{T_1}|\mb I_{2,q}(t)|\tau(t)dt +2\int_{T_0}^{T_1}|\mb I_{2,p}(t)|\tau(t) dt\nonumber \\
		&\quad-\int_{T_0}^{T_1} \mb I_{1,q}(t)\tau(t) dt+ \mb I_3+\mb I_{f}.
	\end{align}
	For notational convenience,  we suppress the $t$-dependence from  $u$ and $u_h$. \\
	\textit{Estimate of $\mb I_{1,p}$}. Proceeding similarly to Step 1 of the proof of \cite[Proposition 4.1]{brascoH}, we obtain
	\begin{align}\label{eqi3}
		\mb I_{1,p}
		&\geq c \bigg[\frac{[\de_hu ]^\frac{\ka+p-1}{p}\eta}{|h|^\frac{1+\nu\ka}{p}}\bigg]^p_{W^{s_1,p}(B_R)} \nonumber\\
		&\ - C \int_{B_R}\int_{B_R} \frac{|\de_hu(x)|^{\ka+p-1}+|\de_hu(y)|^{\ka+p-1}}{|h|^{1+\nu\ka}} |\eta(x)-\eta(y)|^p d\mu_1 \nonumber\\
		&\ -C\int_{B_R} \int_{B_R} \left[ \frac{|\de_hu(x)|^{\ka+1}+|\de_hu(y)|^{\ka+1}}{|h|^{1+\nu\ka}}\big|\eta^\frac{p}{2}(x)-\eta^\frac{p}{2}(y)\big|^2
		 \right. \nonumber\\
		&\qquad\qquad\qquad\left. \times \big( |u_h(x)-u_h(y)|^\frac{p-2}{2}+|u(x)-u(y)|^\frac{p-2}{2} \big)^2 \right]d\mu_1 \nonumber \\
		&=:c \bigg[\frac{[\de_hu ]^\frac{\ka+p-1}{p}\eta}{|h|^\frac{1+\nu\ka}{p}}\bigg]^p_{W^{s_1,p}(B_R)} - \mb I_{4,p}- \mb I_{5,p}. 
	\end{align}
	Moreover, from the proof of \cite[Proposition 3.9]{JDSacv}, we have 
	\begin{align}\label{eqi2}
		&|\mb I_{4,p}|+|\mb I_{5,p}| \nonumber\\
		&\leq C\left(\sup_{0<|h|<h_0}\bigg\| \frac{\de^2_h u}{|h|^{s_1}}\bigg\|^m_{L^m(B_{R+4h_0})} +1 + \int_{B_R}  \frac{|\de_hu(x)|^{\frac{\ka m}{m-p+2}}}{|h|^\frac{(1+\nu\ka)m}{m-p+2}}dx \right).
	\end{align}
	\textit{Estimate of $\mb I_{1,q}$}. Following the proof of \cite[Equ. (3.16)]{JDSacv} (note that the first term is non-negative there), we obtain
	\begin{align}\label{eqi6}
		\mb I_{1,q} &\geq -C\la\int_{B_R}\int_{B_R} a(x,y)\left[ \frac{|\de_hu(x)|^{\ka+1}+|\de_hu(y)|^{\ka+1}}{|h|^{1+\nu\ka}} 
		 \big|\eta^\frac{p}{2}(x)-\eta^\frac{p}{2}(y)\big|^2 \right. \nonumber\\
		& \left.  \qquad\qquad\qquad\times \big( |u_h(x)-u_h(y)|^\frac{q-2}{2}+|u(x)-u(y)|^\frac{q-2}{2} \big)^2  \right]d\mu_2 \nonumber \\
		&=:- C \la\mb I_{4,q}. 
	\end{align} 
	In order to estimate $\mb I_{4,q}$, we set
	\begin{align*}
		\mb {\tl I}_{4,q}:&=\la\int_{B_R} \int_{B_R} a(x,y) \frac{|\de_hu(x)|^{\ka+1}}{|h|^{1+\nu\ka}} |u(x)-u(y)|^{q-2} \big|\eta^\frac{p}{2}(x)-\eta^\frac{p}{2}(y)\big|^2 d\mu_2.
	\end{align*}
	For the case $q=2$, using the bounds on $\eta$ and $\na\eta$, we get 
	\begin{align*}
		\mb {\tl I}_{4,q}&\leq \frac{C(N,s)}{h_0^2} \la\|a\|_{\infty} \|u\|_{L^\infty(B_{R+h_0})} \int_{B_R}\frac{|\de_h u(x)|^\ka}{|h|^{1+\nu\ka}}dx.
	\end{align*}  
	For the case $q>2$, observe that $qs_2\leq ps_1+\sg$, for $\sg<2(1-s_1)$.
	Therefore, employing Young's inequality and the bounds on $u$, $a(\cdot,\cdot)$ and $\eta$, we deduce that
	\begin{align*}
		\mb {\tl I}_{4,q} &\leq CR^{ps_1+\sg-qs_2}\int_{B_R}\int_{B_R} \la a(x,y)  \frac{|u(x)-u(y)|^{q-2}}{|x-y|^{N+s_1p+\sg}} \frac{|\de_hu(x)|^{\ka+1}}{|h|^{1+\nu\ka}} \nonumber\\
		&\qquad\qquad\qquad\times \big|\eta^\frac{p}{2}(x)-\eta^\frac{p}{2}(y)\big|^2 dxdy \\
		&\leq \frac{C\la  \|a\|_{\infty}\|u\|_{L^\infty(B_R)}^{q-p+1}}{h_0^2} \int_{B_R}\int_{B_R}   \frac{|u(x)-u(y)|^{p-2}}{|x-y|^{N+s_1p+\sg-2}} \frac{|\de_hu(x)|^{\ka}}{|h|^{1+\nu\ka}}  dxdy \\
		&\leq  C\int_{B_R}\int_{B_R}   \frac{|u(x)-u(y)|^{m}}{|x-y|^{N+ms_1\frac{(p-2-\e)}{p-2}}}\,dxdy\nonumber\\
		&\quad+ C \int_{B_R}\int_{B_R}   |x-y|^{\frac{m(2-2s_1-\e s_1-\sg)}{m-p+2}-N} \frac{|\de_hu(x)|^{\frac{\ka m}{m-p+2}}}{|h|^\frac{(1+\nu\ka)m}{m-p+2}}\,dxdy \nonumber\\
		&\leq C [u]^m_{W^{\frac{s_1(p-2-\e)}{p-2},m}(B_{R+h_0})} + C  \int_{B_R}  \frac{|\de_hu(x)|^{\frac{\ka m}{m-p+2}}}{|h|^\frac{(1+\nu\ka)m}{m-p+2}}dx, 
	\end{align*}
	where we have used the relation $2-2s_1-\e s_1-\sg>0$, that is, $\e\in (0,\frac{2-\sg}{s_1}-2)$ (thanks to $\sg<2(1-s_1)$) in the last inequality. Note that the right hand side of the above expression is exactly the same as the estimate of $\tl{I}_{11}$, given by (3.20) in \cite{JDSacv}. Thus, similar to \cite[Equ. (3.22)]{JDSacv}, we have
	{\small\begin{align*}
			\mb {\tl I}_{4,q} \leq C \la \Bigg(\sup_{0<|h|<h_0}\bigg\| \frac{\de^2_h u}{|h|^{s_1}}\bigg\|^m_{L^m(B_{R+4h_0})} +1 + \int_{B_R}  \frac{|\de_hu(x)|^{\frac{\ka m}{m-p+2}}}{|h|^\frac{(1+\nu\ka)m}{m-p+2}}dx \Bigg).
	\end{align*}}
	Consequently,  from \eqref{eqi6}, we obtain
	\begin{align}\label{eqi5}
		-\mb I_{1,q}\leq c{\mb I}_{4,q} \leq c \la \Bigg[\sup_{0<|h|<h_0}\bigg\| \frac{\de^2_h u}{|h|^{s_1}}\bigg\|^m_{L^m(B_{R+4h_0})}  + \int_{B_R}  \frac{|\de_hu(x)|^{\frac{\ka m}{m-p+2}}}{|h|^\frac{(1+\nu\ka)m}{m-p+2}}dx+1 \Bigg].
	\end{align}
	\textit{Estimate of $\mb I_{2,\el}$}. Noting  $|u|\leq 1$ on $B_1\times[-1,0]$, we have 
	\begin{align*}
		\big|a_\el[u_h(x)-u_h(y)]^{\el-1}[\de_h u(x)]^{\ka}\big|\leq C\|a_\el\|_\infty (1+|u_h(y)|^{\el-1})|\de_h u(x)|^\ka.
	\end{align*}
	Therefore, on account of \eqref{eqibdu}, we deduce that
	\begin{align*}
		&\int_{B_\frac{R+r}{2}}\int_{B_R^c} \frac{a_\el(x+h,y+h)[u_h(x,t)-u_h(y,t)]^{\el-1}}{|h|^{1+\nu\ka}} [\de_hu(x)]^\ka \eta^p(x) d\mu \nonumber\\
		&\leq C  \int_{B_\frac{R+r}{2}}\frac{|\de_hu(x)|^\ka}{|h|^{1+\nu\ka}}dx \int_{B_R^c} \frac{1+|u_h(y)|^{\el-1}}{|y|^{N+s\el}}dy \nonumber\\
		&\leq  \frac{C}{(R-r)^{s\el}}\Bigg( 1 + \int_{B_R}  \frac{|\de_hu(x)|^{\frac{\ka m}{m-p+2}}}{|h|^\frac{(1+\nu\ka)m}{m-p+2}}dx \Bigg).
	\end{align*}
	Consequently, we have
	\begin{align}\label{eqi10}
		|\mb I_{2,p}|+|\mb I_{2,q}| \leq C(1+\la) \Bigg( 1 + \int_{B_R}  \frac{|\de_hu(x)|^{\frac{\ka m}{m-p+2}}}{|h|^\frac{(1+\nu\ka)m}{m-p+2}}dx \Bigg).
	\end{align}
	\textit{Estimate of $\mb I_3$  and $\mb I_f$}.
	Using \eqref{eqibdu} once again, we get
	\begin{align}\label{eqi11}
		\mb I_3 \leq \frac{C}{\mu} \int_{T_0}^{T_1} \Bigg( 1 + \int_{B_R}  \frac{|\de_hu(x)|^{\frac{\ka m}{m-p+2}}}{|h|^\frac{(1+\nu\ka)m}{m-p+2}}dx \Bigg)dt
	\end{align}
	and noting $f\in L^\infty(B_R\times[T_0,T_1])$, we infer that
	\begin{align}\label{eqif}
		\mb I_{f} &\leq 2\|f\|_{L^\infty(B_R\times[T_0,T_1])} \int_{T_0}^{T_1} \int_{B_R} \frac{|\de_h u(x)|^{\ka}}{|h|^{1+\nu\ka}} dx dt \nonumber\\
		&\leq C \int_{T_0}^{T_1}\Big( 1 + \int_{B_R}  \frac{|\de_hu(x)|^{\frac{\ka m}{m-p+2}}}{|h|^\frac{(1+\nu\ka)m}{m-p+2}}dx \Big)dt.
	\end{align}
	Combining \eqref{eqi3}, \eqref{eqi2}, \eqref{eqi5}, \eqref{eqi10}, \eqref{eqi11} and \eqref{eqif} with \eqref{eqi4}, we obtain
	\begin{align*}
		&\int_{T_0+\mu}^{T_1}\bigg[\frac{[\de_hu ]^\frac{\ka+p-1}{p}\eta}{|h|^\frac{1+\nu\ka}{p}}\bigg]^p_{W^{s_1,p}(B_R)} dt + \frac{1}{\ka+1} \int_{B_R} \frac{|\de_h u(x,T_1)|^{\ka+1}}{|h|^{1+\nu\ka}}\eta^p dx \nonumber\\
		&\leq C_{\la,f} \int_{T_0}^{T_1} \Bigg(\int_{B_R} \frac{|\de_hu(x)|^{\frac{\ka m}{m-p+2}}}{|h|^\frac{(1+\nu\ka)m}{m-p+2}}dx + \sup_{0<|h|<h_0} \bigg\| \frac{\de^2_h u}{|h|^{s_1}}\bigg\|^m_{L^m(B_{R+4h_0})}+1 \Bigg)dt,
	\end{align*}
	where $C_{\la,f}=C(N,p,s_1,h_0,q,m,\|a\|_\infty)(1+\la+\|f\|_{L^\infty(B_1\times(-1,0])})$ is a constant and we have used the fact that $\tau\equiv 1$ on $[T_0+\mu,T_1]$. Then, for the choice of $(1+\nu\ka)/\ka<1$, following the calculation to derive \cite[Equ. (3.35)]{JDSacv}, we get
	\begin{align*}
		&\int_{T_0+\mu}^{T_1} \sup_{0<|h|<h_0} \int_{B_r}   \bigg|\frac{\de^2_hu}{|h|^\frac{1+\nu\ka+s_1p}{\ka-1+p}}\bigg|^{\ka-1+p} dxdt \nonumber\\
		&+ \frac{1}{\ka+1} \sup_{0<|h|<h_0}\int_{B_R} \frac{|\de_h u(x,T_1)|^{\ka+1}}{|h|^{1+\nu\ka}}\eta^p dx \nonumber\\
		&\leq C_{\la,f} \int_{T_0}^{T_1} \int_{B_R}\sup_{0<|h|<h_0}  \Bigg(\bigg|\frac{\de^2_hu}{|h|^\frac{1+\nu\ka}{\ka}}\bigg|^\frac{m\ka}{m-p+2} dx+  \bigg\| \frac{\de^2_h u}{|h|^{s_1}}\bigg\|^m_{L^m(B_{R+4h_0})}+1 \Bigg)dt.	
	\end{align*}
	The above expression is same as \cite[Equ. (4.11)]{brascoP}, thus, from step 6 of the same paper, we deduce that
	\begin{align*}
		&\int_{T_0+\mu}^{T_1} \sup_{0<|h|<h_0} \bigg\| \frac{\de^2_h u}{|h|^{s_1}}\bigg\|^{m+1}_{L^{m+1}(B_{R-4h_0})}dt \nonumber\\
		&\quad+ \frac{1}{m+3-p} \sup_{0<|h|<h_0}\int_{B_{R-4h_0}} \frac{|\de_h u(x,T_1)|^{m+3-p}}{|h|^{(m+2-p)s_1}} dx \nonumber\\
		&\leq C(1+\la+\|f\|_{L^\infty(B_1\times(-1,0])}) \int_{T_0}^{T_1} \Bigg(\sup_{0<|h|<h_0} \bigg\| \frac{\de^2_h u}{|h|^{s_1}}\bigg\|^m_{L^m(B_{R+4h_0})}+1 \Bigg)dt.
	\end{align*}
	For the case $T_1=0$, we use an approximation argument as in step 7 of  \cite[Proposition 4.1]{brascoP}. 
\end{proof}	

\begin{Theorem}\label{thmins}
	Suppose that the function $a(\cdot,\cdot)$ satisfies the assumption {\rm\textbf{(A.1)}} and $qs_2\leq ps_1$. Let $u$ be a local weak solution to problem \eqref{probM} such that  $u\in L^\infty_{{\rm loc}}(\Om\times I)\cap L^{\infty}_{\rm loc}(I;L^{p-1}_{ps_1}(\mb R^N)) \cap L^{q-1}_{\rm loc}(I; L^{q-1}_{qs_2}(\mb R^N))$. 
	Then, for every $\al\in (0,s_1)$, $u\in C^{0,\al}_{x, {\rm loc}}(\Om\times I)$. More precisely, for every $\al\in (0,s_1)$ and $R\in (0,1)$ satisfying $Q_{2R,2R^{s_1p}}\equiv Q_{2R,2R^{s_1p}}(x_0,0)\Subset\Om\times I$, there exists a constant $C=C(N,s_1,p,q,\al,\|a\|_\infty)>0$ such that 
	\begin{align*}
		\sup_{t\in[-\frac{R^{s_1p}}{2},0]} [u(\cdot,t)]_{C^{0,\al}(B_\frac{R}{2})} \leq \frac{C}{R^{\al}} \mc K_R^{(q-p)i_\infty+1},
	\end{align*}
	where $i_\infty\in\mb N$ is such that $i_\infty>\frac{3\al+(N-2)s_1}{s_1-\al}$ and
	\begin{align}\label{K R}
		\mc K_R:=  &\Big[\|u\|_{L^\infty(Q_{R,R^{s_1p}})}+ \Big(\frac{1}{R^{N}} \int_{-\frac{7}{8}R^{s_1p}}^{0} [u(\cdot,t)]_{W^{s_1,p}(B_R)}^p dt \Big)^\frac{1}{p}+1 \nonumber\\
		&\ +\sum_{(\el,s)}T^\el_{\infty,\el s}(u;x_0,R,-R^{s_1p},0)^\frac{1}{\el-1}+\|f\|^\frac{1}{p-1}_{L^\infty(Q_{R,R^{s_1p}})}\Big].
	\end{align}
\end{Theorem}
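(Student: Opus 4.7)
The plan is to establish the H\"older estimate via a Moser-type iteration on the discrete second differences $\de_h^2 u/|h|^{s_1}$, in the spirit of \cite{brascoP,JDSacv}, combined with a preliminary rescaling. Setting $M:=\mc K_R$ and $v(x,t):=u(x_0+Rx,R^{ps_1}M^{2-p}t)/M$, a direct computation shows that $v$ is a local weak solution on $B_2\times(-2,0]$ to
\[
\pa_t v + (-\De)_p^{s_1}v + \la (-\De)_{q,\tl a}^{s_2} v = \tl f,
\]
with $\tl a(x,y):=a(x_0+Rx,x_0+Ry)$ (still locally translation invariant by \descref{A.1}), $\la:=M^{q-p}R^{ps_1-qs_2}\in[1,M^{q-p}]$, and $\|\tl f\|_{L^\infty(B_1\times[-1,0])}\le R^{ps_1}\le 1$. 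The very definition of $\mc K_R$ is engineered so that $\|v\|_{L^\infty(B_1\times[-1,0])}\le 1$ and the tail quantities $T^\el_{\infty,\el s}(v;0,1,-1,0)$ are bounded by $1$, placing $v$ exactly in the setting of Proposition~\ref{propin1}.

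To launch the iteration I would invoke Corollary~\ref{corcaccpp} on $\pm v$, producing
\[
\int_{-7/8}^0 [v(\cdot,t)]_{W^{s_1,p}(B_1)}^p\,dt \le C.
\]
The standard embedding $W^{s_1,p}=B^{s_1}_{p,p}\hookrightarrow B^{s_1}_{p,\infty}$ (controlling second differences uniformly in $|h|$) then supplies the base bound needed to apply Proposition~\ref{propin1} with $m_0=p$. Choosing a telescoping sequence of radii $r_0>r_1>\dots>1/2$ and time endpoints $T_0^{(i)}\nearrow -1/2$, I would apply Proposition~\ref{propin1} at step $i$ with $m_i=p+i$, upgrading $L^{m_i}$-in-space control on $\sup_h \de_h^2 v/|h|^{s_1}$ into $L^{m_i+1}$-in-space control on a slightly smaller domain, and simultaneously producing an instantaneous-in-time $L^{m_i+3-p}$ estimate for $\de_h v(\cdot,T_1)/|h|^{(m_i+2-p)s_1/(m_i+3-p)}$. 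Since the proposition allows any $T_1$, running it with $T_1$ varying over $(-1/2,0]$ yields a uniform-in-$t$ estimate.

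After $i_\infty$ iterations, a Nikolskii-type Morrey embedding converts the final bound on $\sup_h \|\de_h v(\cdot,t)/|h|^{\ba_{i_\infty}}\|_{L^{m_{i_\infty}+3-p}(B_{1/2})}$ into a H\"older estimate with exponent $\al_{i_\infty}=\ba_{i_\infty}-N/(m_{i_\infty}+3-p)$, and $\al_{i_\infty}\to s_1$ as $i_\infty\to\infty$; the explicit threshold $i_\infty>(3\al+(N-2)s_1)/(s_1-\al)$ in the statement is precisely what guarantees $\al_{i_\infty}\ge\al$. Each invocation of Proposition~\ref{propin1} produces a multiplicative factor $1+\la+\|\tl f\|_\infty\le C\,\mc K_R^{q-p}$, so after $i_\infty$ steps one accumulates $\mc K_R^{(q-p)i_\infty}$; the remaining $\mc K_R^{1}$ factor arises when unwinding the rescaling $v\mapsto u$, since $[u(\cdot,t)]_{C^{0,\al}(B_{R/2})}=R^{-\al}M\,[v(\cdot,\cdot)]_{C^{0,\al}}$.

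The principal obstacle is the careful bookkeeping of $\la=M^{q-p}R^{ps_1-qs_2}$ through the iteration. Unlike the constant-coefficient $(-\De)_p^{s_1}$ setting of \cite{brascoP}, each application of Proposition~\ref{propin1} introduces a factor depending polynomially on $\la$, and one must verify that the $q$-term does not obstruct the gain-of-regularity step. This is possible because the standing assumption $qs_2\le ps_1$ together with the uniform bound $\|v\|_\infty\le 1$ ensures that the $q$-contributions $\mb I_{1,q}$ and $\mb I_{2,q}$ appearing inside Proposition~\ref{propin1} stay absorbable in the $p$-Gagliardo energy; this is what makes the sharp accumulated exponent $(q-p)i_\infty+1$ on $\mc K_R$ attainable.
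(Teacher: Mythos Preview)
Your approach matches the paper's almost exactly: rescale to a normalized solution on $B_2\times(-2,0]$, then iterate Proposition~\ref{propin1} following \cite[Theorem~4.2]{brascoP}, tracking the extra factor $(1+\la+\|\tl f\|_\infty)$ at each of the $i_\infty$ steps. Two points need correction.

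First, your appeal to Corollary~\ref{corcaccpp} to obtain $\int_{-7/8}^0[v]_{W^{s_1,p}(B_1)}^p\,dt\le C$ is both unnecessary and inapplicable: that corollary requires $u\in L^\infty(\mb R^N\times[-R^{ps_1},0])$, which Theorem~\ref{thmins} does \emph{not} assume. The Sobolev energy term is already built into $\mc K_R$, so after rescaling the bound $\int_{-7/8}^0[v]_{W^{s_1,p}(B_1)}^p\,dt\le 1$ follows directly from the definition of $\mc K_R$; no Caccioppoli step is needed here. (Also, the claim $\la\ge 1$ is false in general, since $R^{ps_1-qs_2}$ can be arbitrarily small; only $\la\le \mc K_R^{q-p}$ matters, and that is what you use anyway.)

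Second, and more substantively, your single rescaling $v(x,t)=u(x_0+Rx,R^{ps_1}\mc K_R^{2-p}t)/\mc K_R$ does not cover the full target time interval. The iteration on $v$ produces a H\"older estimate for rescaled times $t'\in(-1/2,0]$; unscaling, this corresponds to $t\in(-R^{ps_1}\mc K_R^{2-p}/2,\,0]$, which is \emph{strictly smaller} than $(-R^{ps_1}/2,0]$ whenever $p>2$ and $\mc K_R>1$. Varying $T_1$ in Proposition~\ref{propin1} does not help, since the rescaling itself compresses the time axis by $\mc K_R^{2-p}$. The paper fixes this by introducing an additional time shift $\sg\in[-R^{s_1p}(1-\mc K_R^{2-p}),0]$ in the rescaling, setting $u_{R,\sg}(x,t)=\mc K_R^{-1}u(Rx,\mc K_R^{2-p}R^{s_1p}t+\sg)$, and noting that the resulting constant is independent of $\sg$; letting $\sg$ vary then sweeps out all of $(-R^{ps_1}/2,0]$.
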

\begin{proof}
	Let $\sg\in [-R^{s_1p}(1-\mc K_R^{2-p}),0]$ and define
	\begin{align*}
		u_{R,\sg}(x,t):=\frac{1}{\mc K_R}u(Rx, \mc K_R^{2-p}R^{s_1p}t+\sg) \quad\mbox{for }(x,t)\in B_2\times(-2,0].
	\end{align*}
	By the scaling property of the operators involved in problem \eqref{probM}, it is easy to observe that $u_{R,\sg}$ solves
	\begin{align*}
		\pa_t u_{R,\sg}+(-\De)_p^{s_1} u_{R,\sg}+\la (-\De)_{q,\tl a}^{s_2}u_{R,\sg}=\tl f \quad\mbox{in }B_2\times(-2,0],
	\end{align*}
	where $\la=\mc K_R^{q-p}R^{s_1p-s_2q}$ with the re-scaled functions $\tl a(x,y)=a(Rx,Ry)$ and $\tl f(x,t)=\frac{R^{ps_1}}{\mc K_R^{p-1}}f\big(Rx,\frac{R^{s_1p}}{\mc K_R^{p-2}}t+\sg\big)$. Moreover, $u_{R,\sg}$ satisfies 
	\begin{align*}
		&\|u_{R,\sg}\|_{L^\infty(B_1\times[-1,0])}\leq 1, \ \ \sum_{(\el,s)}T^\el_{\infty,\el s}(u_{R,\sg};0,1,-1,0)^\frac{1}{\el-1}\leq 1 \quad\mbox{and}\nonumber\\
		&\int_{-\frac{7}{8}}^{0} [u_{R,\sg}]_{W^{s_1,p}(B_1)}^p dt\leq 1.
	\end{align*}
	We claim that 
	\begin{align}\label{eqi50}
		\sup_{t\in[-1/2,0]} [u_{R,\sg}(\cdot,t)]_{C^{0,\al}(B_{1/2})} \leq C(1+\la+\|\tl f\|_{L^\infty(B_1\times(-1,0])})^{i_\infty},
	\end{align}
	where $i_\infty\in \mb N$ is such that $i_\infty>\frac{3\al+(N-2)s_1}{s_1-\al}$ and $C$ is a constant independent of $R$ and $\sg$.
	Indeed, the proof follows by using Proposition \ref{propin1} and following the iterative scheme presented as in \cite[Theorem 4.2]{brascoP} together with the observation that the process runs for $i_\infty$ steps only. Scaling back to $u$, from \eqref{eqi50}, we get 
	\begin{align*}
		&\sup_{\sg-\frac{1}{2}\mc K_R^{2-p}R^{s_1p}\leq t\leq 0} [u(\cdot,t)]_{C^{0,\al}(B_{R/2})} \nonumber\\
		&\quad\leq \frac{C}{R^\al}(1+\la+\mc K_R^{1-p}R^{ps_1}\|f\|_{L^\infty(B_R\times(-R^{ps_1},0])})^{i_\infty}\mc K_R,
	\end{align*}
	where $C=C(N,s_1,p,q,\|a\|_\infty,\al)>0$ is a constant. Note that in the above expression $C$ does not depend on $\sg$ and noting $\sg\in [-R^{s_1p}(1-\mc K_R^{2-p}),0]$ with $0<R<1\leq \mc K_R$, we get 
	\begin{align*}
		\sup_{-\frac{R^{s_1p}}{2}\leq t\leq 0} [u(\cdot,t)]_{C^{0,\al}(B_{R/2})}  \leq \frac{C}{R^{\al}} \mc K_R^{(q-p)i_\infty+1}.
	\end{align*}
	This proves the theorem by recalling the definition of $\mc K_R$.
\end{proof}

\begin{Proposition}\label{propin2}
	Let the hypotheses of Proposition \ref{propin1} hold. Additionally suppose that 
	for some $\ka\geq 2$, $\nu<1$ and $0<h_0<\frac{1}{10}$, we have $(1+\nu\ka)/\ka<1$ and
	\begin{align*}
		\int_{T_0}^{T_1}\sup_{0<|h|<h_0} \bigg\| \frac{\de^2_h u(\cdot,t)}{|h|^{s_1}}\bigg\|^m_{L^m(B_{R+4h_0})} dt<\infty,
	\end{align*}
	for $4h_0<R\leq 1 - 5h_0$ and $-1<T_0<T_1\leq 0$. Then, for all $\mu\in (0,T_1-T_0)$, there holds: 
	\begin{align*}
		&\int_{T_0+\mu}^{T_1}\sup_{0<|h|<h_0} \bigg\| \frac{\de^2_h u(\cdot,t)}{|h|^\frac{1+s_1p+\nu\ka}{\ka-1+p}}\bigg\|^{\ka-1+p}_{L^{\ka-1+p}(B_{R-4h_0})} dt \nonumber\\
		&\quad+ \frac{1}{\ka+1} \sup_{0<|h|<h_0} \bigg\| \frac{\de_h u(\cdot,T_1)}{|h|^{\frac{1+\nu\ka}{\ka+1}}}\bigg\|^{\ka+1}_{L^{\ka+1}(B_{R-4h_0})} \\
		&\leq C(1+\la+\|f\|_{L^\infty(Q_1)}) \int_{T_0}^{T_1} \left( \sup_{0<|h|<h_0} \bigg\| \frac{\de^2_h u(\cdot,t)}{|h|^\frac{1+\nu\ka}{\ka}}\bigg\|^\ka_{L^\ka(B_{R+4h_0})} +1 \right)dt,
	\end{align*}
	where $C= C(N,h_0,p,q,\ka,s_1,\|a\|_\infty)>0$ (which depends inversely on $h_0$). 
\end{Proposition}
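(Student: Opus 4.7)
The plan is to closely mirror the proof of Proposition \ref{propin1}, starting with Lemma \ref{auxtest} applied to $\Psi(z):=[z]^\ka$, together with the same spatial cut-off $\eta\in C^{0,1}_c(B_2)$ (equal to $1$ on $B_r$, $r:=R-4h_0$, supported in $B_{(R+r)/2}$, with $|\na\eta|\le C/(4h_0)$) and a temporal cut-off $\tau$ (vanishing for $t\le T_0$, equal to $1$ for $t\ge T_0+\mu$, with $|\tau'|\le C/\mu$). Dividing the resulting identity by $|h|^{1+\nu\ka}$ produces the familiar decomposition into the pieces $\mb I_{1,p},\mb I_{1,q},\mb I_{2,p},\mb I_{2,q},\mb I_3,\mb I_f$ together with the boundary term at time $T_1$.

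The lower bound for $\mb I_{1,p}$ by the Gagliardo seminorm of $[\de_h u]^{(\ka+p-1)/p}\eta/|h|^{(1+\nu\ka)/p}$, modulo the error pieces $\mb I_{4,p},\mb I_{5,p}$, is identical to that in Proposition \ref{propin1} since it only uses $p\ge 2$ and $\ka\ge 2$. For the $q$-term $\mb I_{1,q}$, the hypothesis \descref{A.1} cancels the coefficient difference $a_h(x,y)-a(x,y)$ inside $B_R\times B_R$, leaving a residue bounded below by $-C\la \mb I_{4,q}$; the strict inequality $qs_2<ps_1+2(1-s_1)$ then allows a Young inequality trading $|u(x)-u(y)|^{q-2}$ against the dominant $p$-kernel. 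The tails $\mb I_{2,p},\mb I_{2,q}$, the forcing $\mb I_f$, and the time piece $\mb I_3$ are treated verbatim using $\|u\|_\infty\le 1$, the tail hypothesis \eqref{eqibdu}, $f\in L^\infty$, and $|\tau'|\le C/\mu$.

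Assembling these estimates and using $(1+\nu\ka)/\ka<1$ to rewrite $|\de_h u|/|h|^{(1+\nu\ka)/\ka}$ as a genuine second-order difference quotient (as in the passage to \cite[Equ.~(3.35)]{JDSacv}) yields the intermediate inequality
\begin{align*}
 &\int_{T_0+\mu}^{T_1}\bigg[\frac{[\de_h u]^{(\ka+p-1)/p}\eta}{|h|^{(1+\nu\ka)/p}}\bigg]^p_{W^{s_1,p}(B_R)}\,dt+\frac{1}{\ka+1}\int_{B_R}\frac{|\de_h u(x,T_1)|^{\ka+1}}{|h|^{1+\nu\ka}}\eta^p\,dx\\
 &\le C(1+\la+\|f\|_{L^\infty(B_1\times(-1,0])})\int_{T_0}^{T_1}\bigg(\sup_{0<|h|<h_0}\bigg\|\frac{\de^2_h u}{|h|^{(1+\nu\ka)/\ka}}\bigg\|^\ka_{L^\ka(B_{R+4h_0})}+1\bigg)dt.
\end{align*}
The final step promotes the $L^p$-Gagliardo seminorm on the left to the $L^{\ka-1+p}$-norm of $\de^2_h u/|h|^{(1+s_1p+\nu\ka)/(\ka-1+p)}$ via Step~6 of \cite[Proposition~4.1]{brascoP}, producing both terms on the left-hand side of the claim. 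The endpoint $T_1=0$ is handled by the approximation argument in Step~7 of the same reference.

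I expect the principal subtlety to lie in the treatment of $\mb I_{1,q}$: producing a residue expressible solely in the two quantities on the right-hand side of the claim requires a careful balance of the Young exponents that is permitted by the strict inequality $qs_2<ps_1+2(1-s_1)$ together with the uniform bounds $\|u\|_\infty\le 1$ and $|u(x)-u(y)|^{q-p}\le C$. Tracking the small parameter $\e>0$ in the Young exponents and the Lipschitz constant of $\eta^{p/2}$ accounts for the inverse dependence of the constant $C$ on $h_0$ through the factor $(4h_0)^{-2}$.
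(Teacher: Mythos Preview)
Your proposal follows the template of Proposition~\ref{propin1} too closely and thereby misses the point of Proposition~\ref{propin2}. If you estimate the error pieces $\mb I_{4,p},\mb I_{5,p},\mb I_{4,q}$ via Young's inequality exactly as in Proposition~\ref{propin1}, the residue you obtain is
\[
\sup_{0<|h|<h_0}\bigg\|\frac{\de^2_h u}{|h|^{s_1}}\bigg\|^m_{L^m(B_{R+4h_0})}+1+\int_{B_R}\frac{|\de_h u(x)|^{\frac{\ka m}{m-p+2}}}{|h|^{\frac{(1+\nu\ka)m}{m-p+2}}}\,dx,
\]
which carries the auxiliary parameter $m$ and the exponent $s_1$. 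This is \emph{not} the right-hand side of the claim, which must be expressed purely through $\sup_{0<|h|<h_0}\|\de^2_h u/|h|^{(1+\nu\ka)/\ka}\|^\ka_{L^\ka(B_{R+4h_0})}$. Your ``careful balance of the Young exponents'' does not bridge this gap: the Young split in Proposition~\ref{propin1} is designed to match the $m$-based hypothesis there, not the $\ka$-based one here.

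The paper's proof exploits a new ingredient that becomes available only after Proposition~\ref{propin1} and Theorem~\ref{thmins} have been established: the a~priori spatial H\"older continuity $u\in C^{0,s_1-\e}_{x,\mathrm{loc}}$ with a uniform seminorm bound. This allows the pointwise estimate $|u(x)-u(y)|^{\el-2}\le C|x-y|^{(\el-2)(s_1-\e)}$, and for $\e$ small enough one has $2+(\el-2)(s_1-\e)-s\el>0$, so the kernel integrates and
\[
|\mb I_{5,p}(t)|+|\mb I_{4,q}(t)|\le C(1+\la)\int_{B_R}\frac{|\de_h u(x,t)|^{\ka+1}}{|h|^{1+\nu\ka}}\,dx\le C\int_{B_R}\frac{|\de_h u(x,t)|^{\ka}}{|h|^{1+\nu\ka}}\,dx,
\]
the last step using $\|u\|_{L^\infty}\le 1$. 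From here the passage to second-order differences with exponent $(1+\nu\ka)/\ka$ goes through as in \cite[Proposition~5.1]{brascoP}. In short, the missing idea in your argument is the feedback of the already-proved $C^{0,s_1-}$ regularity into the estimate of the lower-order error terms.
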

\begin{proof}
	For every $\e>0$, by Theorem \ref{thmins}, we have $u\in C^{0,s_1-\e}_{x,{\rm loc}}(B_2\times (-2,0])$ and
	\begin{align*}
		\sup_{t\in [T_1,T_1]} [u(\cdot,t)]_{C^{0,s_1-\e}(B_{R+h_0})} \leq C(N,h_0,p,s_1,q,s_2,\|a\|_\infty).
	\end{align*}
	Consequently, for $\e\in \Big(0,\min\{\frac{2(1-s_1)}{p-2},s_1 \}\Big)$, using the bounds on $\eta$ and $\na\eta$, we obtain
	\begin{align*}
		&\int_{B_R}a_\el\frac{|u(x)-u(y)|^{\el-2}|\eta^\frac{p}{2}(x)-\eta^\frac{p}{2}(y)|^2}{|x-y|^{N+s\el}}dy \\
		&\leq C\int_{B_R}|x-y|^{-N-s\el+2+(\el-2)(s_1-\e)}dy\leq C,
	\end{align*}
	where we have used $2+(\el-2)(s_1-\e)-s\el>0$. Therefore, with the notation of Proposition \ref{propin1}, we have 
	\begin{align*}
		|\mb I_{5,p}(t)+\mb I_{4,q}(t)|\leq C(1+\la)\int_{B_R}\frac{|\de_h u(x,t)|^{\ka+1}}{|h|^{1+\nu\ka}}dx\leq C\int_{B_R}\frac{|\de_h u(x,t)|^{\ka}}{|h|^{1+\nu\ka}}dx,
	\end{align*}
	where we used the local boundedness of $u$.  The other terms can be estimated similarly. Then, proceeding as in \cite[Proposition 5.1]{brascoP}, we can complete the proof of the proposition. 
\end{proof}
Using Proposition \ref{propin2} and following the iteration argument as in the proof of \cite[Theorem 5.2]{brascoP}, we can prove the following higher space regularity result.
\begin{Theorem}\label{thminwtl}
	Let the hypothesis of Theorem \ref{thmins} be true. 
	Then, for every $\al\in (0,\Theta)$, $u\in C^{0,\al}_{x, {\rm loc}}(\Om\times I)$, where $\Theta$ is given by \eqref{eqTheta}. More precisely, for every $\al\in (0,\Theta)$ and $R\in (0,1)$ satisfying $Q_{2R,2R^{s_1p}}\Subset\Om\times I$, there exists a constant $C=C(N,s_1,s_2,p,q,\al,\|a\|_\infty)>0$ such that 
	\begin{align}\label{eqinh}
		\sup_{t\in[-\frac{R^{s_1p}}{2},0]} [u(\cdot,t)]_{C^{0,\al}(B_\frac{R}{2})} \leq \frac{C}{R^{\al}} \mc K_R^{j_\infty}, 
	\end{align}
	for some $j_\infty\in\mb N$ depending only on $N,p,s_1$ and $\al$, where the constant $\mc K_R$ is given by \eqref{K R}.
\end{Theorem}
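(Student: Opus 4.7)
The plan is to bootstrap the Hölder regularity provided by Theorem \ref{thmins} (which yields exponent $\alpha_0<s_1$) by iteratively applying Proposition \ref{propin2} to a rescaled solution, in the spirit of \cite[Theorem 5.2]{brascoP}. As in the proof of Theorem \ref{thmins}, I would first perform the normalization
\begin{align*}
u_{R,\sigma}(x,t):=\frac{1}{\mc K_R}u\bigl(Rx,\,\mc K_R^{2-p}R^{s_1p}t+\sigma\bigr),\qquad (x,t)\in B_2\times(-2,0],
\end{align*}
so that $u_{R,\sigma}$ solves an equation of the same form with coefficient $\lambda=\mc K_R^{q-p}R^{s_1p-s_2q}$ and source $\tilde f$, satisfies $\|u_{R,\sigma}\|_{L^\infty(B_1\times[-1,0])}\le 1$, has unit tail bounds, and the Caccioppoli estimate from Corollary \ref{corcaccpp} gives a uniform $W^{s_1,p}$ integral bound. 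Under $qs_2\le ps_1$ and $R\in(0,1)$, both $\lambda$ and $\|\tilde f\|_{L^\infty}$ are controlled by constants multiplied by $\mc K_R^{q-p}$, so Proposition \ref{propin2} applies with quantitative constants depending only on $N,s_1,s_2,p,q,\|a\|_\infty$.

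Next, starting from a Besov-type seminorm input at level $\alpha_0<s_1$ (which follows from Theorem \ref{thmins} since spatial Hölder regularity provides pointwise bounds on $\delta^2_h u/|h|^{\alpha_0}$), I would set up the recursion
\begin{align*}
\kappa_0=p,\qquad \kappa_{i+1}=\kappa_i+p-1,\qquad \nu_{i+1}\kappa_{i+1}=s_1p+\nu_i\kappa_i,
\end{align*}
with $\nu_0$ chosen so that $(1+\nu_0\kappa_0)/\kappa_0=\alpha_0$. A direct calculation gives
\begin{align*}
\frac{1+\nu_i\kappa_i}{\kappa_i}\ \longrightarrow\ \frac{ps_1}{p-1},\qquad \kappa_i\to\infty,
\end{align*}
and each invocation of Proposition \ref{propin2} at step $i$ transforms an $L^{\kappa_i}$-type Besov seminorm at differentiability order $(1+\nu_i\kappa_i)/\kappa_i$ into an $L^{\kappa_{i+1}}$-type seminorm at order $(1+s_1p+\nu_i\kappa_i)/\kappa_{i+1}=(1+\nu_{i+1}\kappa_{i+1})/\kappa_{i+1}$, at the cost of shrinking the spatial radius by $4h_0$ and the time interval by $\mu$. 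The admissibility conditions $\nu_{i}<1$ and $(1+\nu_i\kappa_i)/\kappa_i<1$ required by Proposition \ref{propin2} are precisely what distinguishes the two regimes defining $\Theta$ in \eqref{eqTheta}: if $s_1<(p-1)/p$ then $\nu_i<ps_1/(p-1)<1$ throughout and one may iterate indefinitely, reaching any exponent $\alpha<ps_1/(p-1)=\Theta$; if $s_1\ge(p-1)/p$ one must halt at a finite step $j_\infty=j_\infty(N,p,s_1,\alpha)$ chosen so that the target exponent $\alpha<1=\Theta$ is cleared.

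At the final step, a Morrey-type fractional embedding converts the accumulated $L^{\kappa_{j_\infty}}$-Besov bound into spatial Hölder continuity with exponent $\alpha$, uniformly in time on slightly shrunk cylinders. Undoing the scaling reintroduces the factor $1/R^\alpha$ and compounds the multiplicative constants from each of the $j_\infty$ iterations: each application of Proposition \ref{propin2} contributes a factor $C(1+\lambda+\|\tilde f\|_{L^\infty})$, which on rescaling back contributes a bounded power of $\mc K_R^{q-p}$, yielding the polynomial bound $\mc K_R^{j_\infty}$ after aggregation (where the exponent $j_\infty$ is redefined if necessary to absorb constants). The main obstacle is the combinatorial bookkeeping across iterations: verifying that the Besov exponents and integrabilities chain correctly, that the boundary conditions $\nu_i<1$ and $(1+\nu_i\kappa_i)/\kappa_i<1$ remain valid up to step $j_\infty$, that the uniform control of $\lambda$ and $\|\tilde f\|_{L^\infty}$ survives every rescaling thanks to $qs_2\le ps_1$, and that the constants $C$ in each application do not depend on the iteration index $i$ so that the final product is polynomial (not exponential) in $\mc K_R$.
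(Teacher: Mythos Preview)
Your proposal is correct and follows exactly the route the paper takes: the paper's proof of Theorem \ref{thminwtl} is the single sentence ``Using Proposition \ref{propin2} and following the iteration argument as in the proof of \cite[Theorem 5.2]{brascoP},'' and your outline---normalization as in Theorem \ref{thmins}, the recursion $\kappa_{i+1}=\kappa_i+p-1$, $\nu_{i+1}\kappa_{i+1}=s_1p+\nu_i\kappa_i$ with limiting exponent $ps_1/(p-1)$, termination at a finite $j_\infty$, and tracking of the factor $(1+\lambda+\|\tilde f\|_{L^\infty})$ through each step---is precisely what that reference entails. One small slip: the uniform $W^{s_1,p}$ bound on $u_{R,\sigma}$ comes directly from the definition of $\mc K_R$ in \eqref{K R}, not from Corollary \ref{corcaccpp} (which requires global $L^\infty$ in space and is only invoked later in the proof of Theorem \ref{thminh}).
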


\subsection{Regularity in time}
To move forward, for $u\in L^1(Q_{r,\theta}(x_0,t_0))$, we set 
\begin{align*}
	(u)_{(x_0,t_0),r,\theta}=\Xint-_{Q_{r,\theta}(x_0,t_0)} u(x,t)dxdt,
\end{align*}
and when the center is clear, we denote it by $(u)_{r,\theta}$. 
\begin{Proposition}\label{proptime1}
	Let $u$ be a local weak solution to problem \eqref{probM} in $B_2\times (-2,0]$ with $f\in L^\infty(B_1\times (-1,0])$ such that 
	\begin{align}\label{eqt91}
		\|u\|_{L^\infty(\mb R^N\times[-1,0])}\leq 1 \quad\mbox{and } \sup_{t\in[-\frac{1}{2},0]}[u(\cdot,t)]_{C^{0,\al}(B_\frac{1}{2})}\leq K_\al, 
	\end{align}
 for $\al\in (s_1,\Theta)$,	where $\Theta$ is defined in \eqref{eqTheta}. Then, there exists a positive constant $C$ depending only on $N,s_1,p,q,K_\al,\al,\|a\|_\infty$ and $\|f\|_{L^\infty(B_1\times (-1,0])}$ such that 
	\begin{align*}
		|u(x,t)-u(x,\tau)|\leq C|t-\tau|^\ga, \quad\mbox{for all }x\in B_{1/4}, \mbox{ and } t,\tau\in (-1/4,0],
	\end{align*}
	where $\ga=\frac{\al}{ps_1-(p-2)\al}$. In particular, $u\in C^{0,\ga}_{t}(Q_{\frac{1}{4},\frac{1}{4}})$ for any $\ga<\Ga$, where $\Ga$ is given by \eqref{eqGamma}.
\end{Proposition}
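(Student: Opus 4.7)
The plan is to combine the spatial H\"older bound assumed in \eqref{eqt91} with a quantitative use of the weak formulation tested against a smooth spatial mollifier, then balance the resulting estimates by choosing the mollifier scale optimally. This is the approach of Brasco--Parini for the parabolic fractional $p$-Laplacian, adapted to the double-phase setting. Fix $x_0\in B_{1/4}$ and $-1/4<\tau<t\leq 0$; for $r\in (0,1/8)$ pick $\phi_r\in C^\infty_c(B_r(x_0))$ with $\int\phi_r=1$, $\|\phi_r\|_\infty\leq Cr^{-N}$ and $\|\na\phi_r\|_\infty\leq Cr^{-N-1}$, and set $\bar u_r(s):=\int u(\cdot,s)\phi_r$.

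Testing \eqref{eqWF} with $\phi_r(x)\chi_{[\tau,t]}(s)$---justified by the time regularization of Definition \ref{regtst} together with $u\in C(J;L^2_{\rm loc})$, as in Lemma \ref{cacciopp}---yields
\begin{align*}
\bar u_r(t)-\bar u_r(\tau)= -\int_\tau^t\big(\mc I_p(s)+\mc I_q(s)-\mc I_f(s)\big)ds,
\end{align*}
where $\mc I_\el(s)$ is the pairing of the $\el$-kernel against $\phi_r(x)-\phi_r(y)$ and $\mc I_f(s)=\int f(\cdot,s)\phi_r$. To estimate $\mc I_\el$, I split $\mb R^N\times\mb R^N$ into the near region $\{|x-y|\leq r/2\}$ and its complement. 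In the near region, both $x,y$ lie in $B_{3r/2}(x_0)\subset B_{1/2}$ whenever the integrand is nonzero, so $|u(x,s)-u(y,s)|^{\el-1}\leq K_\al^{\el-1}|x-y|^{\al(\el-1)}$ and $|\phi_r(x)-\phi_r(y)|\leq Cr^{-N-1}|x-y|$; the remaining integral converges because $\al(\el-1)+1-\el s>0$ (which follows from $\al>s_1$, $qs_2\leq ps_1$ and $s_1<1$), giving a contribution $CK_\al^{\el-1}r^{\al(\el-1)-\el s}$. In the far region, $\phi_r(x)-\phi_r(y)$ vanishes unless at least one of $x,y$ lies in $B_r(x_0)$; bounding $\|\phi_r\|_\infty\leq Cr^{-N}$ and splitting the remaining variable into $B_{1/2}$ (use the H\"older bound) and its complement (use $\|u\|_\infty\leq 1$ and the tail hypothesis $\sum_{(\el,s)}T^\el_{\infty,\el s}(u;0,1,-1,0)\leq 1$) yields a similar bound $C(K_\al^{\el-1}r^{\al(\el-1)-\el s}+1)$. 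Also $|\mc I_f(s)|\leq \|f\|_\infty$.

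Since $\int\phi_r=1$ and $\phi_r$ is supported in $B_r(x_0)\subset B_{1/2}$, the spatial H\"older bound gives $|u(x_0,s)-\bar u_r(s)|\leq K_\al r^\al$ for $s\in\{t,\tau\}$, so
\begin{align*}
|u(x_0,t)-u(x_0,\tau)|\leq 2K_\al r^\al + C(t-\tau)\big(K_\al^{p-1}r^{\al(p-1)-ps_1}+K_\al^{q-1}r^{\al(q-1)-qs_2}+1\big).
\end{align*}
Because $q\geq p$ and $qs_2\leq ps_1$, one has $\al(p-1)-ps_1\leq \al(q-1)-qs_2$, so the $p$-term dominates as $r\to 0$. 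Since $\al<\Theta\leq ps_1/(p-1)<ps_1/(p-2)$ one checks $ps_1-\al(p-2)>0$, and the choice $r=(t-\tau)^{1/(ps_1-\al(p-2))}$ balances the first two contributions to give $|u(x_0,t)-u(x_0,\tau)|\leq C(t-\tau)^\ga$ with $\ga=\al/(ps_1-\al(p-2))$; when this $r$ exceeds the admissible range $(0,1/8)$ the bound is immediate from $\|u\|_\infty\leq 1$.

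The final statement $u\in C^{0,\ga}_t$ for $\ga<\Ga$ then follows by letting $\al\uparrow\Theta$: when $s_1\geq(p-1)/p$ we have $\Theta=1$ and $\ga\uparrow 1/(ps_1-p+2)$, while when $s_1<(p-1)/p$ we have $\Theta=ps_1/(p-1)$ and a short algebraic simplification gives $\ga\uparrow 1$, matching \eqref{eqGamma}. The main obstacle is the careful control of the near-diagonal singular integrals in Step 2: since $\al$ may be strictly less than $ps_1/(p-1)$ the pointwise fractional $p$-Laplacian of $u$ need not make sense, but pairing against the smooth, mean-one mollifier $\phi_r$ resolves the singularity via the cancellation of $\phi_r(x)-\phi_r(y)$, provided one separates the near and far scales cleanly and uses $qs_2\leq ps_1$ to keep the $q$-contribution subordinate. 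A secondary technical point is the regularization of the discontinuous indicator test function, for which the argument of Lemma \ref{cacciopp} transfers verbatim.
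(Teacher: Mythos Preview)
Your proof is correct and follows essentially the same strategy as the paper: test the weak formulation against a spatial mollifier at scale $r$, estimate the nonlocal terms using the spatial H\"older bound near the diagonal and the global $L^\infty$ bound away from it, then balance $r^\al$ against $(t-\tau)r^{\al(p-1)-ps_1}$. The paper packages this through a Campanato-type mean oscillation $\Xint-_{Q_{r,\theta}}|u-(u)_{r,\theta}|$ and defers the final optimization to \cite[Proposition~6.2]{brascoP}, whereas you pass directly to the pointwise bound via $|u(x_0,s)-\bar u_r(s)|\leq K_\al r^\al$; the substance is the same.
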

\begin{proof}
	For fixed $(x_0,t_0)\in Q_{1/4,1/4}$, we choose $r\in (0,1/8)$ and $\theta\in (0,1/8)$. Then, we observe that 
	\begin{align*}
		Q_{r,\theta}(x_0,t_0):=B_r(x_0)\times(t_0-\theta,t_0]\subset B_{3/8}\times(-1/2,0].
	\end{align*}
	Let $\psi\in C^\infty_{c}(B_{r/2}(x_0))$ be a non-negative cut-off function such that 
	\begin{align*}
		\psi\equiv \|\psi\|_{L^\infty(B_\frac{r}{2}(x_0))} \ \mbox{ in } B_\frac{r}{4}(x_0), \quad (\psi)_{r}=1 \quad\mbox{and } \|\na\psi\|_{L^\infty(B_\frac{r}{2}(x_0))}\leq \frac{C}{r},
	\end{align*}
	for some $C=C(N)>0$ (note that the fact $(\psi)_{r}=1$ yields $\|\psi\|_{L^\infty(B_\frac{r}{2}(x_0))} \leq 4^N$). Set 
	\begin{align*}
		(u\psi)_r(t):=\Xint-_{B_r(x_0)}u(x,t)\psi(x)dx.
	\end{align*}
	Then, on account of \cite[Equ. (6.3)]{brascoP}, we observe that 
	\begin{align*}
		\Xint-_{Q_{r,\theta}(x_0,t_0)} |u(x,t)-(u)_{r,\theta}|dxdt &\leq 2 \Xint-_{Q_{r,\theta}(x_0,t_0)} |u(x,t)-(u\psi)_{r}(t)|dxdt \nonumber\\
		&\quad+2\Xint-_{Q_{r,\theta}(x_0,t_0)} |(u\psi)_{r}(t)-(u\psi)_{r,\theta}|dxdt \nonumber\\
		&=:\mc O_1+\mc O_2.
	\end{align*}
	Using the Poincar\'e inequality of \cite[Lemma 6.1]{brascoP}, we obtain
	\begin{align*}
		\mc O_1&\leq 2\Bigg(\Xint-_{Q_{r,\theta}(x_0,t_0)} |u(x,t)-(u\psi)_{r}(t)|^pdxdt \Bigg)^{1/p} \\
		&\leq C\Bigg( \frac{r^{s_1p}}{|Q_{r,\theta}(x_0,t_0)|} \int_{t_0-\theta}^{t_0}\int_{B_r(x_0)}\int_{B_r(x_0)} \frac{|u(x,t)-u(y,t)|^p}{|x-y|^{N+ps_1}}dxdydt\Bigg)^{1/p},
	\end{align*}
	for some $C=C(N,p,s_1)>0$. Note that for $x\in B_r(x_0)$, we have $B_r(x_0)\subset B_{2r}(x)\subset B_{1/2}$. Thus, using \eqref{eqt91}, we get (note that $\al>s_1$)
	\begin{align*}
		&\int_{B_r(x_0)}\int_{B_r(x_0)} \frac{|u(x,t)-u(y,t)|^p}{|x-y|^{N+ps_1}}dxdy \\
		&\leq K_\al^p \int_{B_r(x_0)} \left(\int_{B_r(x_0)} |x-y|^{-N+p(\al-s_1)}dy\right)dx \\
		&\leq CK_\al^p \, r^{(\al-s_1)p},
	\end{align*}
	where $C=C(N,s_1,p)$. Consequently,
	\begin{align*}
		\mc O_1\leq CK_\al r^{\al}.
	\end{align*}
	To estimate $\mc O_2$, we observe that
	\begin{align*}
		\mc O_2\leq 2\sup_{T_0,T_1\in (t_0-\theta,t_0]} |(u\psi)_r(T_0)-(u\psi)_r(T_1)|.
	\end{align*} 
	Thus, for $T_0,T_1\in (t_0-\theta,t_0]$ with $T_0<T_1$, from the weak formulation of problem \eqref{probM}, we obtain
	\begin{align*}
		&|(u\psi)_r(T_0)-(u\psi)_r(T_1)|\\ &=\frac{1}{|B_r(x_0)|}\bigg|\int_{B_r(x_0)}u(x,T_0)\psi(x)dx-\int_{B_r(x_0)}u(x,T_1)\psi(x)dx \bigg| \\
		&=\frac{1}{|B_r(x_0)|}\bigg|\int_{T_0}^{T_1}\iint_{\mb R^{2N}} [u(x,t)-u(y,t)]^{p-1}(\psi(x)-\psi(y))d\mu_1dt \\
		&\ + \int_{T_0}^{T_1} \iint_{\mb R^{2N}} a(x,y) [u(x,t)-u(y,t)]^{q-1}(\psi(x)-\psi(y))d\mu_2dt \\
		& \ -\int_{T_0}^{T_1}\int_{B_r(x_0)}f(x,t)\psi(x)dxdt \bigg|\\
		&\leq \bigg|\sum_{(\el,s)}\int_{T_0}^{T_1}\int_{B_r(x_0)}\Xint-_{B_r(x_0)} a_\el[u(x,t)-u(y,t)]^{\el-1}(\psi(x)-\psi(y))d\mu dt \bigg|\\
		&\ + 2\bigg|\sum_{(\el,s)}\int_{T_0}^{T_1}\int_{\mb R^N\setminus B_r(x_0)}\Xint-_{B_{r}(x_0)}a_\el [u(x,t)-u(y,t)]^{\el-1}\psi(x)d\mu dt \bigg| \\
		&\  + \bigg|\int_{T_0}^{T_1}\Xint-_{B_r(x_0)}f(x,t)\psi(x)dxdt \bigg|\\
		&=:\mathfrak{J}_1+\mathfrak{J}_2+\mathfrak{J}_3.
	\end{align*}
	Using H\"older's inequality, \eqref{eqt91} and the bounds on $\psi$, we get (noting $\al>\max\{s_1, s_2\}$)
	\begin{align*}
		&\mathfrak{J}_1\\&\leq \frac{\|a_\el\|_{\infty}}{|B_r(x_0)|} [\psi]_{W^{s,\el}(B_r(x_0))} \int_{t_0-\theta}^{t_0} \Bigg(\int_{B_r(x_0)}\int_{B_r(x_0)} |u(x,t)-u(y,t)|^\el d\mu \Bigg)^\frac{\el-1}{\el}dt \\
		&\leq CK_\al^{\el-1} r^{-s} \int_{t_0-\theta}^{t_0} \Bigg(\int_{B_r(x_0)}\int_{B_r(x_0)} |x-y|^{\al\el-N-s\el}dxdy \Bigg)^\frac{\el-1}{\el}dt \\
		&\leq C \; K_\al^{\el-1} \; \theta \; r^{\al(\el-1)-\el s},
	\end{align*}
	where $C=C(N,p,s_1,\al,\|a\|_{\infty})>0$. To estimate $\mathfrak{J}_2$,  we observe that for $x\in B_r(x_0)$, $t\in [-1/2,0]$ and $y\in\mb R^N$, we have
	\begin{align*}
		|u(x,t)-u(y,t)|\leq C|x-y|^\al \quad\mbox{for some }C=C(K_\al,\al)>0.
	\end{align*} 
	Indeed, for $y\in B_{1/2}$ it follows from \eqref{eqt91} that if $y\in \mb R^N\setminus B_{1/2}$, then 
	\begin{align*}
		|u(x,t)-u(y,t)|\leq 2\|u\|_{L^\infty(\mb R^N\times[-1,0])}\leq 2\leq \frac{2}{8^\al}|x-y|^\al.
	\end{align*} 
	Therefore (noticing that ${\rm supp}(\psi)\subset B_{r/2}(x_0)$),
	\begin{align*}
		&\mathfrak{J}_2\\
		&\leq 2(T_1-T_0)\|a\|_{\infty}\|\psi\|_{L^\infty(B_\frac{r}{2}(x_0))}  \int_{B_r(x_0)^c} \Xint-_{B_\frac{r}{2}(x_0)} \frac{|u(x,t)-u(y,t)|^{\el-1}}{|x-y|^{N+s\el}}dxdy\\
		&\leq C\theta \int_{B_r(x_0)^c}\Xint-_{B_r(x_0)} |x-y|^{\al(\el-1)-N-s\el}dxdy\\
		&\leq C\; \theta \; r^{\al(\el-1)-\el s},
	\end{align*}
	where $C=C(N,p,s_1,\al,\|a\|_{\infty})>0$. Finally, noting the fact that $r\in (0,1/8)$ and $\al(p-1)-ps_1<0$, we have 
	\begin{align*}
		\mathfrak{J}_3 \leq C \|f\|_{L^\infty(Q_{r,\theta}(x_0,t_0))} \theta \leq C\|f\|_{L^\infty(Q_{r,\theta}(x_0,t_0))}\theta r^{\al(p-1)-ps_1}.
	\end{align*}
	Consequently,
	\begin{align*}
		\mc O_2 \leq  C\; \theta \; r^{\al(\el-1)-\el s}.
	\end{align*}
	Finally, on account of the relation $\al(p-1)-ps_1\leq \al(q-1)-qs_2$ and the fact $r\in (0,1/8)$, we have
	\begin{align*}
		\Xint-_{Q_{r,\theta}(x_0,t_0)} |u(x,t)-(u)_{r,\theta}|dxdt	\leq C r^\al+ C\; \theta \; r^{\al(p-1)-p s_1},
	\end{align*}
	where  $C=C(N,p,s_1,\al,K_\al,\|a\|_{\infty},\|f\|_{L^\infty(Q_{r,\theta}(x_0,t_0))})>0$. Now, the rest of the proof can be completed as in the proof of \cite[Proposition 6.2]{brascoP}. 
\end{proof}
\begin{Remark}\label{remwtglbd}
	It is clear from the proof that when $u\in L^\infty(\mb R^N\times[-1,0])$ is not assumed, then only the proof of $\mf J_2$ is changed. Indeed, it can be shown that 
	\begin{align*}
		\mf J_2\leq C\;\theta\;r^{-\el s}.
	\end{align*}
	Then the proof of the time regularity can be completed by proceeding similarly and choosing $\theta$ accordingly. 
\end{Remark}
\textbf{Proof of Theorem \ref{thminh}:} The proof of the theorem follows from  Theorem \ref{thminwtl} and  Proposition \ref{proptime1} after rescaling (see \cite[Section 7]{brascoP}). 
Additionally,  by noting $u\in L^\infty(\mb R^N\times[\hat t_0-R^{s_1p},\hat t_0])$ and using Corollary \ref{corcaccpp}, we see that 
\begin{align*}
	\mc K_R\leq C  \Big(\|u\|_{L^\infty(\mb R^N\times[\hat t_0-R^{s_1p},\hat t_0])}+1+\|f\|^\frac{1}{p-1}_{L^\infty(Q_{R,R^{s_1p}})}\Big)^{q/p}.
\end{align*} 
This completes the proof of the theorem.

\section{A further investigation of the stationary case}
In this section, we obtain higher H\"older continuity results for weak solutions to problem \eqref{probSt} with more general modulating coefficients and non-homogeneous terms.
\subsection{Interior regularity}
We first start with the following stability result. 
\begin{Lemma}\label{lemstab}
	Suppose that $p\geq 2$, $q< p^*_{s_1}$ and $qs_2\leq ps_1$. Let $u$ be a weak solution to problem \eqref{probSt} in $B_2\equiv B_{2}(x_0)\Subset\Om$,   such that   
	\begin{align*}
		\|u\|_{L^\infty(B_{{3/2}})} +\sum_{(\el,s)}
		T^\el_{\el s}(u;x_0,{3/2})\leq M,
	\end{align*}
	for some $M\geq 1$. Let $\de\in (0,1)$ be such that 
	\begin{align*}
		\| a - \tl a\|_{L^\infty(\mb R^N\times\mb R^N)} \leq \de\quad\mbox{and}\quad \|f\|_{L^\Bbbk(B_{3/2})}\leq\de,
	\end{align*}
	where $\tl a(\cdot,\cdot)\in L^\infty(\mb R^N\times\mb R^N)$ is a non-negative function satisfying the assumption {\rm\textbf{(A.1)}} in $B_1\times B_1$ with
	\begin{equation*}
		\left\{
		\begin{array}{rllll}
			(-\De)_p^{s_1}v+(-\De)_{q,\tl a}^{s_2}v&=0 \quad\mbox{in }B_1,\\
			v&=u \quad\mbox{in }\mb R^N\setminus B_1.
		\end{array}
		\right.
	\end{equation*}
	Then, there exists $\vep(\de)=\vep(\de,N,p,q,s_1,\Bbbk,M,\|a\|_{\infty})>0$ such that
	\begin{align*}
		\| u - v\|_{L^\infty(B_{3/8})}<\vep(\de)
	\end{align*}
	and $\vep(\de)$ converges to $0$ as $\de\to 0$.
\end{Lemma}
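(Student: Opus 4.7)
The plan is to establish the stability estimate in two stages: first I would derive an energy bound showing that $w:=u-v$ satisfies $[w]_{W^{s_1,p}(\mb R^N)}\to 0$ as $\de\to 0$, then I would upgrade this Sobolev smallness to $L^\infty$ smallness on $B_{3/8}$ via a standard interpolation against a $\de$-uniform H\"older seminorm. Since $w\equiv 0$ in $\mb R^N\setminus B_1$, it is an admissible test function in the difference of the weak formulations for $u$ and $v$, which produces a $p$-integrand $([u(x)-u(y)]^{p-1}-[v(x)-v(y)]^{p-1})(w(x)-w(y))$ integrated against $d\mu_1$, a $q$-integrand $(a(x,y)[u(x)-u(y)]^{q-1}-\tl a(x,y)[v(x)-v(y)]^{q-1})(w(x)-w(y))$ integrated against $d\mu_2$, and a right-hand side $\int_{B_1}f\,w\,dx$. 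The monotonicity inequality \eqref{eqmon} (valid since $p\ge 2$) gives a lower bound $c[w]_{W^{s_1,p}(\mb R^N)}^p$ from the $p$-part. In the $q$-part I would split the integrand as $\tl a(x,y)([u(x)-u(y)]^{q-1}-[v(x)-v(y)]^{q-1})+(a-\tl a)(x,y)[u(x)-u(y)]^{q-1}$; the first summand is non-negative by \eqref{eqmon} and can be dropped, while the second is a $\de$-perturbation since $\|a-\tl a\|_\infty\le\de$.

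The main task is then bounding this perturbation. Its absolute value is dominated by $\de\iint|u(x)-u(y)|^{q-1}|w(x)-w(y)|\,d\mu_2$, which I would split into $B_1\times B_1$ and $B_1\times(\mb R^N\setminus B_1)$. On $B_1\times B_1$, the assumption $qs_2\le ps_1$ together with $\mathrm{diam}(B_1)\le 2$ allows me to dominate $d\mu_2$ by a constant multiple of $d\mu_1$; the $L^\infty$ bound $|u|\le M$ on $B_{3/2}$ then gives $|u(x)-u(y)|^{q-1}\le(2M)^{q-p}|u(x)-u(y)|^{p-1}$, so H\"older's inequality against $d\mu_1$ yields a bound $C\de M^{q-p}[u]_{W^{s_1,p}(B_1)}^{p-1}[w]_{W^{s_1,p}(\mb R^N)}$, with $[u]_{W^{s_1,p}(B_1)}$ controlled via the Caccioppoli estimate of Lemma \ref{caccpstn}. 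The region $B_1\times(\mb R^N\setminus B_1)$ is handled by the same trick combined with the tail bound $T^q_{qs_2}(u;x_0,3/2)\le M$. The source term satisfies $|\int_{B_1}f\,w\,dx|\le\de\|w\|_{L^{\Bbbk'}(B_1)}\le C\de[w]_{W^{s_1,p}(\mb R^N)}$, because $\Bbbk>\max\{N/(ps_1),1\}$ forces $\Bbbk'\le p^*_{s_1}$ and the fractional Sobolev embedding applies to $w$ (which is compactly supported in $B_1$). Absorbing the linear factors of $[w]_{W^{s_1,p}}$ on the right via Young's inequality gives $[w]_{W^{s_1,p}(\mb R^N)}^p\le C_M(\de+\de^{p/(p-1)})$, and another application of Sobolev embedding delivers $\|w\|_{L^{p^*_{s_1}}(B_1)}\to 0$ as $\de\to 0$.

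To convert this into an $L^\infty$ bound, I would invoke Proposition \ref{holdunspstn} applied to $u$ to obtain a seminorm estimate $[u]_{C^{0,\al_1}(B_{1/2})}\le C(M)$, and apply the same proposition to $v$: since $v$ solves a stationary equation with modulating coefficient $\tl a$ (whose $L^\infty$-norm is at most $\|a\|_\infty+1$) and zero right-hand side, and since Lemma \ref{bdd bdry} together with the agreement $v=u$ outside $B_1$ yields the same $L^\infty$ and tail control for $v$, we get $[v]_{C^{0,\al_1}(B_{1/2})}\le C(M)$ as well. Hence $[w]_{C^{0,\al_1}(B_{1/2})}\le 2C(M)$, and the elementary interpolation (averaging $w$ over a small ball and using H\"older oscillation) produces $\|w\|_{L^\infty(B_{3/8})}\le C\,[w]_{C^{0,\al_1}(B_{1/2})}^{1-\theta}\|w\|_{L^{p^*_{s_1}}(B_{1/2})}^{\theta}$ for some $\theta\in(0,1)$, which tends to $0$ as $\de\to 0$, yielding the required $\vep(\de)$. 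The main obstacle lies in controlling the $q$-perturbation: because $p<q$ in general, the spaces $W^{s_1,p}$ and $W^{s_2,q}$ are not comparable and one cannot test against a $W^{s_2,q}$-norm of $u$ alone. The resolution hinges on using $qs_2\le ps_1$ to transfer the singular kernel $d\mu_2$ into a multiple of $d\mu_1$ and then trading the excess power $|u(x)-u(y)|^{q-p}$ for the uniform constant $(2M)^{q-p}$, so that the whole estimate can be closed inside the single coercive energy $[w]_{W^{s_1,p}(\mb R^N)}^p$.
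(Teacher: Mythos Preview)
Your argument is correct and follows the paper's proof almost verbatim in the energy stage: you test the difference of the two weak formulations with $w=u-v$, use \eqref{eqmon} for the coercive $p$- and $\tl a$-weighted $q$-parts, isolate the $\de$-small remainder $(a-\tl a)[u(x)-u(y)]^{q-1}$, and control it on the local region via the device $|u(x)-u(y)|^{q-1}\le (2M)^{q-p}|u(x)-u(y)|^{p-1}$ together with $|x-y|^{-N-qs_2}\le C|x-y|^{-N-ps_1}$ (from $qs_2\le ps_1$), then Caccioppoli (Lemma~\ref{caccpstn}) and the tail bound on the nonlocal region. This is precisely the paper's computation of $\mb J_1,\mb J_2,\mb J_3$; the only cosmetic point is that the cross region should be $B_1\times(\mb R^N\setminus B_\rho)$ for some $\rho>1$ (the paper takes $\rho=6/5$) so that $|x-y|$ is bounded away from zero there, with the strip $B_\rho\setminus B_1$ absorbed into the local piece.

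The one genuine difference is in the last step. The paper argues by contradiction: it assumes a sequence with $\de_m\to 0$ but $\|u_m-v_m\|_{L^\infty(B_{3/8})}\not\to 0$, obtains uniform $C^{0,\al_1}$ bounds for $u_m$ (Proposition~\ref{holdunspstn}) and for $v_m$ (the stationary case of Theorem~\ref{thminwtl}, available since $\tl a$ satisfies \descref{A.1}), and then invokes Arzel\`a--Ascoli together with $\|w_m\|_{L^{p^*_{s_1}}}\to 0$ to reach a contradiction. Your route is direct and quantitative: the same uniform H\"older bounds plus the elementary interpolation $\|w\|_{L^\infty}\le C[w]_{C^{0,\al_1}}^{1-\theta}\|w\|_{L^{p^*_{s_1}}}^{\theta}$ give an explicit $\vep(\de)$. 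Both are valid; your version has the modest advantage of producing a rate, while the paper's compactness argument avoids writing down the interpolation exponent.
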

\begin{proof}
	From  Lemma \ref{localbdd}, we see that  $u$ and $v$ are locally bounded. Then, the triangle inequality and the relation $\|a\|_\infty-\de\leq \|\tl a\|_\infty\leq \|a\|_\infty+\de$ prove the existence of  $\vep(\de)$. 
	To prove the convergence of $\vep(\de)$ with respect to $\de$, we proceed by the method of contradiction.  
	Suppose that there exist sequences $\{a_m\}$, $\{\tl a_m\}$,  $\{u_m\}$, $\{f_m\}$ and $\{v_m\}$ with the following properties:
	\begin{align}\label{eqi56}
		&(-\De)_p^{s_1}u_m+(-\De)_{q,a_m}^{s_2}u_m=f_m\quad\mbox{in }B_{2}, \\
		&\|u_m\|_{L^\infty(B_{{3/2}})} + \sum_{(\el,s)}
		T^\el_{\el s}(u_m;x_0,{3/2})\leq M, \label{eqi52} \\
		&\| a_m -\tl a_m \|_{L^\infty(\mb R^N\times\mb R^N)}\leq 1/m,\quad \|f_m\|_{L^\Bbbk(B_{3/2})}\leq 1/m \label{eqi53}
	\end{align}
	and 
	\begin{equation}\label{eqi59}
		\begin{array}{rllll}
			(-\De)_p^{s_1}v_m+(-\De)_{q,\tl a_m}^{s_2}v_m=0 \quad\mbox{in }B_1;\quad
			v_m=u_m \quad\mbox{ in }\mb R^N\setminus B_1,	
		\end{array}
	\end{equation}
	but 
	\begin{align}\label{eqi58}
		\liminf_{m\to\infty}\| u_m - v_m\|_{L^\infty(B_{3/8})}> 0.
	\end{align}
	Note that on account of \eqref{eqi52} and $qs_2\leq ps_1$, we have
	\begin{align*}
		&\int_{B_\frac{3}{2}}\int_{B_\frac{3}{2}} \tl a_m \frac{|u_m(x)-u_m(y)|^q}{|x-y|^{N+qs_2}}dxdy \\
		&\leq c\|u_m\|_{L^\infty(B_{\frac{3}{2}})}^{q-p} \int_{B_\frac{3}{2}}\int_{B_\frac{3}{2}} \frac{|u_m(x)-u_m(y)|^p}{|x-y|^{N+ps_1}}dxdy<\infty.
	\end{align*} 
	Consequently $u_m\in X_{u_m,\tl a_m}(B_1, B_{3/2})$ and $w_m:=u_m-v_m\in X_{0,\tl a_m}(B_1, B_{3/2})$. 
	Additionally,  since $v_m\in \mc W_{\tl a_m}(B_{3/2})$ and $u_m\in L^\infty(B_{3/2})$, proceeding similar to the proof of Lemma \ref{localbdd}, we can show that $v_m\in L^\infty(B_1)$ (however the bound may depend on $m$). As before, we see that $w_m\in \mc W_{a_m}(B_{3/2})$ with support contained in $B_1$.
	Therefore taking $w_m=u_m-v_m$ as a test function in the weak formulation of \eqref{eqi56} and \eqref{eqi59}, and using the monotonocity of the fractional operators (see \eqref{eqmon}), we deduce that
	\begin{align}\label{eqi51}
		&c_p\iint_{\mb R^{2N}}  |w_m(x)-w_m(y)|^{p}d\mu_1+c_q  \iint_{\mb R^{2N}}  \tl a_m(x,y)|w_m(x)-w_m(y)|^{q}d\mu_2 \nonumber\\
		&\leq  \iint_{\mb R^{2N}}  ([u_m(x)-u_m(y)]^{p-1}-[v_m(x)-v_m(y)]^{p-1})(w_m(x)-w_m(y))d\mu_1  \nonumber\\ 
		&\ + \iint_{\mb R^{2N}} \tl a_m(x,y) \Big([u_m(x)-u_m(y)]^{q-1}- [v_m(x)-v_m(y)]^{q-1}\Big) \nonumber\\
		&\qquad\qquad\quad\times(w_m(x)-w_m(y))d\mu_2\nonumber\\
		&=\iint_{\mb R^{2N}} (\tl a_m(x,y)-a_m(x,y)) [u_m(x)-u_m(y)]^{q-1}(w_m(x)-w_m(y))d\mu_2 \nonumber\\
		&\quad+\int_{B_2}f_mw_mdx  \nonumber\\
		&\leq \|a_m-\tl a_m\|_{\infty}  \iint_{\mb R^{2N}} |u_m(x)-u_m(y)|^{q-1}|w_m(x)-w_m(y)|d\mu_2 \nonumber\\
		&\quad+\int_{B_2}|f_m(x)w_m(x)|dx.
	\end{align} 
	To estimate the above integrals, we set
	\begin{align*}
		&\mb J_1:=\int_{B_{\frac{6}{5}}}\int_{B_{\frac{6}{5}}} |u_m(x)-u_m(y)|^{q-1}|w_m(x)-w_m(y)|d\mu_2,  \\
		&\mb J_2:=\int_{B_{\frac{6}{5}}}\int_{\mb R^N\setminus B_{\frac{6}{5}}} |u_m(x)-u_m(y)|^{q-1}|w_m(x)|d\mu_2 \quad\mbox{and} \\
		&\mb J_3:=\int_{B_2}f_m(x)w_m(x)dx.
	\end{align*} 
	Using \eqref{eqi52}, the fact $qs_2\leq ps_1$ and H\"older's inequality, we get 
	\begin{align*}
		\mb J_1&\leq C_q\|u_m\|_{L^\infty(B_{\frac{6}{5}})}^{q-p}   \int_{B_{\frac{6}{5}}}\int_{B_{\frac{6}{5}}} \frac{|u_m(x)-u_m(y)|^{p-1}|w_m(x)-w_m(y)|}{|x-y|^{N+s_1p}}dxdy \\
		&\leq C M^{q-p}[u_m]^{p-1}_{W^{s_1,p}(B_{\frac{6}{5}})}   \Bigg(  \int_{B_{\frac{6}{5}}}\int_{B_{\frac{6}{5}}}\frac{|w_m(x)-w_m(y)|^p}{|x-y|^{N+s_1p}}dxdy \Bigg)^\frac{1}{p}.
	\end{align*}
	Moreover, using Caccioppoli type relation of Lemma \ref{caccpstn}, for the choice  $\psi\in C^\infty_c(B_{7/5})$ satisfying $0\leq\psi\leq 1$ and $\psi\equiv 1$ in $B_{6/5}$, we see that 
	\begin{align*}
		&[u_m\psi^\frac{q}{p}]^p_{W^{s_1,p}(B_{\frac{7}{5}})}\nonumber \\
		&\leq  C  \int_{B_{\frac{3}{2}}}\int_{B_{\frac{3}{2}}}  \frac{|\psi(x)-\psi(y)|^p}{|x-y|^{N+ps_1}} dxdy+C  \int_{\mb R^N\setminus B_{\frac{3}{2}}} \frac{|u_m(y)|^{p-1}+1}{|x_0-y|^{N+ps_1}}dy\\
		& \ +C\Big(\int_{B_\frac{7}{5}}|f_m|^\Bbbk\Big)^\frac{p}{(p-1)\Bbbk}+ C  \int_{B_{\frac{3}{2}}}\int_{B_{\frac{3}{2}}} a_m(x,y) \frac{|\psi(x)-\psi(y)|^q}{|x-y|^{N+qs_2}} dxdy \nonumber\\
		& \ + C    \int_{\mb R^N\setminus B_{\frac{3}{2}}}a_{m,2}(y) \frac{|u_m(y)|^{q-1}+1}{|x_0-y|^{N+qs_2}}dy \nonumber\\
		&\leq CM(1+\|a\|_\infty),
	\end{align*}
	where we have set $a_{m,2}(y):=\sup_{x\in B_{2}}a_m(x,y)$ and used \eqref{eqi52} along with H\"older's and Young's inequalities for the term involving $f_m$ (see the estimate of $\mb J_3$ for details). This yields that $[u_m]_{W^{s_1,p}(B_{6/5})}$ is bounded independent of $m$.
	Consequently,  
	\begin{align}\label{eqi54}
		\mb J_1\leq C \Bigg(  \iint_{\mb R^{2N}} \frac{|w_m(x)-w_m(y)|^p}{|x-y|^{N+s_1p}}dxdy \Bigg)^\frac{1}{p}.
	\end{align}
	Furthermore, since $w_m=0$ in $\mb R^N\setminus B_1$, \eqref{eqi52} and the fractional Poincar\'e inequality imply
	\begin{align}\label{eqi55}
		\mb J_2 &\leq C \Bigg[\int_{\mb R^N\setminus B_{\frac{6}{5}}} \frac{dy}{|x-y|^{N+qs_2}}  + \int_{\mb R^N\setminus B_{\frac{6}{5}}} \frac{|u_m(y)|^{q-1}}{|x-y|^{N+qs_2}}dy \Bigg]\int_{B_{1}} |w_m(x)|dx \nonumber\\
		&\leq CM  \Bigg( \int_{B_{1}}|w_m(x)|^pdx \Bigg)^\frac{1}{p} \leq C  \Bigg(\int_{\mb R^N}\int_{\mb R^N} \frac{|w_m(x)-w_m(y)|^p}{|x-y|^{N+s_1p}}dxdy \Bigg)^\frac{1}{p}.
	\end{align}
	Similarly, noting $(p^*_{s_1})'< \Bbbk$ and using \eqref{eqi53} together with the Sobolev embedding, we have
	\begin{align}\label{eqi68}
		\mb J_3\leq \int_{B_{\frac{3}{2}R}}|f_mw_m|dx &\leq C \Big(\int_{B_R}|f_m(x)|^{(p^*_{s_1})'}dx\Big)^\frac{1}{(p^*_{s_1})'} \Big(\int_{B_R}|w_m(x)|^{p^*_{s_1}}dx\Big)^\frac{1}{p^*_{s_1}} \nonumber\\
		&\leq \frac{C}{m} \Bigg(  \iint_{\mb R^{2N}} \frac{|w_m(x)-w_m(y)|^p}{|x-y|^{N+s_1p}}dxdy \Bigg)^\frac{1}{p}.
	\end{align}
	Now combining \eqref{eqi54}, \eqref{eqi55}, \eqref{eqi68} 
	and \eqref{eqi53} with \eqref{eqi51}, we obtain
	\begin{align*}
		\Bigg( \iint_{\mb R^{2N}} \frac{|w_m(x)-w_m(y)|^p}{|x-y|^{N+s_1p}}dxdy \Bigg)^\frac{p-1}{p} \leq C \frac{1}{m}.
	\end{align*}
	Then, by the fractional Sobolev inequality, we have
	\begin{align}\label{eqi57}
		\int_{B_1} |w_m(x)|^{p^*_{s_1}}dx  \to 0 \quad\mbox{as }m\to\infty.
	\end{align}
	Invoking Proposition \ref{holdunspstn} and \eqref{eqi52}, we get that the sequence $\{u_m\}$ is uniformly bounded in $C^{0,\al_1}$ in $B_{1/2}$, for some $\al_1\in (0,s_1)$.  
	We next note that on account of the local boundedness result of Lemma \ref{localbdd} and Caccioppoli type result of Lemma \ref{caccpstn} for $v_m$ and estimating the tail term as in the proof of Proposition \ref{holdunspstn} with the help of \eqref{eqi57} and \eqref{eqi52}, there holds 
	\begin{align*}
		&\|v_m\|_{L^\infty(B_{3/4})}+[v_m]_{W^{s_1,p}(B_{3/4})}+\sum_{(\el,s)}T^\el_{\el s}(v_m;0,\frac{3}{4})^\frac{1}{\el-1} \nonumber\\
		&\leq C \Big( \|v_m\|^\frac{q}{p}_{L^\vartheta(B_{\frac{3}{4}})}+  \|w_m\|^\frac{q}{p}_{L^\vartheta(B_{\frac{3}{4}})} +\sum_{(\el,s)}T^\el_{\el s}(v_m;0,\frac{3}{4})^\frac{1}{\el-1} +1+\|f\|_{L^\Bbbk(B_1)}^\frac{1}{p-1}\Big)\\
		&\leq M_1,
	\end{align*}
	where $\vartheta<p^*_{s_1}$ is given by Lemma \ref{localbdd} and $C,M_1>0$ are independent of $m$ and are non-decreasing functions of $\|a\|_\infty$.
	Consequently, from the stationary counterpart of Theorem \ref{thminwtl} and \eqref{eqinh}, we have 
	\begin{align*}
		\|v_m\|_{C^{0,\al_1}(B_{3/8})} 
		\leq C M_1, 
	\end{align*}
	where $CM_1$ is independent of $m$ and is a non-decreasing function of $\|a\|_\infty$. Thus,
	we get that the sequence $\{v_m\}$ is also uniformly bounded in $C^{0,\al_1}$ norm in $B_{3/8}$. Therefore, by the Arzela-Ascoli theorem, up to a subsequence, $\{u_m-v_m\}$ converges uniformly to some function $w$ in $B_{3/8}$. This together with \eqref{eqi57} implies that $w\equiv 0$, that is, (up to a subsequence),
	\begin{align*}
		\lim_{m\to\infty} \|u_m-v_m\|_{L^\infty(B_{3/8})}=0,	
	\end{align*}
	which contradicts \eqref{eqi58}. This completes the proof of the lemma.
\end{proof}

\begin{Proposition}\label{propap2}
	Suppose that $p\geq 2$,  $q<p^*_{s_1}$ and $qs_2\leq ps_1$.  Let $u$ be a local weak solution to problem \eqref{probSt}  in $B_2\Subset\Om$, such that
	\begin{align*}
		\|u\|_{L^\infty(B_1)}+\sum_{(\el,s)}\int_{\mb R^N\setminus B_1} \frac{|u(x)|^{\el-1}}{|x|^{N+s\el}}dx\leq M,
	\end{align*} 
	with $M\geq 1$. For any $\e>0$, there exists $\de_1=\de_1(\e,N,p,s_1,q,\Bbbk,M,\|a\|_\infty)>0$ such that if 
	\begin{align}\label{eqi60}
		\| a - \tl a\|_{L^\infty(\mb R^N\times\mb R^N)} \leq \de_1\quad\mbox{and}\quad \|f\|_{L^\Bbbk(B_1)}\leq\de_1,
	\end{align}
	for some $0\leq\tl a(\cdot,\cdot)\in L^\infty(\mb R^N\times\mb R^N)$ satisfying the assumption {\rm\textbf{(A.1)}}, then $u\in C^{0,{\Theta_1}-\e}_{\rm loc}(\Om)$, where ${\Theta_1}$ is given by \eqref{eqThetaiot}.  
\end{Proposition}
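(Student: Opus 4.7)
My plan is to prove Proposition \ref{propap2} by a Campanato-type iteration, where at each dyadic scale $r_k = \lambda^k$ (for a suitable $\lambda \in (0,1)$) I compare $u$ with the solution $v_k$ of a frozen problem whose modulating coefficient is translation invariant and whose source term vanishes. The stability Lemma~\ref{lemstab} controls the error $\|u - v_k\|_{L^\infty}$, while the interior higher H\"older estimate (Theorem~\ref{thminwtl} in its stationary form, obtained from the interior arguments of Section~4) gives the oscillation decay $\operatorname{osc}_{B_{\lambda r_k}} v_k \leq C\lambda^{\Theta_1} \operatorname{osc}_{B_{r_k}} v_k$ up to tail contributions. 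Combining these at every step should yield the desired exponent $\Theta_1 - \varepsilon$.

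More concretely, I would first reduce to the normalized setting $B_1$ by replacing $u$ by $(u-(u)_1)/M$ (permissible since both sides of the inequalities are scale-compatible with the $L^\infty$ and tail bounds). Then, for a first step, I invoke Lemma~\ref{lemstab} with $\tilde a$ and $f$ as in the hypothesis to produce $v_0 \in X_u(B_1,B_{3/2})$ solving the frozen problem, and apply the stationary version of Theorem~\ref{thminwtl} to $v_0$, which gives for each $\alpha < \Theta_1$
\[
\operatorname{osc}_{B_{\lambda}} v_0 \leq C_0 \lambda^{\alpha}\bigl(\|v_0\|_{L^\infty(B_{3/8})} + \sum_{(\ell,s)} T^\ell_{\ell s}(v_0;0,3/8)^{\frac{1}{\ell-1}} + 1\bigr) \leq C_1 \lambda^\alpha.
\]
Choosing $\alpha = \Theta_1 - \varepsilon/2$ and $\lambda$ small enough so that $C_1 \lambda^{\alpha} \leq \frac{1}{2}\lambda^{\Theta_1 - \varepsilon}$, and then picking $\delta_1$ small enough (via Lemma~\ref{lemstab}) so that $\|u-v_0\|_{L^\infty(B_{3/8})} \leq \frac{1}{2}\lambda^{\Theta_1-\varepsilon}$, I obtain a constant $c_1$ with $\operatorname{osc}_{B_\lambda} u \leq \lambda^{\Theta_1-\varepsilon}$.

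The iterative step requires rescaling: set $u_1(x) := (u(\lambda x) - c_1)/\lambda^{\Theta_1-\varepsilon}$ on $B_1$; then $u_1$ solves an analogous equation with coefficient $a^{(1)}(x,y) = a(\lambda x, \lambda y)$ and rescaled source $f^{(1)}$ whose $L^\Bbbk$-norm is multiplied by $\lambda^{s_1 p - N/\Bbbk - (p-1)(\Theta_1-\varepsilon)} \leq 1$ thanks to the very definition of $\Theta_1$ in \eqref{eqThetaiot}. Crucially, because of assumption \descref{A.2} (continuity along the diagonal), one can produce a new translation-invariant $\tilde a^{(1)}$ with $\|a^{(1)} - \tilde a^{(1)}\|_\infty \leq \delta_1$ provided $\lambda$ (and hence the radius) has been fixed small enough a priori; this is the precise point where the non-scaling difficulty alluded to in the introduction is overcome by fixing the working scale apriori. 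Normalizing the tails via the $L^\infty$ and tail bounds, one checks that $u_1$ still satisfies the hypotheses, so the previous step applies and yields $\operatorname{osc}_{B_\lambda} u_1 \leq \lambda^{\Theta_1-\varepsilon}$, i.e.\ $\operatorname{osc}_{B_{\lambda^2}} u \leq \lambda^{2(\Theta_1-\varepsilon)}$. Iterating and combining with a standard interpolation over dyadic radii concludes $u \in C^{0,\Theta_1-\varepsilon}(B_{1/2})$, and the conclusion extends to any compact subset of $\Omega$ by a covering argument.

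The main obstacle, as foreshadowed in the introduction, is exactly the rescaling step: since $\mathcal L$ is non-homogeneous in $(p,q)$, the normalization by $\lambda^{\Theta_1-\varepsilon}$ produces an extra multiplicative factor in front of the $(q,s_2)$-term, so one must verify that all the constants appearing in Lemma~\ref{lemstab} and the stationary analogue of Theorem~\ref{thminwtl} remain uniformly bounded along the iteration. This is ensured by the threefold minimum defining $\Theta_1$: the restriction $\Theta_1 \leq qs_2/(q-1)$ makes the $(q,s_2)$-operator on the rescaled function have a factor $\lambda^{qs_2 - (q-1)(\Theta_1-\varepsilon)} \leq 1$, while $\Theta_1 \leq (ps_1-N/\Bbbk)/(p-1)$ handles the source term, and $\Theta_1 \leq 1$ ensures compatibility with the discrete-differential method. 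The smallness parameter $\delta_1$ must then be tuned once and for all in terms of $\varepsilon$, $\lambda$, $M$ and the structural constants via Lemma~\ref{lemstab}, noting that $\varepsilon(\delta) \to 0$ as $\delta \to 0$.
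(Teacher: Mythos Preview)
Your overall strategy---iterate at dyadic scales, comparing at each step with the solution of a frozen problem via Lemma~\ref{lemstab} and then using the stationary higher H\"older estimate---is exactly the paper's approach. However, there is a genuine gap in the iterative step. You write that at each rescaled scale one needs assumption \descref{A.2} to manufacture a \emph{new} translation-invariant $\tl a^{(1)}$ close to $a^{(1)}$. But \descref{A.2} is \emph{not} a hypothesis of this proposition; only the existence of a single $\tl a$ satisfying \descref{A.1} with $\|a-\tl a\|_\infty\le\de_1$ is assumed. The point you miss is that the \emph{same} $\tl a$, rescaled, works at every step: if you set $\tl a_n(x,y):=\la^{n(ps_1-qs_2+(q-p)(\Theta_1-\e))}\tl a(\la^n x,\la^n y)$, then $\tl a_n$ still satisfies \descref{A.1} (translation invariance is preserved under dilation) and, since $qs_2\le ps_1$ together with $q\ge p$ forces the exponent to be non-negative, one has $\|a_n-\tl a_n\|_\infty\le\|a-\tl a\|_\infty\le\de_1$ uniformly in $n$. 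No appeal to \descref{A.2} is needed or allowed here; that assumption enters only later, in the proof of Theorem~\ref{thmingenk}, to produce the initial $\tl a$.

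Two smaller corrections. First, the scaling factor in front of the $(q,s_2)$-part, once the $(p,s_1)$-part is normalized to coefficient $1$, is $\la^{ps_1-qs_2+(q-p)(\Theta_1-\e)}$, not $\la^{qs_2-(q-1)(\Theta_1-\e)}$; its boundedness follows from $qs_2\le ps_1$ and $q\ge p$ alone. Second, the constraint $\Theta_1\le qs_2/(q-1)$ is not what controls this operator factor; it is instead what makes the rescaled $q$-tail $\int_{\mb R^N\setminus B_1}|w_{n+1}|^{q-1}|x|^{-N-qs_2}dx$ stay bounded by $M$ along the iteration (cf.\ \cite[Proposition~6.2]{brascoH}). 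Your attribution of the three pieces of the minimum defining $\Theta_1$ should be adjusted accordingly.
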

\begin{proof} 
	We provide the proof in two steps.\\
	\textbf{Step 1}: \textit{Regularity at the origin}.
	In this step we will prove that for every $\e>0$ and $r\in (0,1/2)$, there exists $\de>0$ such that for the data of the proposition, 	
	\begin{align*}
		\sup_{x\in B_r} |u(x)-u(0)|\leq C_1 r^{{\Theta_1}-\e},
	\end{align*}
	where $C_1=C_1(N,p,s_1,s_2,q,\|a\|_\infty,M,\e)>0$ is a constant. Without loss of generality, we may assume that $u(0)=0$ (otherwise, we consider $\bar u=u-u(0)$ as the weak solution).
	For fixed $\e\in (0,{\Theta_1})$, it suffices to show that there exist $\rho<1/8$ and $\de>0$ such that if $f$, $a$, $\tl a$ and $u$ are as in the proposition, then for any $n\in\mb N\cup\{0\}$, we have 
	\begin{align}\label{eqi61}
		\sup_{x\in B_{\rho^n}} |u(x)|\leq M \rho^{n({\Theta_1}-\e)} \quad\mbox{and}\quad\sum_{(\el,s)}\int_{\mb R^N\setminus B_1} \bigg|\frac{u(\rho^nx)}{\rho^{n({\Theta_1}-\e)}}\bigg|^{\el-1}\frac{dx}{|x|^{N+s\el}}\leq M.
	\end{align}
	The claim is true for $n=0$ by the assumption. Let us assume that \eqref{eqi61} holds for some $n\in\mb N$, then we need to prove it for $n+1$ provided \eqref{eqi60} holds for $\de>0$ small. For $\rho\in(0,1)$, we define 
	\begin{align*}
		w_n(x):=\frac{u(\rho^nx)}{\rho^{n({\Theta_1}-\e)}}.
	\end{align*}
	By the scaling property of the fractional operators, it is easy to verify that $w_n$ satisfies the following equation:
	\begin{align*}
		(-\De)_p^{s_1}w_n + (-\De)_{q,a_n}^{s_2}w_n =f_n,
	\end{align*}
	where
	\begin{align*}
		&a_n(x,y)=\rho^{n(ps_1-qs_2+(q-p)({\Theta_1}-\e))} a(\rho^nx,\rho^ny) \quad \mbox{and}\\
		&f_n(x)= \rho^{n(ps_1-({\Theta_1}-\e)(p-1))} f(\rho^nx).
	\end{align*}
	Moreover, by the induction hypothesis and the fact $\rho<1$, we have
	\begin{align}\label{eqi66}
		\|w_n\|_{L^\infty(B_1)} +	\sum_{(\el,s)}\int_{\mb R^N\setminus B_1} \frac{|w_n(x)|^{\el-1}}{|x|^{N+s\el}}dx\leq M
	\end{align} 
	and since $\Bbbk>N/(ps_1)$, using  \eqref{eqThetaiot}, we also have 
	\begin{align}\label{eqif_n}
		\|f_n\|_{L^\Bbbk(B_1)}\leq \rho^{n(ps_1-({\Theta_1}-\e)(p-1)-N/\Bbbk)} \|f\|_{L^\Bbbk(B_{\rho^n})}\leq \tl\de.
	\end{align}
	Let $v_n$ be the solution to the following problem:
	\begin{equation*}
		(-\De)_p^{s_1}v_n+(-\De)_{q,\tl a_n}^{s_2}v_n=0 \quad\mbox{in }B_{2/3};\quad
		v_n=w_n \quad\mbox{in }\mb R^N\setminus B_{2/3},
	\end{equation*}
	where we have set 
	\begin{align*}
		\tl a_n(x,y)=\rho^{n(ps_1-qs_2+(q-p)({\Theta_1}-\e))} \tl a(\rho^nx,\rho^ny).
	\end{align*}
	On account of \eqref{eqi60} and the assumption $qs_2\leq ps_1$, we have 
	\begin{align*}
		\| a_n -\tl a_n \|_{L^\infty(\mb R^N\times\mb R^N)}\leq \de. 
	\end{align*}
	Therefore, Lemma \ref{lemstab} yields
	\begin{align}\label{eqi67}
		\| v_n-w_n\|_{L^\infty(B_{1/4})}\leq \vep(\de),
	\end{align}
	where $\vep(\de)$ is independent of $n$, provided \eqref{eqi60} holds for small $\de>0$.
	This together with the fact $w_n(0)=0$,  implies that
	\begin{align}\label{eqiwkbd}
		|w_n(x)|&\leq |w_n(x)-v_n(x)|+|v_n(0)-w_n(0)|+|v_n(x)-v_n(0)| \nonumber\\
		&\leq 2\vep(\de)+[v_n]_{C^{0,{\Theta_1}-\e/2}(B_{1/8})}|x|^{{\Theta_1}-\e/2}, \quad\mbox{for all }x\in B_{1/8},
	\end{align}
	where we have used the $C^{0,{\Theta_1}-\e/2}$ regularity of $v_n$ from Theorem \ref{thminwtl}. 
	Next, from the stationary counterpart of \eqref{eqinh}, we have
	\begin{align*}
		&[v_n]_{C^{0,\Theta_1-\frac{\e}{2}}(B_{\frac{1}{8}})} \\
		&\leq C_4 \Big(\|v_n\|_{L^\infty(B_{\frac{1}{4}})}+   [v_n]_{W^{s_1,p}(B_{\frac{1}{4}})}  +1 +\sum_{(\el,s)}T^\el_{\el s}(v_n;0,\frac{1}{4})^\frac{1}{\el-1}\Big)^{j_\infty},
	\end{align*}
	where $C_4=C_4(N,p,s_1,q,\e,M,\|a\|_\infty)>0$ is a constant.   Using \eqref{eqi66} and \eqref{eqi67}, we obtain 
	\begin{align*}
		\|v_n\|_{L^\infty(B_{1/4})} \leq \|v_n-w_n\|_{L^\infty(B_{1/4})} + \|w_n\|_{L^\infty(B_{1/4})}\leq \vep+2M\leq 3M,
	\end{align*}
	provided \eqref{eqif_n} holds.
	For the tail terms, noting $q\leq p^*$, similar to \eqref{eqi57}, we have
	\begin{align*}
		\|v_n-w_n\|_{L^\el(B_{1/2})}\leq C_5,
	\end{align*}
	where $C_5>0$ is independent of $n$. Therefore, on account of \eqref{eqi66} and H\"older's inequality, we deduce that
	\begin{align*}
		&\int_{B_{1/4}^c}\frac{|v_n(y)|^{\el-1}}{|y|^{N+s\el}}dy\\
		&\leq C_6\Big(\int_{B_{1/2}^c}\frac{|w_n(y)|^{\el-1}}{|y|^{N+s\el}}dy+\|w_n\|^{\el-1}_{L^\infty(B_{1/2})}+\int_{B_{1/2}}|v_n(x)-w_n(x)|^{\el-1}dx \Big)\\
		&\leq C_7.
	\end{align*}
	Similar to Lemma \ref{lemstab}, using the Caccioppoli inequality of Lemma \ref{caccpstn}, we can show that the Sobolev seminorm of $v_n$ is bounded. 
	Consequently, from \eqref{eqiwkbd}, we get
	\begin{align}\label{eqi62}
		|w_n(x)|\leq 2\vep(\de)+C_9|x|^{{\Theta_1}-\e/2} \quad\mbox{in } B_{1/9},
	\end{align}
	where $C_9$ is a constant independent of $n$. Next, set
	\begin{align*}
		w_{n+1}(x):=\frac{w_n(\rho x)}{\rho^{{\Theta_1}-\e}} 
		=\frac{u(\rho^{n+1}x)}{\rho^{(n+1)({\Theta_1}-\e)}}.
	\end{align*}
	By choosing $\de>0$ small enough such that $2\vep(\de)<\rho^{\Theta_1}$, from \eqref{eqi62}, we obtain 
	\begin{align*}
		|w_{n+1}(x)|\leq 2\vep\rho^{\e-{\Theta_1}}+C_9\rho^{\e-{\Theta_1}}|\rho x|^{{\Theta_1}-\e/2} \leq (1+C_9|x|^{{\Theta_1}-\e/2})\rho^{\e/2} \quad\mbox{in }B_\frac{1}{8\rho}.
	\end{align*}
	In particular, choosing $\rho>0$ small enough, we get $\|w_{n+1}\|_{L^\infty(B_{1/(8\rho)})}\leq 1$. This on using the definition of $w_{n+1}$ and the fact that $8\rho<1$ proves the first part of  \eqref{eqi61}. For the second part, by noting the bound on ${\Theta_1}$ and proceeding similar to \cite[Proposition 6.2, pp. 838-839]{brascoH}, we can show that
	\begin{align*}
		\int_{\mb R^N\setminus B_1}\frac{|w_{n+1}(x)|^{\el-1}}{|x|^{N+s\el}}dx\leq \Big(\frac{C_2}{\e(\el-1)}+C_3+1\Big)M\rho^{\vep(\el-1)/2}\leq M,
	\end{align*}
	for appropriate choice of $\rho>0$. This completes the iteration process. 
	Here, we stress that the constant $\de>0$, small enough, is chosen such that $2\vep<\rho^{\Theta_1}$ and $\rho$ is also chosen small enough depending only on $N,p,s_1,s_2,q,\Bbbk,M,\e$ and the constant $S$ satisfying $\|a\|_\infty\leq S$.
	\\
	\textbf{Step 2}: {\it Regularity in the ball}.
	We now show the H\"older continuity of $u$ in the whole ball $B_{1/8}$.  Let $z\in B_{1/2}$ and define
	\begin{align*}
		v(x):=L^{-\frac{1}{p-1}}u({x/2}+z), \quad x\in\mb R^N \quad\mbox{with } L=2^{N+2}(1+|B_1|).
	\end{align*}
	Observe that $v$ is a weak solution  of 
	\begin{align*}
		(-\De)_p^{s_1}v +(-\De)_{q,\hat a}^{s_2}v =\widehat f \quad\mbox{in }B_{1},
	\end{align*}
	where 
	\begin{align*}
		\widehat a(x,y)= L^{\frac{q-p}{p-1}}2^{qs_2-ps_1}a\big(\frac{x}{2}+z,\frac{y}{2}+z\big) \quad\mbox{and }
		\widehat f(x)= 2^{-ps_1}L^{-1}f\big(\frac{x}{2}+z\big).
	\end{align*}
	For $0<\e<{\Theta_1}$, let $\de$ be as obtained in Step 1 for the choice $\widehat a$. 
	Assume further that 
	\begin{align*}
		\|a-\tl a\|_{L^\infty(\mb R^N\times\mb R^N)}\leq {2^{ps_1-qs_2}}{L^\frac{p-q}{p-1}}\de.
	\end{align*}
	Setting $\widehat{\tl a}(x,y)=L^{\frac{q-p}{p-1}}2^{qs_2-ps_1}\tl a({x/2}+z,{y/2}+z)$, we have
	\begin{align*}
		&\|\widehat a -\widehat {\tl a}\|_{L^\infty(\mb R^N\times\mb R^N)} \leq L^\frac{q-p}{p-1}2^{qs_2-ps_1} \|a-\tl a\|_{L^\infty(\mb R^N\times\mb R^N)}\leq \de \quad\mbox{and}\\
		&\|\widehat f\|_{L^\Bbbk(B_1)} \leq  \frac{2^{N/\Bbbk-ps_1}}{L} \|f\|_{L^\Bbbk(B_1)}\leq \de.   
	\end{align*}
	By construction, we have
	\begin{align*}
		&\|v\|_{L^\infty(B_1)}\leq M \quad\mbox{and}\\
		&\int_{\mb R^N\setminus B_1} \frac{|v(x)|^{\el-1}}{|x|^{N+s\el}}dx\leq \frac{1}{L^\frac{\el-1}{p-1}} 	\int_{\mb R^N\setminus B_1} \frac{|u(x)|^{\el-1}}{|x|^{N+s\el}}dx+\frac{2^N|B_1|}{L^\frac{\el-1}{p-1}} \|u\|_{L^\infty(B_1)}^{\el-1}\leq M.
	\end{align*}
	Applying Step 1 to $v$, we get 
	\begin{align*}
		\sup_{x\in B_r}|v(x)-v(0)|\leq C_1 r^{{\Theta_1}-\e}, \quad r\in (0,1/2).
	\end{align*}
	This implies that, for any $z\in B_{1/2}$, 
	\begin{align}\label{eqi63}
		\sup_{x\in B_r}|u(x)-u(z)|\leq C_1 L^{\frac{1}{p-1}}r^{{\Theta_1}-\e}, \quad r\in (0,1/4).	
	\end{align}
	Next, we fix $x,y\in B_{1/8}$ and set $|x-y|=r$. We observe that $r<1/4$, then setting $z=(x+y)/2$, we get from \eqref{eqi63} that
	\begin{align*}
		|u(x)-u(y)|&\leq |u(x)-u(z)|+|u(y)-u(z)|\\
		&\leq 2\sup_{\xi\in B_r(z)}|u(\xi)-u(z)|\leq 2CL^\frac{1}{p-1}|x-y|^{{\Theta_1}-\e}.
	\end{align*}
	This proves the proposition for the choice $\de_1=\min\Big\{\de,2^{ps_1-qs_2}(2^{N+2}(1+|B_1|))^\frac{p-q}{p-1}\de\Big\}$.  
\end{proof}

\textbf{Proof of Theorem \ref{thmingenk}}:
Let $R_0\in (0,1)$ be fixed such that $B_{R_0}\equiv B_{R_0}(x_0)\Subset\Om$. For  $\al\in (0,{\Theta_1})$ and  $\e=\Theta_1-\al$, let the constant $\de_1=\de_1(\e,N,p,s_1,s_2,q,\|a\|_\infty)>0$ be given by Proposition \ref{propap2}. We set 
\begin{align*}
	\mc M=\mc M(R_0):=& \mc C\|u\|_{L^\infty(B_{R_0})} +\Big[\frac{R_0^{s_1p-\frac{N}{\Bbbk}}\|f\|_{L^\Bbbk(B_{R_0})}}{\de_1}\Big]^\frac{1}{p-1} \\
	&+\sum_{(\el,s)}2^\frac{N+s\el}{\el-1} T^{\el}_{\el s}(u;x_0,R_0)^\frac{1}{\el-1}+1,
\end{align*}
where $\mc C:=\max\Big\{1,\Big(\frac{N|B_1|}{ps_1}\Big)^{1/(p-1)},\Big(\frac{N|B_1|}{qs_2}\Big)^{1/(q-1)} \Big\}$.
We now choose $R\in (0,R_0/2)$ such that 
\begin{align}\label{eqia65}
	\mc M^{q-p}R^{ps_1-qs_2}\leq 1,
\end{align}
(thanks to the condition $ps_1>qs_2$).
By the assumption {\rm\textbf{(A.2)}}, there exist $\varrho,h_{\de_1}>0$ such that 
\begin{align}\label{eq cont a}
	\sup_{\substack{x,y\in B_R(x_0)\\ |x-y|\leq \varrho}} |a(x+h,y+h)-a(x,y)|\leq \de_1 \quad\mbox{for all }h\in B_{h_{\de_1}}(0).
\end{align}
Fix $z\in B_\frac{R}{2}(x_0)$ and $r_z\in (0,1)$ small enough such that $r_z\leq \min\{\frac{R}{2},\frac{\varrho}{2},h_{\de_1}\}$ and $B_{r_z}(z)\subset B_{R/2}(x_0)$. Then, for all $x,y\in B_{r_z}(z)$, we see that $(z-y)$ and $(z-x)\in B_{h_{\de_1}}(0)$. Therefore,
\begin{align*}
	&\sup_{x,y\in B_{r_z}(z)} |a(x-y+z,z)-a(x,y)|\leq\de_1 \quad\mbox{and}\\
	&\sup_{x,y\in B_{r_z}(z)} |a(z,y-x+z)-a(x,y)|\leq\de_1.
\end{align*}
Consequently, using the symmetry of $a(\cdot,\cdot)$, we see that
\begin{align*}
	a_z(x,y):=\frac{1}{2}\big(a(x-y+z,z)+a(y-x+z,z)\big)
\end{align*}
is translation invariant in $B_{r_z}(z)\times B_{r_z}(z)$ and satisfies 
\begin{align*}
	\|a-a_z\|_{L^\infty(B_{r_z}(z)\times B_{r_z}(z))}\leq \de_1.
\end{align*}
Defining
\begin{align*}
	\tl a(x,y):=\begin{cases}
		a_z(x,y) &\mbox{if }(x,y)\in B_{r_z}(z)\times B_{r_z}(z),\\
		a(x,y) &\mbox{if }(x,y)\notin B_{r_z}(z)\times B_{r_z}(z),
	\end{cases}
\end{align*}
it follows that $\tl a(\cdot,\cdot)$ satisfies the assumption {\rm\textbf{(A.1)}} in $B_{r_z}(z)\times B_{r_z}(z)$ and 
\begin{align}\label{eqia64}
	\|a-\tl a\|_{L^\infty(\mb R^N\times \mb R^N)}\leq \de_1.
\end{align}
We set \begin{align*}
	u_{z}(x):=\frac{1}{\mc M}u(r_zx+z) \quad\mbox{for }x\in B_2.
\end{align*}
By the scaling property of the fractional operators, it is easy to observe that $u_{z}$ solves
\begin{align*}
	(-\De)_p^{s_1} u_{z}+(-\De)_{q,a_1}^{s_2}u_{z}=f_z \quad\mbox{in }B_2,
\end{align*}
where $a_1(x,y)=\mc M^{q-p}r_z^{s_1p-s_2q}a(r_zx+z,r_zy+z)$ and $f_z(x):=\frac{r_z^{ps_1}}{\mc M^{p-1}}f(r_zx+z)$.
Moreover, since $z\in B_{R/2}(x_0)$, we see that 
\begin{align*}
	|y-z|\geq |y-x_0|\Big( 1-\frac{|z-x_0|}{|y-x_0|} \Big)\geq \frac{|y-x_0|}{2}\quad\mbox{for all }y\in B_{R_0/2}(x_0)^c.
\end{align*}
Consequently, 
\begin{align}\label{eq app scl}
	&T_{s\el}^\el(u;z,r_z) \nonumber\\
	&\leq r_z^{\el s} \left[2^{N+\el s} \int_{B_\frac{R_0}{2}(x_0)^c}\frac{|u(y)|^{\el-1}\,dy}{|y-x_0|^{N+\el s}} + \|u\|_{L^\infty(B_\frac{R_0}{2}(x_0))}^{\el-1} \int_{B_{r_z}(z)^c} \frac{dy}{|y-z|^{N+\el s}}\right] \nonumber\\
	&\leq 2^{N+\el s} T_{s\el}^\el(u;x_0,R_0/2) + \Big(\frac{N|B_1|}{\el s}\Big) \|u\|_{L^\infty(B_{R_0/2}(x_0))}^{\el-1}.
\end{align}
Recalling the definition of $\mc M$ and using \eqref{eq app scl},  we have
\begin{align*}
	\|u_{z}\|_{L^\infty(B_1)}\leq 1, \quad \sum_{(\el,s)}T^{\el}_{\el s}(u_z;0,1)^{1/(\el-1)}\leq 1\quad \mbox{and}\quad\|f_z\|_{L^\Bbbk(B_1)}\leq\de_1.
\end{align*}
Furthermore, for $\tl a_1(x,y):=\mc M^{q-p}r_z^{s_1p-s_2q}\tl a(r_zx+z,r_zy+z)$, there holds
\begin{align*}
	\|a_1-\tl a_1\|_{L^\infty(\mb R^N\times\mb R^N)}= r_z^{s_1p-s_2q}\mc M^{q-p}\|a-\tl a\|_{L^\infty(\mb R^N\times\mb R^N)}\leq \de_1,
\end{align*}
where we have used \eqref{eqia64} and \eqref{eqia65}. Therefore,  from Proposition \ref{propap2}, we obtain
\begin{align*}
	[u_{z}]_{C^{0,{\Theta_1}-\e}({B_{1/8}})} \leq C_1(N,p,q,s_1,s_2,\Bbbk,\|f\|_{L^\Bbbk},\|a\|_\infty,\e).
\end{align*}
By scaling back to $u$ and since $z\in B_{R/2}(x_0)$ was arbitrary, by a standard covering argument, we can complete the proof of the theorem. 
\begin{Remark}
	It is clear from the above proof that for $\Bbbk=\infty$, Theorem \ref{thmingenk} holds for the case $ps_1=qs_2$, too. Indeed, in this case, we approximate only the modulating coefficient $a(\cdot,\cdot)$ in Lemma \ref{lemstab} by keeping $f$ fixed. In the proof of Theorem \ref{thmingenk}, we proceed with $\mc M(R):=\mc C\|u\|_{L^\infty(B_{R})} +1+\sum_{(\el,s)}2^\frac{N+s\el}{\el-1} T^{\el}_{\el s}(u;x_0,R)^\frac{1}{\el-1},$ instead of $\mc M(R_0)$ and take ${\de_1\mc M(R)^{p-q}}$ in place of $\de_1$ in \eqref{eq cont a}. 
\end{Remark}

\subsection{Boundary regularity}
We define the distance function as $d(x):={\rm dist}(x,\mb R^N\setminus\Om)$. Then, we have the following estimate
\begin{Proposition}\label{propupper}
	Let $\Om\subset\mb R^N$ be a bounded domain with $C^{1,1}$ boundary $\pa\Om$ and for $s_1<q's_2$, we assume that $a(\cdot,\cdot)\in C^{0,\ba}(\mb R^N\times\mb R^N)$ with $\ba>s_2$.
	Let $u\in \mc W(\mb R^N)\cap L^\infty_{\rm loc}(\Om)$ be such that $|\mc Lu| \leq K$, weakly in $\Om$, for some $K>0$. Then, for all $0<\sg<s_1$, there exists a constant $\mc C_u>0$ (depending only on data of the problem and $\|u\|_{L^\infty_{\rm loc}(\Om)}$) such that
	\[ |u| \leq \mc C_u \;d^{\sg} \quad\mbox{in } \Om. \]
\end{Proposition}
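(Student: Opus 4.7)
The approach is the classical barrier-plus-comparison argument, adapted to the fractional double phase operator. Since $\pa\Om\in C^{1,1}$, the uniform interior ball condition provides $\rho_0>0$ such that for every $x_0\in\pa\Om$ there is an interior tangent ball $B_{\rho_0}(y_0)\subset\Om$ touching $\pa\Om$ at $x_0$, with $d(x)\asymp \rho_0-|x-y_0|$ for $x$ close to $x_0$. For fixed $\sg\in(0,s_1)$, I would work with the explicit barrier
\begin{equation*}
\Phi_\sg(x):=\bigl((\rho_0^2-|x-y_0|^2)_+\bigr)^{\sg},
\end{equation*}
which satisfies $\Phi_\sg(x)\asymp d(x)^{\sg}$ near $x_0$ inside $B_{\rho_0}(y_0)$ and vanishes outside $\overline{B_{\rho_0}(y_0)}$.

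The heart of the argument is the pointwise lower bound
\begin{equation*}
\mc L\Phi_\sg(x)\,\geq\,c_0\,d(x)^{\sg(p-1)-ps_1}-C_0 \quad \text{in }A_r:=B_r(x_0)\cap B_{\rho_0}(y_0)
\end{equation*}
for small $r>0$, with exponent $\sg(p-1)-ps_1<0$ since $\sg<s_1<ps_1/(p-1)$, so that the right-hand side diverges positively as $d(x)\to 0^+$. For the $p$-contribution this follows by adapting the fractional $p$-Laplacian barrier computation of \cite{JDSjga} (in the spirit of \cite{iann}). For the $q$-contribution I would split into two regimes. When $s_1\geq q's_2$, the hypothesis $qs_2\leq ps_1$ together with $\sg<s_1$ forces the $q$-term to be of order at worst $d^{\sg(q-1)-qs_2}$, which is dominated by (or matches) the $p$-term and controlled by $\|a\|_{\infty}$. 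When $s_1<q's_2$, I would write $a(x,y)=a(x,x)+\bigl(a(x,y)-a(x,x)\bigr)$ and use $|a(x,y)-a(x,x)|\leq [a]_{C^{0,\ba}}|x-y|^{\ba}$ with $\ba>s_2$: the symmetric principal part reduces to the translation invariant case handled above, while the remainder has the integrable kernel $|x-y|^{-N-qs_2+\ba}$ and gives only a bounded contribution.

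With this estimate in hand, pick $\de>0$ small so that $\mc L\Phi_\sg\geq K$ throughout $S:=\{x\in A_r:d(x)<\de\}$, and then take $M>0$ large enough that $M\Phi_\sg\geq u$ on $\mb R^N\setminus S$: inside $\Om$ this uses the local $L^\infty$-bound of $u$, and outside $\Om$ the Dirichlet condition $u\equiv 0$ (from the setting of Corollary \ref{corbdry}). The monotonicity inequality \eqref{eqmon} yields a weak comparison principle for $\mc L$, obtained by testing the difference of the weak equations with $(u-M\Phi_\sg)_+\in\mc W$. Applying it in $S$ gives $u\leq M\Phi_\sg$ there, hence $u(x)\leq C d(x)^{\sg}$ near $x_0$; the reverse bound follows by the same argument applied to $-u$. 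By compactness of $\pa\Om$, finitely many such boundary caps combine with the interior local $L^\infty$-bound of $u$ to produce the global inequality $|u|\leq \mc C_u d^{\sg}$ in $\Om$.

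The main obstacle is the estimate of $\mc L\Phi_\sg$ in the regime $s_1<q's_2$, where the modulating weight $a(x,y)$ does not commute with the double-phase singularity of the barrier at the tangent point; the Hölder hypothesis $\ba>s_2$ is exactly what is needed to isolate a translation-invariant principal part and absorb the rest into an integrable remainder. A secondary subtlety is justifying the weak comparison principle at the ambient regularity; this is routine thanks to \eqref{eqmon} but requires care in verifying that $(u-M\Phi_\sg)_+$ is an admissible test function in $\mc W$.
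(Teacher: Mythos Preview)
Your plan is correct and matches the paper's approach: build a $d^\sg$-type barrier (the paper works with $d^\al$ directly after a $C^{1,1}$ boundary flattening, following \cite[Lemma~3.12]{JDSacv}, rather than your interior-ball variant $\Phi_\sg$, but the computations are equivalent), in the regime $s_1<q's_2$ split $a(x,y)=a(x,x)+\bigl(a(x,y)-a(x,x)\bigr)$ and use the H\"older gain to control the remainder, then conclude by comparison as in \cite[Proposition~3.14]{JDSacv}. One small correction: the remainder is not integrable from the kernel $|x-y|^{-N-qs_2+\ba}$ alone --- you also need the factor $|\Phi_\sg(x)-\Phi_\sg(y)|^{q-1}\leq C|x-y|^{\sg(q-1)}$, so the actual integrability condition is $\ba>qs_2-\sg(q-1)$ (exactly as in the paper), which under the hypothesis $\ba>s_2$ is guaranteed only for $\sg$ close to $s_1$; this is harmless, since the estimate for one $\sg$ near $s_1$ immediately implies it for all smaller $\sg$.
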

\begin{proof}
	We first obtain a lower bound on the quantity $(-\De)_{q,a}^{s_2} d^{\al}$, for all $\al\in [s_2,1)$.  Indeed, proceeding as in  \cite[Lemma 3.12]{JDSacv}, the case $\al>q's_2$ is exactly the same as Case (i) of the proof presented there, that is,
	\begin{align}\label{eqi80}
		(-\De)_{q,a}^{s_2} \ d^\al = g \in L^\infty(\Om_\rho) \quad\mbox{weakly in }\Om_\rho,
	\end{align} 
	where $\Om_\rho:=\{x\in\Om \  : \ d(x)<\rho\}$.
	For the case $\al< q's_2$, a similar procedure to that of \cite[Lemma 3.12]{JDSacv} yields that 
	\begin{align}\label{eqi81}
		(-\De)_{q,\tl a(\cdot)}^{s_2} \ d^\al \geq C_1 d^{\al(q-1)-qs_2}  \quad\mbox{weakly in }\Om_\rho,
	\end{align} 
	where $\tl a(x)=a(x,x)$ and $C_1$ is a positive constant. Furthermore, proceeding with the same notation as in \cite[Lemma 3.12]{JDSacv},
	the integrand $\tl I_{2,\e}(X)$ is estimated as below: 
	\begin{align*}
		\tl I_{2,\e}(X) 
		&= \int_{B_\e(X)^c\cap B_{2\rho}} a(\Phi(X),\Phi(X)) \frac{[(X_N)_+^{\al}-(Y_N)_+^{\al}]^{q-1}}{|\na\Phi(X)(X-Y)|^{N+qs_2}} J_\Phi(X) dY \nonumber\\
		&\,\, + \int_{B_\e(X)^c\cap B_{2\rho}} \big[a(\Phi(X),\Phi(Y)) -a(\Phi(X),\Phi(X)) \big] \nonumber\\ &\qquad\quad\times\frac{[(X_N)_+^{\al}-(Y_N)_+^{\al}]^{q-1}}{|\na\Phi(X)(X-Y)|^{N+qs_2}} J_\Phi(X) dY \nonumber\\
		&=: I_{5,\e}(X)+ I_{6,\e}(X). \end{align*} 
	Using the $\ba$-H\"older continuity of $a(\cdot,\cdot)$ and the Lipschitz nature of $\Phi$ and $\na\Phi$, we obtain
	\begin{align*}
		|I_{6,\e}(X)|\leq C\int_{B_\e(X)^c\cap B_{2\rho}} \frac{1}{|X-Y|^{N+qs_2-\al(q-1)-\ba}}dY=C_\rho<\infty,
	\end{align*}
	where we have used the fact that $\ba>qs_2-\al(q-1)$. The term $I_{5,\e}(X)$ is handled as in the estimate to prove \eqref{eqi81}. This concludes that, for $\al<q's_2$, 
	\begin{align}\label{eqi82}
		(-\De)_{q,a}^{s_2} \ d^\al \geq C_2 d^{\al(q-1)-qs_2}  \quad\mbox{weakly in }\Om_\rho.
	\end{align} 
	Also, note that $d^\al\in W^{s_1,p}(\Om')\cap W^{s_2,q}(\Om')$, for some $\Om\Subset\Om'$ and for all $s_2-1/q<\al<1$. Consequently, proceeding similar to  \cite[Proposition 3.14]{JDSacv} and taking into account the local boundedness of $u$ together with \eqref{eqi80} and \eqref{eqi82}, we can complete the proof of the proposition. 
\end{proof}
Finally, we have our global H\"older continuity result.\\
\textbf{Proof of Corollary \ref{corbdry}}:  The proof can be completed by standard covering arguments and using the interior regularity result of Theorem \ref{thmingenk} (note that $\Bbbk=\infty$, here) together with the boundary estimate of Proposition \ref{propupper}.

\begin{appendix}
	
 \section{On the existence result}
	\renewcommand{\theequation}{E.\arabic{equation}}
	
	\setcounter{equation}{0}
Here, we give the functional setup and a basic framework for the existence result for the following problem:
	\begin{equation*}
		\left\{ \begin{array}{rl}
			\pa_t u+ (-\De)_p^{s_1}u+(-\De)_{q,a}^{s_2}u&=f \quad\mbox{in }\Om\times I,\\
			u&=g \quad\mbox{in }(\mb R^N\setminus \Om)\times I,\\
			u(\cdot,t_0)&=u_0(\cdot) \quad\mbox{in }\Om,
		\end{array}
		\right. \tag{$GNP$}\label{genPrb}
	\end{equation*}
 where $I=[t_0,t_1]$, $u_0\in L^2(\Om)$, $f\in \big(\mc Y(I,\mc W(\Om'))\big)^*$, $g\in \mc Y(I,\mc W(\Om'))\cap L^{p-1}(I;L^{p-1}_{s_1p}(\mb R^N))\cap L^{q-1}(I;L^{q-1}_{s_2q,a}(\mb R^N))$ and $\pa_t g\in \big(\mc Y(I,\mc W(\Om'))\big)^*$, for $\Om\Subset\Om'\subset\mb R^N$.
	\begin{Definition}
	We say that a function $u\in \mc Y(I,\mc W(\Om'))\cap C(I;L^2(\Om))\cap L^{p-1}(I;L^{p-1}_{s_1p}(\mb R^N))\cap L^{q-1}(I;L^{q-1}_{s_1q,a}(\mb R^N))$ is a weak solution to  \eqref{genPrb} if the following hold:
		\begin{itemize}
			\item[(i)] $u\in X_{\rm g(t)}(\Om,\Om')$ for a.e. $t\in I$, where ${\rm g(t)}(x)=g(x,t)$;
			\item[(ii)] $u(\cdot,t)\to u_0$ in $L^2(\Om)$, as $t\to t_0$;
			\item[(iii)] 	for all $J=[T_0,T_1]$ and every $\phi\in \mc Y(J;X_0(\Om,\Om'))\cap C^1(J; L^2(\Om))$
			\begin{align*}
				&-\int_{J}\int_{\Om} u(x,t)\pa_t \phi(x,t)dxdt \\
				&\quad+ \int_{J} \iint_{\mb R^{2N}} \frac{[u(x,t)-u(y,t)]^{p-1}}{|x-y|^{N+s_1p}}(\phi(x,t)-\phi(y,t))dxdydt \nonumber\\
				&\quad+\int_{J} \iint_{\mb R^{2N}}a(x,y) \frac{[u(x,t)-u(y,t)]^{q-1}}{|x-y|^{N+s_2q}}(\phi(x,t)-\phi(y,t))dxdydt \nonumber \\
				&= \int_{\Om} u(x,T_0) \phi(x,T_0)dx- \int_{\Om} u(x,T_1) \phi(x,T_1)dx+  \langle f(\cdot,\cdot),\phi(\cdot,\cdot)\rangle_{\mc Y, \mc Y^*}.
			\end{align*}
		\end{itemize}
	\end{Definition}
\noindent	Proceeding similar to \cite[Proposition 1.2, Chapter III]{showalter}, we can show that 
	\begin{align*}
		\mb W(I):= \{v\in \mc Y(I;\mc W(\Om)) \ : \ u^\prime\in \big(\mc Y(I,\mc W(\Om))\big)^*\}\subset C(I;L^2(\Om)).
	\end{align*}
	Additionally, for $v\in \mb W(I)$,
	\begin{align*}
		t\mapsto \|v(t)\|_{L^2(\Om)}^2 \mbox{ is absolutely continuous and } \frac{d}{dt}\|v(t)\|_{L^2(\Om)}^2=2\langle v^\prime(t), v(t)\rangle.
	\end{align*}
	We have the following existence result.
	\begin{Theorem}\label{thm exst Appn}
		Let $p\geq 2$ and suppose that 
		\begin{align*}
			\lim_{t\to t_0} \| g(\cdot,t)-g_0 \|_{L^2(\Om)}=0 \quad\mbox{for some }g_0\in L^2(\Om).
		\end{align*}
		Then, there exists a unique solution to problem \eqref{genPrb}.
	\end{Theorem}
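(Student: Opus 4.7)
The plan is to first reduce the problem to homogeneous exterior data by substituting $w=u-g$, so that $w(\cdot,t)\in X_0(\Om,\Om')$ for a.e. $t\in I$, $w(\cdot,t_0)=u_0-g_0$, and $w$ must satisfy the equation
\[
\pa_t w+\mc L(w+g)=f-\pa_t g \quad\text{in }\Om\times I,
\]
in the weak sense. This places us in the classical setting of an abstract evolution equation in the Gelfand triple $V\hookrightarrow H\hookrightarrow V^*$, where $V:=X_0(\Om,\Om')$ (equipped with the Luxemburg norm coming from $\vrh$), $H:=L^2(\Om)$, and the natural energy space is $\mc Y(I;V)$ intersected with the two tail-spaces $L^{p-1}(I;L^{p-1}_{s_1p}(\mb R^N))$ and $L^{q-1}(I;L^{q-1}_{s_2q,a}(\mb R^N))$. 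The first technical task is to verify that $V$ is a reflexive, separable, uniformly convex Banach space, which follows from the discussion after \eqref{normLxm} using standard Musielak–Orlicz modular-space arguments, together with a fractional Poincaré inequality in the $p$-part of the modular (since functions vanish outside $\Om$) to ensure that $[\cdot]_{\mc W_a(\Om')}$ controls the full norm on $X_0(\Om,\Om')$.

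The next step is to introduce the (time-independent) nonlinear operator $\mc A:V\to V^*$ induced by $\mc L$ via
\[
\langle \mc A(u),\phi\rangle=\sum_{(\el,s)}\iint_{\mb R^N\times\mb R^N} a_\el(x,y)\,\frac{[u(x)-u(y)]^{\el-1}(\phi(x)-\phi(y))}{|x-y|^{N+s\el}}\,dxdy,
\]
and to establish its structural properties: (i) boundedness from $V$ into $V^*$, via Hölder's inequality in each integral and the modular inequality $\mc H_a(u)\le\max\{[u]^p,[u]^q\}$; (ii) strict monotonicity, which is immediate from \eqref{eqmon} applied to both kernels, and which quantitatively produces a term of the form $c\,\mc H_a(u-v)$; (iii) hemicontinuity, which reduces to a dominated convergence argument along the line $\la\mapsto u+\la v$; and (iv) coercivity, namely $\langle \mc A(u),u\rangle\ge c\,\mc H_a(u)-C$, which combined with the Poincaré inequality on $X_0$ gives $\langle \mc A(u),u\rangle/\|u\|_V\to\infty$ as $\|u\|_V\to\infty$. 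One must then handle the "shift" by $g$: the map $u\mapsto \mc A(u+g(\cdot,t))$ inherits the same monotonicity and hemicontinuity, while coercivity is preserved after absorbing a lower-order term involving $g$ (using Young's inequality in the modular sense and the assumption $g\in \mc Y(I;\mc W(\Om'))$).

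With these ingredients I would invoke the abstract existence theorem for evolution equations with monotone hemicontinuous coercive operators on $\mc Y(I;V)$ (see e.g. Lions or Showalter, Chapter III, Prop. 4.1). This yields a $w\in \mc Y(I;V)$ with $\pa_t w\in (\mc Y(I;V))^*$ satisfying the weak formulation and $w(t_0)=u_0-g_0$ in $L^2(\Om)$, using the inclusion $\mb W(I)\subset C(I;L^2(\Om))$ already noted in the excerpt. Setting $u:=w+g$ produces the desired weak solution; the tail integrability $u\in L^{\el-1}(I;L^{\el-1}_{s\el,a_\el}(\mb R^N))$ comes from the corresponding integrability of $g$ plus the fact that $w$ vanishes outside $\Om$. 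Uniqueness is then a one-line consequence of testing the difference $w_1-w_2$ against itself and using the integration-by-parts formula $\tfrac{d}{dt}\|\cdot\|_{L^2}^2=2\langle\pa_t\cdot,\cdot\rangle$ together with the strict monotonicity of $\mc A$ (here $p\ge 2$ is used crucially to get the clean monotonicity constant via \eqref{eqmon}).

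The main obstacle will be proving coercivity of $\mc A$ on $V$: because the modular is non-homogeneous (it mixes exponents $p$ and $q$ through the weight $a(\cdot,\cdot)$), the usual homogeneous argument does not apply, and one has to split into the regimes $\|u\|_V\ge 1$ and $\|u\|_V<1$ and use the two-sided modular bound $\min\{[u]^p,[u]^q\}\le \mc H_a(u)\le\max\{[u]^p,[u]^q\}$ from property (i) after \eqref{normLxm}, combined with a weighted Poincaré inequality controlling the $L^q(W_a)$-part of $\|\cdot\|_{\mc W_a(\Om)}$. A secondary subtlety is checking that the time regularization of test functions used implicitly in Lions' theorem is compatible with our definition of weak solution — but this is already essentially handled by the convolution machinery of Lemma \ref{cacciopp} and Lemma \ref{auxtest}, which can be reused here to verify the integration-by-parts identity $\int_J\int_\Om u\,\pa_t\phi=-\int_J\langle\pa_t u,\phi\rangle+[\int_\Om u\phi]_{T_0}^{T_1}$ for admissible test functions $\phi$.
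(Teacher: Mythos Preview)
Your proposal is correct and follows essentially the same route as the paper: reduce to zero exterior data via the shift $w=u-g$, verify that the induced operator $\mc A(v,t)=\mc A_t(v+{\rm g}(t))$ on $X_0(\Om,\Om')$ is monotone, hemicontinuous, bounded and coercive, and then invoke the abstract existence result (the paper cites \cite[Theorem~A.3]{brascoP}, you cite Showalter/Lions). The paper's sketch devotes most of its space to precisely the two points you flag as delicate---boundedness and coercivity in the non-homogeneous modular setting---and handles coercivity by splitting the double integral into the piece over $\Om'\times\Om'$ and the tail piece over $\Om\times(\mb R^N\setminus\Om')$, so that the latter produces exactly the weighted term $\int_\Om W(x)|v(x)|^q\,dx$ built into the $\mc W(\Om')$-norm; your ``weighted Poincar\'e'' remark is the same observation in different language.
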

	\begin{proof}
	The proof is similar to \cite[Theorem A.3]{brascoP}. Here, we highlight only the key steps, for instance, boundedness and coercivity of the operators involved in the proof.	For a.e. $t\in I$, we define  $\mc A_t: X_{{\rm g}(t)}(\Om,\Om')\to (\mc W(\Om'))^*$ by 
		\begin{align*}
			\langle \mc A_t(v),\phi\rangle =
			&\sum_{(\el,s)}\int_{\Om'}\int_{\Om'} a_\el(x,y) \frac{[v(x)-v(y)]^{\el-1}}{|x-y|^{N+s\el}}(\phi(x)-\phi(y))dxdy \nonumber\\
			&+2 \sum_{(\el,s)}\int_{\Om}\int_{\mb R^N\setminus\Om'} a_\el(x,y) \frac{[v(x)-g(y,t)]^{\el-1}}{|x-y|^{N+s\el}}\phi(x)dxdy \nonumber\\
			&=:	\langle \mc A^p_t(v),\phi\rangle
			+	\langle \mc A^q_t(v),\phi\rangle.
		\end{align*}
	Applying H\"older's inequality and recalling the definition of $W$ (stated as in \eqref{eqw}), we obtain
		\begin{align}\label{eq121}
			&|\langle \mc A^q_t(v),\phi\rangle| \nonumber\\&\leq \int_{\Om'}\int_{\Om'} a(x,y) \frac{|v(x)-v(y)|^{q-1}}{|x-y|^{N+s_2q}}|\phi(x)-\phi(y)|dxdy \nonumber\\
			&\quad +c(q) \int_{\Om}\int_{\mb R^N\setminus\Om'} a(x,y) \frac{|v(x)|^{q-1}+|g(y,t)|^{q-1}}{|x-y|^{N+s_2q}}|\phi(x)|dxdy  \nonumber\\
			&\leq c [v]^{q-1}_{W^{s_2,q}_{a}(\Om')} [\phi]_{W^{s_2,q}_{a}(\Om')} + c\Big(\int_{\Om}W(x)|v(x)|^{q}dx \Big)^\frac{1}{q'} \Big(\int_{\Om}W(x)|\phi(x)|^{q}dx \Big)^\frac{1}{q} \nonumber\\
			&\quad+ c \Big(\int_{\Om}|\phi(x)|^{p}dx \Big)^\frac{1}{p}  T^{q}_{qs_2,a}(g;z,r)^{p/(p-1)} \nonumber\\
			&\leq c\big(\|v\|^{q-1}_{\mc W(\Om')}+ T^{q}_{qs_2,a}(g;z,r)^{p/(p-1)}\big) \|\phi\|_{\mc W(\Om')},
		\end{align}
	 where $z\in \Om$ and $r={\rm dist}(\Om,\pa\Om')>0$. An analogous result holds for the $p$-term, too. Consequently,  $\mc A_t$ is a well defined operator. 
		Next, we define $\mc A: X_0(\Om,\Om')\to (\mc W(\Om'))^*$ by 
		\begin{align*}
			\mc A(v,t)=\mc A_t(v+{\rm g}(t)).
		\end{align*}
	Then, the operator $\mc A$ is monotone and hemicontinuous (see \cite[Lemma 3]{korvenpaa}).
	On account of H\"older's inequality and the definition of norm on $\mc W(\Om')$, it is easy to observe that 
		\begin{align}\label{eq81}
			&\int_{\Om'}\int_{\Om'} \frac{[v(x)+g(x,t)-v(y)-g(y,t)]^{p-1}}{|x-y|^{N+s_1p}}(\phi(x)-\phi(y))dxdy \nonumber\\
			&+ \int_{\Om'}\int_{\Om'} a(x,y) \frac{[v(x)+g(x,t)-v(y)-g(y,t)]^{q-1}}{|x-y|^{N+s_2q}}(\phi(x)-\phi(y))dxdy \nonumber\\
			&\leq C \Big(\|v\|_{W^{s_1,p}(\Om')}^{p-1}+ \|{\rm g}(t)\|_{W^{s_1,p}(\Om')}^{p-1}+ \|v\|_{W^{s_2,q}_{a}(\Om')}^{q-1}+ \|{\rm g}(t)\|_{W^{s_2,q}_{a}(\Om')}^{q-1}\Big) \nonumber\\
			&\qquad\times \|\phi\|_{\mc W(\Om')}.
		\end{align}
		Moreover, proceeding similar to \eqref{eq121}, we can prove that
		\begin{align}\label{eq82}
			&\sum_{(\el,s)}\int_{\Om}\int_{\mb R^N\setminus\Om'}a_\el(x,y) \frac{[v(x)+g(x,t)-g(y,t)]^{\el-1}}{|x-y|^{N+s\el}}\phi(x)dxdy \nonumber\\
			&\leq C\Big(\|v\|_{\mc W(\Om')}^{p-1}+ \|{\rm g}(t)\|_{\mc W(\Om')}^{p-1}+ \|{\rm g}(t)\|_{L^{p-1}_{s_1p}(\mb R^N)}^{p-1} \Big) \|\phi\|_{\mc W(\Om')} \nonumber\\
			&\quad+ C\Big(\|v\|_{\mc W(\Om')}^{q-1}+ \|{\rm g}(t)\|_{\mc W(\Om')}^{q-1}+\|{\rm g}(t)\|_{L^{q-1}_{s_2q,a}(\mb R^N)}^{q-1} \Big) \|\phi\|_{\mc W(\Om')}.
		\end{align}
		Coupling \eqref{eq81} and \eqref{eq82}, we obtain
		\begin{align*}
			|\langle \mc A(v,t),\phi\rangle| &\leq C \Big(\|v\|_{\mc W(\Om')}^{p-1}+ \|{\rm g}(t)\|_{\mc W(\Om')}^{p-1}+ \|{\rm g}(t)\|_{L^{p-1}_{s_1p}(\mb R^N)}^{p-1} \nonumber\\
			&\quad+ \|v\|_{\mc W(\Om')}^{q-1}+ \|{\rm g}(t)\|_{\mc W(\Om')}^{q-1}+\|{\rm g}(t)\|_{L^{q-1}_{s_2q,a}(\mb R^N)}^{q-1} \Big) \|\phi\|_{\mc W(\Om')},
		\end{align*}
		which proves the boundedness of $\mc A$.
		Next, to obtain coercivity of the operator $\mc A$, 
		for any $v\in X_0(\Omega,\Omega')$, using \eqref{eqmon} and Young's inequality, we first see that
		\begin{align*}
			&\int_{\Om'}\int_{\Om'}a(x,y) [v(x)+g(x,t)-v(y)-g(y,t)]^{q-1}(v(x)-v(y))d\mu_2	\nonumber\\
			&=
			\int_{\Omega'}\int_{\Omega'} a(x,y)   \Big([v(x)+g(x,t)-v(y)+g(y,t)]^{q-1}-[g(x,t)-g(y,t)]^{q-1} \Big) \nonumber\\
			&\qquad\quad\times (v(x)-v(y))d\mu_{2} \nonumber\\
			& \ +\int_{\Omega'}\int_{\Omega'}   a(x,y) [g(x,t)-g(y,t)]^{q-1}  (v(x)-v(y))d\mu_{2} \nonumber\\
			&\geq c_q \int_{\Omega'}\int_{\Omega'} a(x,y)|v(x)-v(y)|^q d\mu_2 \nonumber\\
			& \ - c_q \int_{\Omega'}\int_{\Omega'} a(x,y)  |g(x,t)-g(y,t)|^{q-1}  |v(x)-v(y)|d\mu_{2} \nonumber\\
			&\geq c\int_{\Omega'}\int_{\Omega'} |v(x)-v(y)|^q d\mu_2 - C \int_{\Omega'}\int_{\Omega'}   |g(x,t)-g(y,t)|^{q}  d\mu_{2}.
		\end{align*}
		Proceeding similarly, and using the definition of $W$, we observe that 
		\begin{align*}
			&\int_{\Omega}\int_{\mathbb R^N\setminus\Omega'} a(x,y) [v(x)+g(x,t)-g(y,t)]^{q-1}v(x)d\mu_2 \nonumber\\
			&\geq c(q) \int_{\Omega}\int_{\mathbb R^N\setminus\Omega'}  \frac{a(x,y)}{|x-y|^{N+qt}}|v(x)|^{q} dxdy \nonumber\\
			& \ - c(q) \int_{\Omega}\int_{\mathbb R^N\setminus\Omega'} \frac{a(x,y)}{|x-y|^{N+qs_2}} (|g(x,t)|^{q-1}+|g(y,t)|^{q-1})|v(x)|dxdy \nonumber\\
			&\geq c \int_{\Omega'} W(x)|v(x)|^q dx-C\int_{\Omega'}W(x)|g(x,t)|^{q}dx -\epsilon \int_{\Omega'}|v(x)|^pdx \nonumber\\
			&\quad- C T^q_{qs_2,a}(g;z,r)^\frac{p}{p-1},
		\end{align*}
		where we have also used the fact that $v=0$ in $\Omega'\setminus\Omega$ and Young's inequality on the last line. Analogous results hold for the corresponding $p$-counterparts, too. Consequently,  recalling  the definition of the norm on $\mathcal{W}(\Omega')$, we obtain
		\begin{align*}
			\langle  \mathcal A(v),v\rangle &\geq c \min\{ \|v\|_{\mathcal{W}(\Omega')}^{p}, \|v\|_{\mathcal{W}(\Omega')}^{q} \} -C \|{\rm g}(t)\|_{W^{s,p}(\Omega')}^{p}-C\|{\rm g}(t)\|_{W^{t,q}_{b}(\Omega')}^{q} \nonumber\\
			& \ -CT^p_{ps}({\rm g}(t);z,r)- C T^q_{qs_2,a}({\rm g}(t);z,r)^{p/(p-1)},
		\end{align*}
		where the constants $c$ and $C$ are independent of $v$, and hence the operator $\mathcal{A}$ is coercive. 
	\end{proof}
	%
\end{appendix}

\medskip
{\bf Funding:}
The first author is funded by IFCAM (Indo-French Centre for Applied Mathematics) IRL CNRS 3494.

\end{document}